\DeclareMathOperator{\Max}{\underline{Max}}
\DeclareMathOperator{\mult}{mult}
\DeclareMathOperator{\ord}{ord}
\DeclareMathOperator{\Sing}{Sing}
\DeclareMathOperator{\Spec}{Spec}
\newcommand{\Diff}{\mathit{Diff}}
\newcommand{\G}{{\mathcal G}}
\renewcommand{\L}{{\mathcal L}}
\newcommand{\Cont}{\mathrm{Cont}}
\newcommand{\Mm}{\mathrm{\underline{Max}\; mult}_X}
\newcommand{\Gn}{\G^{(n)}}
\newcommand{\Gd}{\G^{(d)}}
\newcommand{\Vn}{V^{(n)}}
\newcommand{\Vd}{V^{(d)}}
\newtheorem{Thm}{Theorem}[section]         
\newtheorem{Lemma}[Thm]{Lemma}
\newtheorem{Prop}[Thm]{Proposition}
\theoremstyle{definition}
\newtheorem{Parrafo}[Thm]{\ }
\newtheorem{Def}[Thm]{Definition}  
\newtheorem{Def-Prop}[Thm]{Definition-Proposition}
\theoremstyle{remark}
\newtheorem{Rem}[Thm]{Remark}
\newtheorem{Ex}[Thm]{Example}
\numberwithin{equation}{Thm}
\newcommand{\Mmp}{\mathrm{\underline{Max}\; mult}_{X'}}
\title{Contact loci and Hironaka's order}
\author{A. Bravo, S. Encinas, B. Pascual-Escudero}
\thanks{The authors were partially supported by MTM2015-68524-P; The first author  was partially  supported from the Spanish Ministry of Economy and Competitiveness, through the ``Severo Ochoa'' Programme for Centres of Excellence in R\&D (SEV-2015-0554)}
\keywords{Rees algebras. Resolution of Singularities. Arc Spaces}
\subjclass[2010]{14E15, 14E18}
\begin{document}

\begin{abstract}
We study contact loci sets of arcs and the behavior of Hironaka's order function defined in constructive Resolution of singularities.
We show that this function can be read in terms of the irreducible components of the contact loci sets at a singular point of an algebraic variety. 
\end{abstract}

\maketitle

\tableofcontents

\section*{Introduction}

Resolution of singularities is a classical subject in algebraic geometry. Given an algebraic variety $X$, defined over a field $k$, the problem is to find a non-singular variety $\tilde{X}$ and a proper and birational morphism $f: \tilde{X} \to X$. The theorem of H. Hironaka (\cite{Hir}) asserts that a resolution of singularities exists when $k$ is a field of characteristic zero. Moreover, the theorem says that $f: \tilde{X} \to X$ can be defined as a composition of a finite number of blow ups at regular centers,     such that it induces an isomorphism on the non-singular locus of $X$, $X\setminus\Sing(X)$. The general problem, for varieties defined over arbitrary fields $k$, remains open, although we know that the answer is afirmative in low dimensions  (see for instance  \cite{Abhy1}, \cite{Abhy2}, \cite{Benito_V}, \cite{Cos_Pilt1,Cos_Pilt2}, \cite{Cut}, \cite{K_M1}, \cite{Lipman3}).  

\ 

The work of J. Nash on the theory of arc spaces was in part motivated by Hironaka's Theorem (cf. \cite{Nash}). A resolution of singularities of an algebraic variety $X$ may not be unique, and  one may wonder how much information about the process of resolution can be read on its space of arcs $\L(X)$.    There is a large number of papers where  arcs and singularities are studied. Just to mention a few see   \cite{Mou4}, \cite{Mus1}, \cite{Re3}, \cite{L-J_Mou_Re}, \cite{deF_Doc}, \cite{I_K}.  
 
\

This paper concerns the   study  of  an invariant    that is used in constructive resolution of singularities  and how it can be read in the space of arcs of a given variety. More precisely, we   explore how this invariant shows  up when considering the so called {\em contact loci with   a singular closed point   $\xi$}, say  $\Cont^{\geq n}(\mathfrak m_{\xi})$, i.e, the set of arcs that   have order at least $n$ at the maximal ideal $\mathfrak m_{\xi}$ of    $\xi$  for   $n\in {\mathbb N}$   (see \cite{E_L_M}, \cite{deF_E_I} and \cite{I08} where the structure of these sets is studied).  
 
\

\noindent {\bf Constructive resolution of singularities and Hironaka's order function}
\medskip

Hironaka's Theorem is existencial. A constructive resolution of singularities consists on describing a procedure to construct, step by step, a sequence of blow ups that leads to the resolution of a given variety $X$, 
\begin{equation}
\label{constructive}
X=X_0\leftarrow X_1 \leftarrow \ldots \leftarrow X_n=\tilde{X}.
\end{equation}
Constructive resolutions are given     in \cite{B-M}, \cite{V1}, and \cite{V2};
see also \cite{Br_E_V}, \cite{E_V97} and \cite{E_Hau}. 
Roughly speaking, to construct a sequence like (\ref{constructive}) one uses the so called {\em resolution functions defined on varieties}. These are upper semi-continuous functions 
 $$ \begin{array}{rrcl} 
 f_X:  &  X &  \to  & (\Lambda, \geq) \\
   & \xi & \mapsto &  f_X(\xi)
   \end{array}$$
 that are constant if and only if the variety is regular and    whose maximum value, $\max f_X$, achieved in a closed  regular   subset $\underline{\text{Max}} f_X$,  selects the  center to blow up. Thus  the sequence (\ref{constructive}) is defined so that 
$$\max f_{X_0}> \max f_{X_1} > \ldots > \max f_{X_n},$$
 where $\max f_{X_i}$ denotes the maximum value of $f_{X_i}$ for $i=0,1,\ldots, n$.  
Usually, $f_X$ is defined at each point as a sequence of rational numbers, the first set of coordinates    being the Hilbert-Samuel function at the point (see \cite{Br_E_V}) or the multiplicity  (see \cite{V}). Suppose that we are in this second case, and use the multiplicity as first coordinate of the resolution function $f_X$. Suppose in addition that $X$ is a variety of dimension $d$.  Then, the second coordinate of $f_X$ is the so called {\em Hironaka's order function in dimension $d$}, $\ord_X^{(d)}$ which is some positive rational number (see section~\ref{Hironaka_qpersistance}). The resolution function at a given point $\xi$ would be something like the following: 
\begin{equation}
\label{resol_coord}
f_X(\xi)=(\text{mult}_{{\mathfrak m}_{\xi}} {\mathcal O}_{X,\xi}, \ord_X^{(d)}(\xi),\ldots), 
\end{equation}
where $\text{mult}_{{\mathfrak m}_{\xi}} {\mathcal O}_{X,\xi}$ denotes   the multiplicity of the local ring $\mathcal{O}_{X,\xi}$ at the maximal ideal $\mathfrak{m}_{\xi}$. The remaining coordinates of $f_X(\xi)$ can be shown to depend on $\ord_X^{(d)}(\xi)$, thus, we usually say that this rational number is the main invariant in constructive resolution. 
\medskip

In \cite{Br_E_P-E} we showed that   $\ord_X^{(d)}(\xi)$,  can be read from the set of arcs  with center $\xi$, $\L(X, {\xi})$. To this end   we worked with the so called {\em Nash multiplicity sequences   of arcs} introduced by M. Lejeune-Jalabert in \cite{L-J} for  the case of a germ of a point of a  hypersurface, and generalized afterwards by H. Hickel in \cite{Hickel05}. For a given point $\xi$ in a variety $X$,  these sequences of numbers are  intrinsic, and only depend on the  set  $\L(X, {\xi})$. 
\medskip

Finally we point out that the  invariant $\ord_X^{(d)}(\xi)$ can also be defined if $k$ is a perfect field of positive characteristic; only, it is too coarse and it does not provide enough information to be able to construct  a  resolution function.     In \cite{BEP2}  we showed that the results in  \cite{Br_E_P-E}  can also be extended to this case, therefore providing a geometrical meaning to  Hironaka's order function in positive characteristic. 
\bigskip

\noindent{\bf Nash multiplicity sequences:  the {\em persistance} and the {\em ${\mathbb Q}$-persistance}}
\medskip

Suppose $X$ is a singular variety of maximum multiplicity $m>1$. Then given a point $\xi\in \Sing(X)$ of multiplicity $m$, and an arc $\varphi \in \L(X, \xi)$,  the {\em sequence of Nash multiplicities of $\varphi$} is a non-increasing sequence of integers, 
\begin{equation}
\label{primera_Nash}
m=m_0\geq m_1\geq m_2 \geq \ldots
\end{equation}
where $m_0=m$ is the multiplicity at the point $\xi$, and the rest of the numbers in the sequence can be interpreted as a {\em refinement of the ordinary multiplicity at $\xi$ along the arc $\varphi$}  (see the discussion in Sections \ref{RAandNash} and \ref{Hironaka_qpersistance}).  
\medskip

Suppose that $\varphi$ is a $K$-arc, with $K\supset k$, which gives a morphism 
$\varphi:{\mathcal O}_{X,\xi}\to K[[t]]$. 
When the generic point of $\varphi$ is not contained in the stratum of multiplicity $m$ of $X$, then there is some subindex $l\ge 1$ in sequence (\ref{primera_Nash})  for which $m_l<m_0$. We will be interested in the first subindex for which the inequality holds and call it {\em the persistance of the arc $\varphi$}, $\rho_{X,\varphi}$. To eliminate the impact of the order of the arc at the point, we will normalize the persistance setting 
\begin{equation}
\label{rho_norm_1}
\overline{\rho}_{X,\varphi}=\frac{\rho_{X,\varphi}}{\nu_t(\varphi)},
\end{equation}
 where $\nu_t(\varphi)$ denotes the order of the image by $\varphi$, of the maximal ideal of $\xi$, i.e., $\varphi({\mathfrak m}_{\xi})$, at the regular local ring $K[[t]]$. We will work simultaneously with another invariant which is a refinement of the persistance:  the {\em ${\mathbb Q}$-persistance}, which we denote  by  $r_{X,\varphi}$,  and its normalized version $\overline{r}_{X,\varphi}$. In fact, the two invariants are related since for a given arc $\varphi$ it can be shown that 
\begin{equation} 
\label{rho_r} 
\rho_{X,\varphi}= \lfloor r_{X,\varphi}\rfloor  \ \text{ and } \    r_{X,\varphi} =\frac{1}{\nu_t(\varphi )}\cdot \lim _{n\rightarrow \infty }\frac{\rho _{X,\varphi _{n}}}{n} \in {\mathbb Q}_{\geq 1},
\end{equation}
where for each $n\geq 1$, $\varphi _n=\varphi \circ i_n$  and $i_n^*:K[[t]]\longrightarrow K[[t^n]]$ maps $t$ to $t^n$. 
\medskip

In what follows we will denote by $\Mm$ the (closed) set of points of maximum multiplicity $m$ of $X$. With this notation, in \cite{Br_E_P-E}, \cite{BEP2} we proved the following theorem: 
\medskip

\begin{Thm}\label{principal} \cite[Theorem 3.6]{Br_E_P-E}, \cite[Theorem 6.1]{BEP2} 
 Let $X$ be a $d$-dimensional   algebraic  variety defined over a perfect field $k$, and let $\xi\in \Mm$. Then 
\begin{equation}\label{desigualdad_i}
\ord_X^{(d)}(\xi)\leq \inf_{\varphi\in {\L(X, {\xi})}}\{\overline{r}_{X,\varphi}\}=\inf_{\varphi\in \L(X,{\xi})}\left\{\frac{1}{\nu_t(\varphi)}\lim_{n\to \infty}\frac{\rho_{X,\varphi_{n}}}{n}\right\}.
\end{equation}
Moreover, the infimum is a minimum, i.e., there is some arc $\eta\in \L(X,{\xi})$ such that:  
\begin{equation}
\label{igualdad_divisorial_i}
\ord_X^{(d)}(\xi)=\overline{r}_{X,\eta}=\frac{1}{\nu_t(\eta)}\lim_{n\to \infty}\frac{\rho_{X,\eta_{n}}}{n}.
\end{equation}
\end{Thm}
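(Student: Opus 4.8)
The plan is to express both quantities in (\ref{desigualdad_i}) as orders of Rees algebras, so that the comparison reduces to an elementary property of orders under restriction to an arc. First, working in an \'etale (or formal) neighbourhood of $\xi$, one embeds $X$ in a smooth $n$-dimensional variety $\Vn$ and fixes a presentation of the multiplicity: a Rees algebra $\Gn$ on $\Vn$ with $\Sing(\Gn)=\Mm$ which is compatible with blow-ups, so that after any sequence of permissible blow-ups the transform of $\Gn$ has singular locus equal to the maximum multiplicity locus of the transformed variety, as long as the maximum value of the multiplicity stays equal to $m$. Passing to an elimination algebra produces a smooth $d$-dimensional $\Vd$, a Rees algebra $\Gd$ on it, and a point $\bar\xi\in\Vd$ over $\xi$, with
\[
\ord_X^{(d)}(\xi)=\ord_{\bar\xi}(\Gd),
\]
the right-hand side being independent of all the choices; moreover this elimination is compatible with the transforms occurring below. (For $k$ perfect of positive characteristic one uses the characteristic-free versions of these constructions, as in \cite{BEP2}.)

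Next, fix $\varphi\in\L(X,\xi)$ and reinterpret its Nash multiplicity sequence $m=m_0\ge m_1\ge\cdots$, which is built by taking $X_0=X\times_k\A^1_k$, the arc $\Psi_0=(\varphi,t):\Spec K[[t]]\to X_0$ through $\xi_0=(\xi,0)$, and then successively blowing up the point $\xi_i$ determined by the lift $\Psi_i$ of the arc and setting $m_i=\mult_{\xi_i}(X_i)$. By the first paragraph the blow-up at $\xi_i$ is permissible for the transform of $\Gn$ --- equivalently, by compatibility of elimination, for the transform of $\Gd$ --- exactly while $m_i=m$; hence $\rho_{X,\varphi}$ is the length of the longest initial, $\Gd$-permissible, segment of this blow-up sequence. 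Running the same construction with the arcs $\varphi_n=\varphi\circ i_n$ and tracking the order of a Rees algebra under point blow-ups yields the key identity
\[
\lim_{n\to\infty}\frac{\rho_{X,\varphi_n}}{n}=\ord_t(\bar\varphi^{*}\Gd),
\]
where $\bar\varphi:\Spec K[[t]]\to\Vd$ is the arc induced by $\varphi$ (the composite of $\varphi$ with the finite projection $X\to\Vd$) and $\ord_t$ denotes the order at the closed point of the resulting Rees algebra on $K[[t]]$. Together with (\ref{rho_r}) and (\ref{desigualdad_i}) this gives $\overline{r}_{X,\varphi}=\ord_t(\bar\varphi^{*}\Gd)/\nu_t(\varphi)$.

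Granting this dictionary, (\ref{desigualdad_i}) follows at once: for every ideal $I$ of the regular local ring $\mathcal{O}_{\Vd,\bar\xi}$ one has $\ord_t(\bar\varphi^{*}I)\ge\ord_{\bar\xi}(I)\cdot\ord_t(\bar\varphi^{*}\m_{\bar\xi})$, so by the definition of the order of a Rees algebra $\ord_t(\bar\varphi^{*}\Gd)\ge\ord_{\bar\xi}(\Gd)\cdot\ord_t(\bar\varphi^{*}\m_{\bar\xi})$; and since $\Vd$ comes from a generic projection that is transversal along $\Mm$, the ideals $\m_{\bar\xi}\mathcal{O}_{X,\xi}$ and $\m_\xi$ have the same integral closure, whence $\ord_t(\bar\varphi^{*}\m_{\bar\xi})=\nu_t(\varphi)$ and therefore $\overline{r}_{X,\varphi}\ge\ord_{\bar\xi}(\Gd)=\ord_X^{(d)}(\xi)$. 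For the reverse inequality I would exhibit an arc attaining the bound. On the smooth $\Vd$ it is standard that $\ord_{\bar\xi}(\Gd)=\inf_\gamma\,\ord_t(\gamma^{*}\Gd)/\ord_t(\gamma^{*}\m_{\bar\xi})$, the infimum running over arcs $\gamma$ centered at $\bar\xi$ and attained by a general such $\gamma$, and this normalized order is unchanged when $\gamma$ is reparametrized. Since $X\to\Vd$ is finite, a general $\gamma$ attaining the infimum lifts --- after possibly enlarging the base field and reparametrizing the source --- to an arc $\eta\in\L(X,\xi)$ whose induced arc $\bar\eta$ on $\Vd$ is $\gamma$ up to reparametrization; hence $\ord_t(\bar\eta^{*}\Gd)/\ord_t(\bar\eta^{*}\m_{\bar\xi})=\ord_{\bar\xi}(\Gd)$, and since $\ord_t(\bar\eta^{*}\m_{\bar\xi})=\nu_t(\eta)$ by the integral closure argument, the formula of the second paragraph gives $\overline{r}_{X,\eta}=\ord_{\bar\xi}(\Gd)=\ord_X^{(d)}(\xi)$, which is (\ref{igualdad_divisorial_i}).

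The hardest steps are two. First, the dictionary of the second paragraph: proving that the arc-theoretic persistance $\rho_{X,\varphi}$ equals the length of the longest $\Gd$-permissible initial blow-up sequence attached to $\varphi$, and that the limit in (\ref{rho_r}) is the order of $\bar\varphi^{*}\Gd$. Second, the lifting step: guaranteeing that a general arc on the eliminated variety $\Vd$ is the trace, up to reparametrization, of an arc of $X$ centered at $\xi$, over a suitable field extension, without altering the relevant orders --- here the finiteness of $\mathcal{O}_{X,\xi}$ over $\mathcal{O}_{\Vd,\bar\xi}$ is what makes the lift possible. The well-definedness of $\ord_X^{(d)}(\xi)$ and the compatibility of elimination with transforms, used above, are standard but substantial inputs from the theory of Rees algebras in resolution of singularities.
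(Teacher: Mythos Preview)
Your sketch is correct and follows essentially the same strategy as the paper (and the original proofs in \cite{Br_E_P-E,BEP2}). The inequality~(\ref{desigualdad_i}) via the elementary estimate $\nu_t(\bar\varphi^{*}I)\ge\nu_{\bar\xi}(I)\cdot\nu_t(\bar\varphi^{*}\m_{\bar\xi})$ on the regular local ring $\mathcal{O}_{\Vd,\bar\xi}$, combined with $\m_{\bar\xi}B$ being a reduction of $\m_{\xi}$, is exactly (\ref{r_formula})--(\ref{desigualdad_etale}); and for (\ref{igualdad_divisorial_i}) your plan --- produce an arc on $\Vd$ realizing $\ord_{\bar\xi}(\Gd)$ and lift it through the finite map $\beta:X'\to\Vd$ --- is the route of \cite{BEP2} summarized here in Remark~\ref{Rem_Diferencia}. (The present paper also offers an alternative for the equality, via normalized blow-ups, in Theorem~\ref{arco_divisorial}, which yields the extra information that $\eta$ can be taken divisorial.)

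One step needs correcting. In your second paragraph you pass from $\Gn$-permissibility to $\Gd$-permissibility by ``compatibility of elimination''. Over a perfect field of positive characteristic the inclusion $\alpha(\Sing(\Gn))\subseteq\Sing(\Gd)$ may be strict (see the paragraph after (\ref{algebra_eliminacion})), so that equivalence is not available in general and your derivation of the key identity $r_{X,\varphi}=\ord_t(\bar\varphi^{*}\Gd)$ breaks there. The paper's route avoids this: one first establishes $r_{X',\varphi'}=\ord_t(\varphi'(\G_{X'}))$ from the presentation $(\Vn,\Gn)$ and Hickel's interpretation of the Nash sequence (this is (\ref{r_escrito})), and only then passes to $\ord_t(\tilde\varphi'(\Gd))$ using that $\beta^{*}(\Gd)\subset\G_{X'}$ is a \emph{finite} extension of Rees algebras ((\ref{ex_finita_alge}) and (\ref{r_formula})). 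That argument is characteristic-free and gives exactly the identity you need.
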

\bigskip
 
\noindent{\bf Results}
\medskip
 
The purpose of this paper is to study the behaviour of the normalized ${\mathbb Q}$-persistance,   $\overline{r}_X$, as a function 
 on $\L(X, {\xi})$.   Observe that,  from the way $\overline{r}_{X}$  is defined, it will not be an upper-semi continuous function. 
 
One may wonder, for instance,  if  equality (\ref{igualdad_divisorial_i}) holds generically at   $\L(X, {\xi})$.
This we do not know, and do not expect it either.   Thus we formulate our question in a slightly different way by selecting suitable closed sets in $\L(X, {\xi})$.  

 Recall  that if   ${\mathfrak a}$ is a sheaf of ideals on $X$, then, for each $n\in {\mathbb Z}_{\geq 1 }$ one can define the closed subset   of $\L(X)$:
$$\text{Cont}^{\geq n}({\mathfrak a}):=\{\varphi\in \L(X): \nu_t(\varphi({\mathfrak a}))\geq n\},$$
and the locally closed set
$$\text{Cont}^{=n}({\mathfrak a}):=\{\varphi\in \L(X): \nu_t(\varphi({\mathfrak a})) = n\}.$$

See Definition \ref{contacto_ideal} below. With this notation, we  show: 

\medskip

\noindent{\bf Proposition \ref{proposicion_abierto}.} 
{\em
Let $X$ be a $d$-dimensional algebraic   variety defined over a perfect field $k$,   and let $\xi\in \Mm$. 
Suppose there is some $s\geq 1$  and an arc $\varphi_0\in \text{Cont}^{=s}({\mathfrak m}_{\xi})$ with $\overline{r}_{X,{\varphi_0}}=\ord_{\xi}^{(d)}(X)$.
Then there is a non-empty open subset ${\mathfrak W}$ of $\text{Cont}^{ \geq s}({\mathfrak m}_{\xi})$,
containing $\varphi_0$, such that for all arcs $\varphi \in {\mathfrak W}$,  $\overline{r}_{X,\varphi}=\ord_{\xi}^{(d)}(X)$. If, in addition,  the generic point of $\varphi_0$   is not contained in $\Sing (X)$  and the characteristic of $k$ is zero, then  there are fat (divisorial) arcs in ${\mathfrak W}$.
}
\medskip

It is natural to ask for which   values of $s$  the previous proposition holds. Observe that, since $\xi\in X$ is a singular point, it may happen that:
$$\L(X, {\xi})=\text{Cont}^{\geq 1}(\mathfrak{m}_{\xi})=\text{Cont}^{\geq 2}(\mathfrak{m}_{\xi})=\ldots=\text{Cont}^{\geq t_0}(\mathfrak{m}_{\xi})\supsetneq \text{Cont}^{\geq t_0+1}(\mathfrak{m}_{\xi})\supseteq\ldots$$
and it would be interesting to know whether the statement is valid    for $s=t_0$, the minimum order of an arct at $\xi$. We do not know how to compute the value $t_0$, but we can find  values for which the proposition holds   by looking at the normalized blow up of $X$ at $\xi$,  $X\longleftarrow\overline{X_1}$. Observe that in this setting, after removing a closed set of codimension at least two in $\overline{X_1}$, we can restrict to an open set $U$ such that we have a log resolution of the maximal ideal of the point,  $\mathfrak{m}_{\xi}$:
\begin{equation}
\label{def_c_i_intro}
\mathfrak{m}_{\xi}\mathcal{O}_{U}=I(H_1)^{c_1}\cdots I(H_{\ell})^{c_{\ell}}
\end{equation}
where the hypersurfaces $H_i$ are irreducible and have only normal crossing in $U$. In fact  the number $c:=\text{min}\{c_1,\ldots, c_{\ell}\}$ is an upper bound for $t_0$. 

\medskip 

\noindent{\bf Proposition \ref{corolario_abierto}.}
{\em
Let $X$ be a $d$-dimensional algebraic   variety defined over a perfect field $k$ and  let $\xi\in \Mm$.
Then for every  $n\geq 1$  and every $c_i$ as in (\ref{def_c_i_intro}), $i=1,\ldots, \ell$,  there is a non-empty open set 
${\mathfrak U}_{nc_i}\subseteq\Cont^{\geq nc_i}(\mathfrak{m}_\xi)$  such that for all
$\varphi \in {\mathfrak U}_{nc_i}$, $\overline{r}_{X,\varphi}=\ord_{X}^{(d)}(\xi)$.
}

\

In particular, for those cases in which $c=t_0$, the statement says that there is a non-empty open set ${\mathfrak U}\subset \L(X, {\xi})$ such that for all arcs in ${\mathfrak U}$ the equality (\ref{igualdad_divisorial_i}) holds. 
\medskip

In \cite{E_L_M} and \cite{deF_E_I} it was shown that  if $X$ is a complex algebraic variety, then for each $n$,   the closed subsets $\text{Cont}^{\geq n}({\mathfrak a})$ have a finite number of (fat) irreducible components and that, moreover, these fat irreducible components are maximal divisorial sets. 
\medskip

Here we study the behaviour of the normalized ${\mathbb Q}$-persistance  on the irreducible fat components of  $\text{Cont}^{\geq n}({\mathfrak m}_{\xi})$. On the one hand we show that for certain values of  $n$,  equality (\ref{igualdad_divisorial_i}) always holds for the generic point of some irreducible fat component: 
\medskip

\noindent{\bf Theorem \ref{componentes}.} {\em 
Let $X$ be a $d$-dimensional  algebraic  variety defined over a perfect field $k$, let $\xi\in \Mm$, and let  $\{T_{\lambda_m}\}_{\lambda_m\in \Lambda_m}$  
be the fat irreducible components of   $\text{Cont}^{\geq m}({\mathfrak m}_{\xi})$, 
with generic points  $\{\Psi_{\lambda_m}\}_{\lambda_m\in \Lambda_m}$ for $m\geq 1$.   If $m=nc_i$ for some $n\geq 1$ and some $c_i$ as in (\ref{def_c_i_intro})  then 
$$\ord_X^{(d)}(\xi) =\min \{\overline{r}_{X,\Psi_{{\lambda}_m}}: \lambda_m\in \Lambda_m\}.$$ 
In addition, if $k=\mathbb{C}$ then equality (\ref{igualdad_divisorial_i}) holds 
at the generic point of a maximal divisorial set.
}
\medskip

In particular, for those cases in which $c=t_0$, the statement says that the equality (\ref{igualdad_divisorial_i}) holds at the generic point of a fat irreducible component of  $\L(X, {\xi})$.

It is quite natural to investigate if the same statement  holds for the fat irreducible components of $\text{Cont}^{\geq n}({\mathfrak m}_{\xi})$ for   arbitrary values of  $n$,   but Example \ref{ejemplo_no} indicates  that this is not the case.  However, it can be proved  that equality (\ref{igualdad_divisorial_i})  holds assymptotically when $n$ is arbitrary large: 
\medskip

\noindent{\bf Theorem \ref{componentes_limite}.} {\em Let $X$ be a $d$-dimensional algebraic variety defined over a perfect field $k$, and   let $\xi\in \Mm$.
For each $m\in {\mathbb N}$, let $\{T_{m,\lambda_m}\}_{\lambda_m\in \Lambda_m}$ be the fat irreducible components of $\Cont^{\geq m}({\mathfrak m}_{\xi})$ and let 
	$\Psi_{m, \lambda_m}$ be the generic point of $T_{m,\lambda_m}$ for    $\lambda_m\in \Lambda_m$.
	For each $m\geq 1$ set:
	\begin{equation*}
	\delta_{m}:=\inf\left\{\bar{r}_{\Psi_{m,\lambda_m}}\mid \lambda_{m} \in \Lambda_m \right\}.
	\end{equation*}
	Then we have that
	\begin{equation*}
	\ord^{(d)}_{X}(\xi)=\lim_{m\to\infty} \delta_m.
	\end{equation*}
}
\medskip

In the last section of this paper we  explore the possible values of the normalized ${\mathbb Q}$-persistance   when $X$ is defined over a field of characteristic zero (resolution of singularities is needed for these results). On the one hand we show that, regarding the study of the values of $\overline{r}_X$,  it suffices to study fat divisorial arcs: 
\medskip

\noindent{\bf Theorem \ref{ThExistDivFat}.}  {\em Let $X$ be a $d$-dimensional algebraic variety defined over a field of characteristic zero.
Fix a point $\xi\in \Mm$ and let
$\varphi \in \mathcal{L}(X, {\xi})$ be an arc such that $\varphi\not\in\mathcal{L}(\Sing(X))$.
Then there exists a divisorial fat arc $\psi\in\L(X, {\xi})$ such that 
	\begin{itemize}
		\item  $\varphi\in \overline{\{\psi\}}$   and 
		\item $\bar{r}_{X,\varphi}=\bar{r}_{X,\psi}$.
	\end{itemize}}
\medskip

Finally in  \cite{P-E2}   it is proven that  $\xi$ is an isolated point of $\Mm$   if and only if the set $\left\{\overline{r}_{X,\varphi}\right\}_{\varphi\in \L(X, {\xi})}$ has an upper bound.  Here  we give more accurate bounds for   $\overline{r}_X$ on $\L(X, {\xi})$ in the isolated case. To state this result, we use the fact that it is possible to associate a (canonical) Rees algebra with
the set $\Mm$, say $\G_{X'}$,  in an (\'etale) neighborhood of $\xi$, say $X'\to X$ (see \ref{setting}).
\medskip

\noindent{\bf Theorem  \ref{Values}.} {\em Let $X$ be a $d$-dimensional algebraic variety defined over a field of characteristic zero $k$ and let $\xi\in \Mm$. Let $\mu: X'\to X$ be an \'etale morphism   with $\mu(\xi')=\xi$ where $\G_{X'}$ is defined, and 
assume that,  up to integral closure, $\mathcal{G}_{X'}={\mathcal O}_{X'}[IW^b]$ (see (\ref{salvo_entera})).
Let $\Pi:Y\to X'$ be a simultaneous log-resolution of the ideals $I$ and $\mathfrak{m}_{\xi'}$.
Denote by  $H_1,\ldots,H_N$ the irreducible components of the exceptional locus,
\begin{equation} 
	I\mathcal{O}_{Y}=I(H_1)^{a_1}\cdots I(H_N)^{a_N},
	\qquad
	\mathfrak{m}_{\xi'}\mathcal{O}_{Y}=I(H_1)^{c_1}\cdots I(H_N)^{c_N}. 
\end{equation} 
Set $\Lambda=\left\{i\in \left\{1,\ldots,N\right\}\mid a_i\neq 0\right\}$.
Then, for any arc $\varphi$ in $\L(X, {\xi})$ with $\varphi\not\in\L(\Sing(X))$, 
\begin{equation*}
	\frac{1}{b}\min_{i\in\Lambda}\frac{a_i}{c_i}\leq \bar{r}_{X,\varphi}\leq \frac{1}{b}\max_{i\in\Lambda}\frac{a_i}{c_i}, 
\end{equation*}
where we use the convention that $\frac{a_i}{c_i}=\infty$ whenever $c_i=0$ and $a_i\neq 0$.   Moreover, 
\begin{equation*}
	\frac{1}{b}\min_{i\in\Lambda}\frac{a_i}{c_i}=\inf\left\{ \bar{r}_{X,\varphi} \mid \varphi\in\L(X, {\xi}) \right\}
	\quad\text{and}\quad
	\frac{1}{b}\max_{i\in\Lambda}\frac{a_i}{c_i}=\sup\left\{ \bar{r}_{X,\varphi} \mid \varphi\in\L(X, {\xi}) \right\}.
\end{equation*}
}

\medskip

\noindent{\bf On the organization of the paper} 
\medskip

The paper is organized as follows. 
Section \ref{Jet_Arcs} introduces concepts and definitions about arc spaces, contact loci sets and divisorial sets. 
Nash multiplicity sequences and the persistance are defined in section~\ref{RAandNash}.
Section~\ref{Rees_Algebras} is devoted to Rees Algebras, here we define the natural order function of a Rees Algebra.
Section~\ref{Presentaciones_locales} introduces local presentations, which allow to express  the maximum stratum of the multiplicity function in terms of a Rees Algebra.
Hironaka's order function is presented in section~\ref{Hironaka_qpersistance}.

Results are   stated and proved  in sections~\ref{genericos}, \ref{irreducible_Hironaka} and \ref{valores}.
\medskip

{\em Acknowledgements.} We profited from conversations with C. Abad, A. Benito and O. E. Villamayor.
We would like to thank S. Ishii and T. Yasuda for useful comments.
Finally, we also want to thank the referee for careful reading and useful suggestions.


\section{Arcs,  valuations and contac loci} \label{Jet_Arcs}

\begin{Def}
	Let $Z$ be a  scheme over a field $k$, and let $K\supset k$ be a field extension.
	An {\em m-jet  in   $Z$} is a morphism $\vartheta: \text{Spec}\left(K[[t]]/\langle t^{m+1}\rangle\right)\to Z$ for some  $m\in {\mathbb N}$. 
\end{Def}
If ${\mathcal S}ch/k$ denotes the category of $k$-schemes and ${\mathcal S}et$ the category of sets, then the contravariant functor:
$$\begin{array}{rcl} 
{\mathcal S}ch/k & \longrightarrow & {\mathcal S}et\\
Y  & \mapsto & \text{Hom}_k(Y\times_{\text{Spec}(k)}\text{Spec}(k[[t]]/\langle t^{m+1}\rangle), Z)
\end{array}
$$
is representable by a $k$-scheme  ${\mathcal L}_m(Z)$, the {\em space of $m$-jets} over $Z$. If $Z$ is of finite type over $k$, then so is  ${\mathcal L}_m(Z)$ (see \cite{Vojta}). For each pair $m\geq m'$ there is the (natural) truncation  map ${\mathcal L}_{m}(Z) \to {\mathcal L}_{m'}(Z)$. In particular, for $m'=0$, ${\mathcal L}_{m'}(Z)=Z$ and we will denote by ${\mathcal L}_m(Z, \xi)$  the fiber of the (natural) truncation map over a point $\xi\in Z$. Finally, if $Z$ is smooth over $k$ then ${\mathcal L}_m(Z)$ is also smooth over $k$ (see \cite{I2}).

By taking the inverse limit of the ${\mathcal L}_m(Z)$,  the {\em arc space of $Z$} is defined,  
$${\mathcal L}(Z):=\lim_{\leftarrow}{\mathcal L}_m(Z).$$ This is the scheme representing the functor (see \cite{Bhatt}): 
$$\begin{array}{rcl} 
{\mathcal S}ch/k & \longrightarrow & {\mathcal S}et\\
Y  & \mapsto & \text{Hom}_k(Y\tilde{\times}\text{Spf}(k[[t]]), Z).
\end{array}
$$

A $K$-point in $\L(Z)$ is an  {\em arc of $Z$} and can be seen as   a morphism $\varphi: \text{Spec}(K[[t]])\to Z$ for some $K\supset k$.  The  image by $\varphi$ of the closed point   is called the {\em center of the arc $\varphi$}.  If the center of $\varphi$ is $\xi\in Z$ then it induces a $k$-homomorphism  
${\mathcal O}_{Z,\xi}\to K[[t]]$ which we will denote by $\varphi$ too; in this case the image by $\varphi$ of the maximal ideal,  $\varphi({\mathfrak m}_{\xi})$,  generates an ideal $\langle t^m\rangle\subset K[[t]]$ and  then we will say that {\em the order of $\varphi$ is $m$} and we will denote it by $\nu_t(\varphi)$.   We will denote  by $\L(Z, {\xi})$ the set of arcs in  $\L(Z)$ with center $\xi$. The {\em generic point of $\varphi$ in $Z$} is the point in $Z$ determined by the  $\text{kernel}$ of $\varphi$.

\begin{Def}
	An arc 	$\varphi: \text{Spec}(K[[t]])\to Z$ is {\em thin} if it factors through a  proper closed subscheme of $Z$. Otherwise we say that $\varphi$ is {\em fat}. An irreducible closed subset $C\subset {\mathcal L}(Z)$ is said to be a {\em fat closed subset} if its generic point is a fat arc. Otherwise $C$ is said to be {\em thin}. 
\end{Def}
\bigskip

\noindent{\bf Divisorial arcs and maximal divisorial sets}
\medskip

\noindent In the following lines we will assume that $X$ is an (irreducible) algebraic variety defined over a field $k$ and will denote by $K(X)$ its quotient field.
\medskip

Observe that any fat arc  $\varphi: \text{Spec}(K[[t]])\to X$  defines a discrete valuation  on   $X$. This is the {\em valuation corresponding to $\varphi$}, $v_{\varphi}$. If $\varphi$ is thin, then it defines a valuation in the quotient field $K(Y)$ of some  (irreducible) subvariety $Y\subset X$. On the other hand, note that for any discrete valuation $v$ of  $K(X)$  one can  define a (non-necessarily unique) arc $\varphi: \Spec{K}[[t]] \to X$, for a  suitable field  $K\supset k$,  whose corresponding valuation is $v$. 

\begin{Def}  We say that a divisor $D$  is a  {\em divisor over $X$} if there is a proper and birational morphism from a normal variety, $X'\to X$, so that $D$ is a divisor on $X'$.  We say that a fat arc    $\varphi\in {\mathcal L}(X)$ is {\em divisorial} if  the (discrete) valuation defined by $\varphi$, $v_{\varphi}$, is a multiple of the valuation defined by some divisor over $X$, i.e., if there is some $q\in {\mathbb N}$ and some divisor $D$ over $X$ such that $v_{\varphi}=q\text{val}_D$. 
\end{Def}
The divisorial fat arcs of a variety $X$ form a   subset of  the fat arcs defined on it. We refer to \cite{I_Crelle} for discussions and examples regarding  this matter. 

\begin{Def}   An irreducible closed subset $C\subset {\mathcal L}(X)$ is said to be a {\em divisorial closed subset} if its generic point is a divisorial (fat) arc. 
\end{Def}

\begin{Def}\label{def_maximal_divisorial}\cite[Definition 2.8]{I08} 
Given a divisorial valuation $v$ over a variety $X$, the {\em maximal divisorial set corresponding to $v$} is defined as: $$C_X(v):=\overline{\{\varphi \in  {\mathcal L}(X): v_{\varphi}=v \}},$$ where $\overline{\{ \ \}}$ denotes the Zariski closure in  ${\mathcal L}(X)$. 
\end{Def}
\bigskip

\pagebreak

\noindent{\bf Contact loci and (maximal) divisorial sets}
\medskip

\begin{Def}\label{contacto_ideal} (\cite{E_L_M},\cite{I08}) Let ${\mathfrak a}$ be a sheaf of ideals on $X$. Then  one can define: 
	\begin{equation}\label{contm}
 	\text{Cont}^{m}({\mathfrak a}):=\{\varphi\in {\L}(X): \nu_t(\varphi({\mathfrak a}))=m\}
	\end{equation}
and  
\begin{equation} \label{contmm}
\text{Cont}^{\geq m}({\mathfrak a}):=\{\varphi\in {\L}(X): \nu_t(\varphi ({\mathfrak a}))\geq m\},
\end{equation}
where, if $\varphi: \text{Spec}(K[[t]])\to X$, and if $U\subset X$ is an affine open set containing the center of $\varphi$, then  $\nu_t(\varphi({\mathfrak a}))$ is defined as the usual order at $K[[t]]$ of the ideal $\varphi(\Gamma(U,{\mathfrak a}))$.  For $m\in {\mathbb N}$, the subsets $\text{Cont}^{\geq m}({\mathfrak a})$ are closed and the  $\text{Cont}^{m}({\mathfrak a})$ are locally closed in $ {\L}(X)$.  If $Y\subset X$ is a closed subscheme of $X$ defined by a sheaf of ideals ${\mathfrak a}$ then one can also define 
$$\text{Cont}^{m}(Y):=\text{Cont}^{m}({\mathfrak a}), \  \ \text{ and } \ \ \text{Cont}^{\geq m}(Y):=\text{Cont}^{\geq m}({\mathfrak a}).$$
\end{Def} 

In the following paragraphs we recall some results from \cite{E_L_M}, \cite{deF_E_I} and \cite{I08} regarding the expression of the subsets (\ref{contm})  and (\ref{contmm}) in terms of irreducible components in the space of arcs of $X$ and their  connection    with the notion of maximal divisorial sets from Definition \ref{def_maximal_divisorial}. 
\medskip

Suppose   now that  $X$ is  a smooth complex variety and let $E=\sum_{i=1}^tE_i$ be a simple normal crossing divisor on $X$. Given a multi-index $\nu=(\nu_i)\in {\mathbb N}^t$, define the support of $\nu$ to be
$$\text{supp}:=\{i\in [1,t]: \nu_i\neq 0\}$$
and 
$$E_{\nu}=\cap_{i\in \text{supp}(\nu)}E_i.$$
Then $E_{\nu}$ is either empty or a smooth subvariety of $X$.   Assume that $E_\nu$ is connected. For a multi-index $\nu\in {\mathbb N}^t$ and an integer $m\geq \text{max}_i \{\nu_i\}$, consider the {\em multi-contact} loci: 
\begin{equation}
\label{Multi_Contact}
\text{Cont}^{\nu}(E)_m=\{\sigma\in {\mathcal L}_{m}(X): \nu_t(\sigma(E_i))=\nu_i, 1\leq i\leq t\},
\end{equation}
and the corresponding subset $\text{Cont}^{\nu}(E)\subset {\mathcal L}(X)$. Provided that $E_{\nu}\neq \emptyset$ it can be checked that  $\text{Cont}^{\nu}(E)_m$ is a smooth irreducible locally closed subset of ${\mathcal L}_{m}(X)$ (see \cite[\S 2]{E_L_M}). Furthermore, $\text{Cont}^{\nu}(E)$  it is a maximal divisorial set (see \cite[Corollary 2.6]{E_L_M} and \cite[Proposition 2.12]{deF_E_I}).  The following theorem asserts that the fat irreducible components of $ \text{Cont}^{m}({\mathfrak a})$ for a given sheaf of ideals in $X$ can be computed via a log-resolution of the ideal:

\begin{Thm}\cite[Theorem 2.1]{E_L_M} \label{ThUnionCont}
Let $X$ be a smooth complex  variety and let ${\mathfrak a}\subset {\mathcal O}_X$ defining a subscheme $Z\subset X$. Let $\Pi: Y\to X$ be a log resolution of $X$ with $E=\sum_{i=1}^tH_i$ a simple normal crossing divisor on $Z$ with 
	\begin{equation}
	\label{componentes_log}
	{\mathfrak a}{\mathcal O}_{Y}={\mathcal O}_{Y}\left(-\sum _{i=1}^tr_iH_i\right).
	\end{equation}
	Then for every positive integer $p$, we have a disjoint union 
	\begin{equation} \label{UnionCont}
	\bigsqcup_{\nu}\Pi_{\infty}(\Cont^{\nu}(E))	\subset \Cont^{p}(Z),
	\end{equation}
where the union is taken over those $\nu\in {\mathbb N}^t$ such that $\sum_{i}\nu_ir_i=p$, and the complement in  $\Cont^{ p}(Z)$ of the above union is thin. 
\end{Thm}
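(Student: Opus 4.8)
The plan is to reduce the statement to a purely combinatorial count on the log resolution $\Pi\colon Y\to X$, using the fact that on a smooth variety arcs can be tracked through their lifts. The key observation is the valuative criterion: if $\varphi\colon\Spec K[[t]]\to X$ is an arc whose generic point does not lie on $Z$, then $\varphi$ lifts uniquely to an arc $\tilde\varphi\colon\Spec K[[t]]\to Y$ (because $\Pi$ is proper and an isomorphism over $X\setminus Z$, and $K[[t]]$ is a DVR). For such an arc, write $\nu_i:=\nu_t(\tilde\varphi(H_i))$; this makes sense since the $H_i$ are Cartier divisors on $Y$. Then $\nu_t(\varphi(\mathfrak a))=\nu_t(\tilde\varphi(\mathfrak a\mathcal O_Y))=\sum_i\nu_i r_i$ by (\ref{componentes_log}), so the lift of such a $\varphi$ lands in exactly one $\Cont^{\nu}(E)$ with $\sum_i\nu_i r_i=p$, and conversely every arc on $Y$ meeting the $H_i$ with multiplicities $\nu$ pushes forward into $\Cont^p(Z)$. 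This gives the disjoint union (\ref{UnionCont}): disjointness is immediate since the $\nu_i$ are intrinsic to $\tilde\varphi$, and the union being taken over $\sum_i\nu_i r_i=p$ is forced by the displayed identity.

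Next I would identify the complement. An arc in $\Cont^p(Z)$ not lying in the union is, by the above, precisely one whose generic point lies on $Z$ — equivalently, an arc that factors through $Z$ (as a reduced scheme, or more precisely through $\Pi^{-1}(Z)_{\mathrm{red}}=E$ after lifting, in the sense that the generic point of the lift lies on some $H_i$). So the complement is contained in $\Pi_\infty(\mathcal L(E))$, or more carefully in the set of arcs $\varphi$ with $\varphi\in\mathcal L(X)$ whose generic point is in $Z$, together with possibly some arcs that do not lift at all. To see this complement is thin, I would argue that it is contained in a finite union of arc spaces $\mathcal L(W)$ for proper closed subsets $W\subsetneq X$: concretely, the closure of this set in $\mathcal L(X)$ is covered by the images $\Pi_\infty(\mathcal L(H_i))$ together with $\mathcal L(\Sing(X))$ (if $X$ is only assumed normal; here $X$ is smooth so this term is absent), and each $\Pi_\infty(\mathcal L(H_i))$ is contained in $\mathcal L(\Pi(H_i))$ with $\Pi(H_i)\subsetneq X$ a proper closed subset, hence thin. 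A cleaner way is to note $\mathcal L(X)\setminus\bigcup_i\Pi_\infty(\Cont^{\nu}(E))$ (union over all $\nu$, not just those summing to $p$) consists of arcs whose lift has generic point on $E$, i.e.\ arcs in $\Pi_\infty(\mathcal L(E))\subseteq\bigcup_i\mathcal L(\Pi(H_i))$; intersecting with the constructible set $\Cont^p(Z)$ preserves thinness.

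The main obstacle I expect is the careful bookkeeping around arcs whose generic point lies on $Z$: such arcs either fail to lift or lift in a non-unique or degenerate way, and one must check that the entire "bad" locus inside $\Cont^p(Z)$ is genuinely thin rather than merely a proper subset. The honest approach is to observe that $\Cont^p(Z)=\Cont^{\geq p}(Z)\setminus\Cont^{\geq p+1}(Z)$ is a constructible subset of $\mathcal L(X)$, that its fat part corresponds (via the irreducibility of each $\Cont^{\nu}(E)_m$ on the smooth model $Y$, cited in the excerpt from \cite[\S2]{E_L_M}) to the finitely many $\nu$ with $\sum\nu_i r_i=p$, and that anything left over is supported on the thin set $\mathcal L\bigl(\bigcup_i\Pi(H_i)\cup(X\setminus\Pi(Y))\bigr)$ — but since $\Pi$ is surjective the last term is empty and $\bigcup_i\Pi(H_i)$ is a proper closed subset of $X$ because $E$ does not dominate $X$. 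This is essentially the content of the original argument in \cite{E_L_M}; I would present it by first establishing the lifting correspondence, then deducing the disjoint union, then bounding the complement, citing the smoothness and irreducibility of the multi-contact strata on $Y$ where needed.
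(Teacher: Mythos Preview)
The paper does not give its own proof of this theorem: it is quoted verbatim from \cite[Theorem~2.1]{E_L_M} and used as a black box. So there is nothing in the present paper to compare against. Your sketch is, in outline, the argument of Ein--Lazarsfeld--Musta\c{t}\u{a}: lift arcs through the log resolution via the valuative criterion, read off the contact orders with the $H_i$, and observe that the leftover arcs form a thin set.

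One point in your write-up is genuinely off and worth fixing. You say that an arc in $\Cont^{p}(Z)$ not lying in the union is ``precisely one whose generic point lies on $Z$''. This cannot happen: for finite $p$ the condition $\nu_t(\varphi(\mathfrak a))=p<\infty$ forces $\varphi(\mathfrak a)\neq 0$, so the generic point of $\varphi$ never lies in $Z$. Every arc in $\Cont^{p}(Z)$ therefore lifts to $Y$. The actual obstruction is that the \emph{lift} $\tilde\varphi$ may have its generic point on some component $H_i$ (in particular one with $r_i=0$, since $E$ includes all exceptional components of $\Pi$, not only those supporting $\mathfrak a\mathcal O_Y$); then some $\nu_i$ is infinite and $\tilde\varphi$ does not belong to any $\Cont^{\nu}(E)$ with $\nu\in\mathbb N^{t}$. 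You do arrive at this in your next sentence, but the framing should be corrected. With that adjustment your thinness argument is the right one: such arcs lie in $\bigcup_i \Pi_\infty(\mathcal L(H_i))\subset\bigcup_i\mathcal L(\Pi(H_i))$, and each $\Pi(H_i)$ is a proper closed subvariety of $X$ because a divisor on $Y$ cannot dominate $X$ under a birational map.
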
	

The next results  generalize the previous theorem to the context of non-necessarily smooth complex varieties, and moreover   relate the expressions in Theorem \ref{ThUnionCont} to the  maximal divisorial sets of valuations that dominate the sheaf of ideals ${\mathfrak a}$: 
 
\begin{Prop}\cite[Proposition 3.4]{I08}
	Let $v=q\cdot\text{val}_D$ be a divisorial valuation over a (non-necessarity smooth) variety $X$. Let $\Pi: Y\to X$ be a resolution of singularities of $X$ such that the irreducible divisor $D$ appears on $Y$. Then,
	$$C_X(v)=\overline{\Pi_{\infty}(\text{Cont}^{\ q}(D))}.$$ 
	In particular, $C_X(v)$ is irreducible.
\end{Prop}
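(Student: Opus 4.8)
\textit{The plan} is to transfer the assertion to the smooth variety $Y$, where the analogous statement is available, by means of the valuative criterion of properness, and then to push everything forward along $\Pi_\infty$. Via the birational $\Pi$ we identify $K(Y)=K(X)$, and thereby view $v=q\cdot\text{val}_D$ both as a valuation over $X$ and over $Y$. Put
$$S_X(v)=\{\varphi\in\L(X):v_\varphi=v\},\qquad S_Y(v)=\{\psi\in\L(Y):v_\psi=v\},$$
so that $C_X(v)=\overline{S_X(v)}$ and $C_Y(v)=\overline{S_Y(v)}$ by definition.

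\textit{Step 1: $\Pi_\infty$ matches the arcs of valuation $v$.} Since $v$ is a nontrivial valuation of $K(X)$, every $\varphi\in S_X(v)$ is dominant, so its generic point is the generic point of $X$; hence $\varphi$ restricted to $\Spec K((t))$ factors through the dense open $U\subseteq X$ on which $\Pi$ is an isomorphism, and composing with $\Pi^{-1}$ gives $\Spec K((t))\to Y$. As $K[[t]]$ is a discrete valuation ring and $\Pi$ is proper, the valuative criterion produces a unique arc $\widetilde\varphi\in\L(Y)$ with $\Pi\circ\widetilde\varphi=\varphi$; since $\widetilde\varphi$ and $\varphi$ agree over $\Spec K((t))$, and valuations are read off there, $v_{\widetilde\varphi}=v$. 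Conversely, if $\psi\in S_Y(v)$ then $\Pi\circ\psi$ is dominant with associated valuation $v$. Thus $\Pi_\infty$ restricts to a bijection $S_Y(v)\to S_X(v)$; in particular $S_X(v)=\Pi_\infty(S_Y(v))$.

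\textit{Step 2: $S_Y(v)$ and $\Cont^{q}(D)$ have the same closure.} Because $D$ is a prime divisor on the smooth variety $Y$, the local ring $\mathcal{O}_{Y,\eta_D}$ at the generic point $\eta_D$ of $D$ is a discrete valuation ring and a local equation $\pi$ of $D$ is a uniformizer. Hence any $\psi$ with $v_\psi=q\cdot\text{val}_D$ has center $\eta_D$ and satisfies $\nu_t(\psi(\pi))=q\,\text{val}_D(\pi)=q$, i.e. $\nu_t(\psi(I(D)))=q$; so $S_Y(v)\subseteq\Cont^{q}(D)$, and therefore $C_Y(v)=\overline{S_Y(v)}\subseteq\overline{\Cont^{q}(D)}$. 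For the reverse inclusion we invoke the smooth theory recalled above (see \cite[Corollary 2.6]{E_L_M}, \cite[Proposition 2.12]{deF_E_I}, and the discussion around (\ref{Multi_Contact})): $\Cont^{q}(D)$ is an irreducible maximal divisorial set, with $\overline{\Cont^{q}(D)}=C_Y(q\cdot\text{val}_D)=C_Y(v)$. (If $D$ happens to be singular on $Y$, a preliminary blow-up of $Y$ centered away from $\eta_D$ makes it smooth and changes neither $C_X(v)$ nor $\overline{\Cont^{q}(D)}$, so there is no loss of generality.) In summary,
$$S_Y(v)\ \subseteq\ \Cont^{q}(D)\ \subseteq\ \overline{S_Y(v)}.$$

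\textit{Conclusion and main difficulty.} Since $\Pi_\infty$ is continuous, applying it to the last display and taking closures gives $\overline{\Pi_\infty(S_Y(v))}=\overline{\Pi_\infty(\Cont^{q}(D))}$. Combined with Step 1 this yields
$$C_X(v)=\overline{S_X(v)}=\overline{\Pi_\infty(S_Y(v))}=\overline{\Pi_\infty(\Cont^{q}(D))},$$
which is the asserted formula; and $C_X(v)$ is irreducible because $\Cont^{q}(D)$ is irreducible in $\L(Y)$, its image under the continuous map $\Pi_\infty$ is then irreducible, and so is its closure. I expect the subtle point to be Step 1: one has to check carefully that the lift furnished by the valuative criterion really is a point of the arc scheme $\L(Y)$ with $\Pi_\infty(\widetilde\varphi)=\varphi$ and unchanged associated valuation, and that $\varphi\mapsto\widetilde\varphi$ is a genuine bijection of the valuation-$v$ arcs, so that nothing is lost when passing to Zariski closures on both sides. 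The remaining ingredients—the structure of $\Cont^{q}(D)$ on the smooth model $Y$, and the continuity of $\Pi_\infty$—are quoted from the cited references or formal.
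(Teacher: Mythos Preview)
The paper does not give its own proof of this proposition: it is quoted verbatim from \cite[Proposition 3.4]{I08} as background material, with no argument supplied. So there is no proof in the present paper to compare your attempt against.

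That said, your approach is the natural one and is essentially how the result is proved in \cite{I08}: lift arcs from $X$ to the smooth model $Y$ via the valuative criterion of properness (your Step~1), identify $C_Y(v)$ with $\overline{\Cont^q(D)}$ on the smooth side (your Step~2), and push forward along $\Pi_\infty$. Step~1 is correct as written; the point you flag as ``subtle'' is routine once one observes that the valuation $v_\varphi$ depends only on the restriction of the arc to $\Spec K((t))$, which is unchanged under the lift.

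The only place that deserves more care is your parenthetical in Step~2 about a ``preliminary blow-up'' when $D$ is singular. As stated it does not quite make sense: $\overline{\Cont^q(D)}$ lives in $\L(Y)$, while after a further blow-up $\sigma:Y'\to Y$ you would be looking at $\overline{\Cont^q(D')}\subset\L(Y')$, a different space; and $\sigma_\infty(\Cont^q(D'))$ is not contained in $\Cont^q(D)$ in general (arcs meeting the exceptional locus pick up extra order along $D$). A cleaner way to close this is to argue directly on $Y$: since $D$ is a prime divisor on the smooth variety $Y$, the open set $\Cont^q(D)\cap\L(Y\setminus\Sing D)$ is irreducible (here $D$ is smooth) with generic point giving $q\cdot\mathrm{val}_D$, and one checks it is dense in $\Cont^q(D)$ because arcs centered in the codimension~$\geq 2$ locus $\Sing D$ form a set of strictly larger codimension in $\L(Y)$ than $\Cont^q(D)$ itself. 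With this adjustment your proof goes through.
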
	

\begin{Prop}\cite[Proposition 2.12]{deF_E_I} \label{maximal_divisorial} Let $X=\text{Spec}(A)$ be a (non necessarily smooth) affine complex variety and let ${\mathfrak a}$ be a non-zero sheaf of ideals. Then for any $m\in {\mathbb N}$   the number of fat irreducible components of $\text{Cont}^{\geq m}({\mathfrak a})$ is finite, and every fat irreducible component  is a maximal divisorial set. 
\end{Prop}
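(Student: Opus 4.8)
The plan is to reduce the statement to the smooth case, where Theorem~\ref{ThUnionCont} is available, and then to extract finiteness from the fact that the cylinders $\Cont^{\nu}(E)$ are irreducible and their closures are preordered by the componentwise order on $\nu$. To begin, I would fix a log resolution $\Pi\colon Y\to X$ of the pair $(X,\mathfrak a)$ which is an isomorphism over $X\setminus V(\mathfrak a)$, with $Y$ smooth, $E=\sum_{i=1}^{t}H_i$ a simple normal crossing divisor, and $\mathfrak a\mathcal O_Y=\mathcal O_Y(-\sum_{i=1}^t r_iH_i)$; such a $\Pi$ exists by Hironaka's theorem. Write $Z\subseteq V(\mathfrak a)$ for the center of $\Pi$ and $\mathrm{Exc}=\Pi^{-1}(Z)$.

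\textbf{Reduction to $Y$.} Since $\mathfrak a\neq 0$ and $X$ is a variety, $V(\mathfrak a)$ is a proper closed subscheme, so every fat arc of $X$ has its generic point outside $Z$ and hence lifts uniquely to $Y$ by the valuative criterion of properness; in fact $\Pi_\infty$ identifies $\mathcal L(Y)\setminus\mathcal L(\mathrm{Exc})$ with $\mathcal L(X)\setminus\mathcal L(Z)$, and $\nu_t(\varphi(\mathfrak a\mathcal O_Y))=\nu_t((\Pi_\infty\varphi)(\mathfrak a))$. As $Z\subsetneq X$ and $\mathrm{Exc}\subsetneq Y$, the sets $\mathcal L(Z)$ and $\mathcal L(\mathrm{Exc})$ are thin, and moreover $\mathcal L(Z)\subseteq\mathcal L(V(\mathfrak a))\subseteq\Cont^{\geq m}(\mathfrak a)$. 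From these facts I would conclude that $C'\mapsto\overline{\Pi_\infty(C')}$ sets up a bijection between the fat irreducible components of $W:=\Pi_\infty^{-1}\big(\Cont^{\geq m}(\mathfrak a)\big)=\Cont^{\geq m}(\mathfrak a\mathcal O_Y)$ and those of $\Cont^{\geq m}(\mathfrak a)$, compatibly with being divisorial. It therefore suffices to treat $W$ on the smooth $Y$.

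\textbf{The monomial case and identification.} For an arc $\varphi$ of $Y$ not contained in any $H_i$ one has $\nu_t(\varphi(\mathfrak a\mathcal O_Y))=\sum_i r_i\,\nu_t(\varphi(H_i))$, so
\[
W=\Big(\bigcup_{\nu\in\mathbb N^{t},\ \sum_i r_i\nu_i\geq m}\Cont^{\nu}(E)\Big)\ \cup\ \big(\text{arcs contained in some }H_i\big),
\]
and the second set is thin. By \cite[\S 2]{E_L_M}, for each $\nu$ with $E_\nu\neq\emptyset$ the cylinder $\Cont^{\geq\nu}(E):=\bigcap_i\Cont^{\geq\nu_i}(H_i)$ is irreducible, closed, and equal to $\overline{\Cont^{\nu}(E)}$; consequently $\mu\leq\nu$ componentwise forces $\overline{\Cont^{\nu}(E)}=\Cont^{\geq\nu}(E)\subseteq\Cont^{\geq\mu}(E)=\overline{\Cont^{\mu}(E)}$. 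Applying this with $\mu=\lceil m/r_i\rceil\, e_i$ (the multi-index that is $\lceil m/r_i\rceil$ at $i$ and $0$ elsewhere), for an index $i\in\mathrm{supp}(\nu)$ with $r_i\neq 0$ (one exists since $\sum_i r_i\nu_i\geq m>0$), I get that the closure of every cylinder occurring above is contained in one of the finitely many cylinders $\Cont^{\geq\lceil m/r_i\rceil e_i}(E)$, $i\in\{1,\dots,t\}$ with $r_i\neq 0$. Hence $W$ is a finite union of irreducible closed sets together with a thin set, so it has finitely many fat irreducible components, each of the form $\Cont^{\geq k e_i}(E)=\overline{\Cont^{k e_i}(E)}$ with $k=\lceil m/r_i\rceil$. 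Since the generic point of $\Cont^{k e_i}(E)$ is the arc with associated valuation $k\,\text{val}_{H_i}$, a divisorial valuation over $X$, the Proposition \cite[Proposition 3.4]{I08} quoted above gives $\overline{\Pi_\infty(\Cont^{k e_i}(E))}=C_X(k\,\text{val}_{H_i})$, a maximal divisorial set. Combining with the reduction step proves both the finiteness and the divisorial description.

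\textbf{Main obstacle.} I expect the genuine difficulties to be topological rather than computational: $\mathcal L(X)$ is not Noetherian, so one must justify carefully that $\Pi_\infty$ matches up the \emph{fat} irreducible components exactly --- using that it is an isomorphism away from the thin sets $\mathcal L(Z)$, $\mathcal L(\mathrm{Exc})$, and that forming closures commutes appropriately with the continuous map $\Pi_\infty$ on the dense fat part --- and one must invoke from \cite{E_L_M} (and, for the non-smooth identification, \cite{deF_E_I}, \cite{I08}) that the cylinders $\Cont^{\geq\nu}(E)$ are irreducible \emph{in the arc space} and have closures preordered by $\nu$. Granting these inputs, finiteness is forced by the elementary observation that only single-divisor exponents $k e_i$ can yield inclusion-maximal cylinders inside $\Cont^{\geq m}(\mathfrak a\mathcal O_Y)$.
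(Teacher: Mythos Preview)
The paper does not itself prove this statement; it is quoted from \cite{deF_E_I} without argument. Your approach via a log resolution and multi-contact cylinders is the one used there, but the combinatorial core of your argument contains a genuine error.

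The claim ``the closure of every cylinder occurring above is contained in one of the finitely many cylinders $\Cont^{\geq\lceil m/r_i\rceil e_i}(E)$'' is false: the containment $\Cont^{\geq\nu}(E)\subseteq\Cont^{\geq\lceil m/r_i\rceil e_i}(E)$ would need $\nu_i\geq\lceil m/r_i\rceil$, which does not follow from $\sum_j r_j\nu_j\geq m$. Take $t=2$, $r_1=r_2=3$, $m=5$, $\nu=(1,1)$: then $\sum_j r_j\nu_j=6\geq 5$ but $\nu_1=\nu_2=1<2=\lceil 5/3\rceil$, and $\Cont^{\geq(1,1)}(E)$ is a fat irreducible component of $W$ contained in neither $\Cont^{\geq(2,0)}(E)$ nor $\Cont^{\geq(0,2)}(E)$. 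So the fat components are \emph{not} all of the form $\Cont^{\geq k e_i}(E)$, and both your finiteness argument and your divisorial identification break as written. The fix is to invoke Dickson's lemma---the componentwise-minimal $\nu\in\mathbb N^t$ with $\sum_i r_i\nu_i\geq m$ form a finite antichain, and the fat components of $W$ are the $\Cont^{\geq\nu}(E)$ for those minimal $\nu$ with $E_\nu\neq\emptyset$---and then to use that on the smooth $Y$ the set $\Cont^{\nu}(E)$ is a maximal divisorial set for \emph{every} $\nu$ (this is \cite[Corollary~2.6]{E_L_M}, recorded in the paper just before the proposition); in the example above, the generic point of $\Cont^{(1,1)}(E)$ corresponds to the exceptional divisor of the blow-up along $H_1\cap H_2$. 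A minor independent issue: you assume the center $Z$ of $\Pi$ lies in $V(\mathfrak a)$, but since $X$ may be singular outside $V(\mathfrak a)$ a log resolution must also modify $\Sing(X)$; simply take $Z$ to be the full non-isomorphism locus of $\Pi$ (still a proper closed subset, so $\L(Z)$ remains thin) and the reduction step is unaffected.
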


\bigskip

\noindent{\bf Arc spaces and \'etale morphisms}
\medskip

\noindent As our results are of local nature we will be assuming that $X$ is an affine algebraic variety. In addition,
most of the arguments used in the proofs along sections \ref{genericos}, \ref{irreducible_Hironaka} and \ref{valores} are first proven in an \'etale neighborhood of a point $\xi \in X$. Thus we    include here a few comments concerning the behavior of arcs up to \'etale morphisms. In the following lines we will be assuming that    $\mu:X'\to X$ is an \'etale morphism with $\mu(\xi')=\xi$, for some $\xi'\in X'$.

\begin{Rem}\label{rem:lifting_arc_etale_open}  
By \cite[Proposition 5.9]{Vojta}, we have that
$\L(X')=\L(X)\times_X X'$.
As a consequence,
the induced morphism $\mu_{\infty}: \L(X')\to \L(X)$ is \'etale (locally of finite type), therefore flat  (see \cite[\S 8.5, Proposition 17]{Bosch}), and hence open. 
\end{Rem}

\begin{Rem}\label{rem:lifting_arc_etale}
Let $\varphi:  \Spec(K[[t]])  \to X$  be an arc with center $\xi$.
Since $\mu:X'\to X$ is \'etale, then it is formally \'etale  (see \cite[Chapter I, Remark 3.22]{Milne}) and therefore there is a lifting $\varphi'$ with center $\xi'$,  $\varphi': \Spec(K'[[t]]) \to X'$, where $K'$ is a separable extension of $K$.
Another way to prove the same is by using that $\L(X')=\L(X)\times_X X'$ (Remark \ref{rem:lifting_arc_etale_open}).
In any case one gets    a commutative diagram: 
\begin{equation*}
\xymatrix{
\Spec(K'[[t]]) \ar[d]  \ar[r]^{\qquad\varphi'}   &  X' \ar[d]^{\mu} \\
\Spec(K[[t]])  \ar[r]^{\qquad\varphi}  & X.
}
\end{equation*}
In particular,  $\mu_{\infty}(\varphi')=\varphi$. 

\end{Rem}

\begin{Rem} \label{Rem:Blup_etale}
With the setting as in  Remark   \ref{rem:lifting_arc_etale},
let $\pi :Y\longrightarrow X$ be the blow up of $X$ at $\xi $ and  let $\varphi _Y$ be the lifting of $\varphi $ to $Y$ (provided that $\varphi$ is not constant). We have the following commutative diagram,

\begin{equation*}
\xymatrix@R=15pt@C=50pt{
	\Spec(K'[[t]]) \ar[dd]_{\varphi '} \ar[ddr]^{\varphi _Y'} & \\
	& & & & \\
	X' \ar[d]_{\mu } & Y' \ar[l]_>>>>>{\pi' } \ar[d]_{\mu _Y} \\
	X & Y \ar[l]_>>>>>{\pi } \\
	& & & & \\
	\Spec(K[[t]]) \ar[uu]^{\varphi } \ar[uur]_{\varphi_Y} & 
}\end{equation*}
where $Y'=Y\times_X X'$,   $\mu _Y$ is \'etale and $\varphi '_Y$ is the lifting of $\varphi '$ to $Y'$.
Note that $Y'$ is the blowup at $\mu^{-1}(\xi)$.
Let $\xi_Y$ be the center of $\varphi_Y$. If $\xi '_Y$ is the center of the arc $\varphi '_Y$, then $\mu _Y(\xi '_Y)=\xi _Y$.
\end{Rem}

\begin{Lemma} \label{Lema_Etale_Arco}
Let  $\varphi,\psi\in\mathcal{L}(X, {\xi})$ be  two arcs  such that $\varphi\in\overline{\{\psi\}}$. Assume that $\varphi'\in \mathcal{L}(X', {\xi'})$ is an arc with $\mu_{\infty}(\varphi')=\varphi$. Then there is an arc $\psi'\in \mathcal{L}(X',{\xi'})$ with $\varphi'\in\overline{\{\psi'\}}$ and such that  $\mu_{\infty}(\psi')=\psi$.
\end{Lemma}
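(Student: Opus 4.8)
Here is my plan for proving Lemma \ref{Lema_Etale_Arco}.

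\medskip

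The key structural fact available to us is Remark \ref{rem:lifting_arc_etale_open}: since $\mu : X' \to X$ is \'etale, we have a cartesian square $\L(X') = \L(X) \times_X X'$, and the induced morphism $\mu_\infty : \L(X') \to \L(X)$ is \'etale, hence flat and open. The plan is to exploit flatness of $\mu_\infty$ together with the going-down property that flat morphisms enjoy. Concretely, let $Z = \overline{\{\psi\}} \subseteq \L(X)$ be the irreducible closed subset with generic point $\psi$, and recall that by hypothesis $\varphi \in Z$, i.e. the point $\varphi$ is a specialization of $\psi$. We want to produce a point $\psi' \in \L(X')$ lying over $\psi$ such that $\varphi'$ is a specialization of $\psi'$ and $\psi'$ has center $\xi'$.

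\medskip

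First I would consider $W := \mu_\infty^{-1}(Z) \subseteq \L(X')$, a closed subset containing $\varphi'$ (since $\mu_\infty(\varphi') = \varphi \in Z$). Because $\mu_\infty$ is flat, the restriction $\mu_\infty|_W : W \to Z$ satisfies going-down; applying going-down to the specialization $\psi \rightsquigarrow \varphi$ in $Z$ and the point $\varphi' \in W$ lying over $\varphi$, we obtain a point $\psi' \in W$ lying over $\psi$ with $\psi' \rightsquigarrow \varphi'$, i.e. $\varphi' \in \overline{\{\psi'\}}$. This gives the specialization relation and the equality $\mu_\infty(\psi') = \psi$. The remaining point to check is that $\psi'$, viewed as an arc, has center $\xi'$. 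For this I would use that the center map $\L(X') \to X'$ (composition of $\L(X') \to \L_0(X') = X'$) is continuous, and that the centers behave compatibly under $\mu_\infty$ via $\mu$: the center of $\psi'$ maps under $\mu$ to the center of $\psi$, which is $\xi$. Since $\varphi' \in \overline{\{\psi'\}}$ has center $\xi'$ and specialization can only make the center more special, the center of $\psi'$ is a generization of $\xi'$ lying in the fiber $\mu^{-1}(\xi)$; but $\mu$ is \'etale, so $\mu^{-1}(\xi)$ is a finite set of closed points (in the fiber), forcing the center of $\psi'$ to equal $\xi'$. Alternatively, and perhaps more cleanly, one can replace $X$ and $X'$ at the outset by the localizations/open subsets given by removing the centers of all arcs other than $\xi, \xi'$, or simply invoke $\L(X',\xi') = \L(X,\xi)\times_X X'$ directly, reducing the whole argument to the base $\L(X,\xi)$ and performing going-down there.

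\medskip

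The main obstacle I anticipate is the bookkeeping around centers rather than the core going-down argument: one must be careful that the point $\psi'$ produced by going-down actually lies in $\L(X', \xi')$ and not merely in $\L(X')$ over some other point of $\mu^{-1}(\xi)$. The cleanest route is to work from the start with the fiber product description $\L(X',\xi') = \L(X,\xi) \times_X X'$ from Remark \ref{rem:lifting_arc_etale_open} (restricted to centers), so that flatness and openness of $\mu_\infty$ restrict to the corresponding map on arcs with fixed center, and the going-down step takes place entirely inside $\L(X,\xi)$ and its \'etale cover. Everything else --- that $\varphi \in \overline{\{\psi\}}$ translates to a specialization of points, that flat implies going-down, that a point over $\psi$ specializing to $\varphi'$ is exactly an arc $\psi'$ with the desired properties --- is then routine.
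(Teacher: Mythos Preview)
Your proposal is correct and follows essentially the same route as the paper: the paper's proof is a two-line appeal to the flatness of $\mu_\infty$ (from Remark \ref{rem:lifting_arc_etale_open}) and the going-down property, citing \cite[Chapter 1, Corollary 2.8]{Milne}. Your additional care about why the resulting $\psi'$ has center $\xi'$ is a detail the paper leaves implicit; your argument via the discreteness of the fiber $\mu^{-1}(\xi)$ (or, equivalently, restricting to $\L(X,\xi)$ from the start) is the natural way to make that step precise.
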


\begin{proof}
	
The morphism   $\mu_{\infty}:\mathcal{L}(X')\to\mathcal{L}(X)$ is  flat  (see Remark \ref{rem:lifting_arc_etale_open}). From here   it can be checked that there exists an arc  $\psi'\in\mathcal{L}(X', {\xi'})$ with  $\varphi'\in\overline{\{\psi'\}}$ and $\mu_{\infty}(\psi')=\psi$  (see \cite[Chapter 1, Corollary 2.8]{Milne}). 
\end{proof}

\section{Nash multiplicity sequences, the  persistance, and the ${\mathbb Q}$-persistance}\label{RAandNash}

In this section we will recall the notion of    {\em Nash multiplicity sequence} along an arc of a variety $X$. This will lead us to define an invariant for each arc $\varphi$ with center a given point $\xi\in X$: {\em the persistance}, and a refinement, the {\em ${\mathbb Q}$-persistance}.  
\bigskip

\noindent {\bf Nash multiplicity sequences}
\medskip

\noindent Let $X$ be an algebraic variety defined over a   perfect field $k$    and  let  $\xi \in X$ be a (closed) point.   Assume that $X$ is locally  a hypersurface in a neighborhood of $\xi$,  $X\subset V$, where   $V$ is smooth over $k$,  and work at the completion $\widehat{\mathcal O}_{V,{\mathfrak m}_{\xi}}$. Under these hypotheses,  in \cite{L-J}, Lejeune-Jalabert introduced the   {\em Nash multiplicity sequence  along an arc $\varphi\in {\mathcal L}(X, {\xi})$}  (in fact, the hypotheses in \cite{L-J} are weaker, but we are interested in working over perfect fields).   The Nash multiplicity sequence of $X$ along $\varphi$ is a non-increasing sequence of non-negative integers 
\begin{equation}
\label{Nash_sequence}
m_0\geq m_1\geq \ldots \geq m_l=m_{l+1}=...\geq 1, 
\end{equation}
where  $m_0$ is the usual multiplicity of $X$ at $\xi$, and the rest of the terms  are computed by considering  suitable stratifications on ${\mathcal L}_{m}(X, {\xi})$  defined via the action of certain differential operators on the fiber of the jets spaces  ${\mathcal L}_{m}(\text{Spec}(\widehat{\mathcal O}_{V,{\mathfrak m}_{\xi}}))$ over $\xi$ for $m\in {\mathbb N}$.  The sequence (\ref{Nash_sequence})   can be interpreted   as  the {\em multiplicity of $X$ along the arc $\varphi$}: thus it can be seen as a refinement of the usual multiplicity.   The sequence stabilizes at the value given by the multiplicity $m_l$ of  $X$ at the generic point of the arc $\varphi $ in $X$ (see \cite[\S 2, Theorem 5]{L-J}).  

\vspace{0.2cm}

In \cite{Hickel05}, Hickel generalized Lejeune-Jalabert's construction  to the case of an arbitrary variety $X$ and presented   the sequence (\ref{Nash_sequence}) in a different way  which we will explain along the following lines.

\vspace{0.2cm}

Since the  arguments are of local nature, let us suppose that  $X=\text{Spec}(B)$  is   affine.
Let $\xi\in  X$ be a point (which we may assume to be closed) of multiplicity $m_0$, and let $\varphi $ be an arc in $X$ centered at $\xi $. Consider   the natural morphism   
$$\Gamma _0=\varphi \otimes i:B\otimes_k k[t]\rightarrow K[[t]]\mbox{,}$$
which is additionally an arc in $X_0=X\times \mathbb{A}^1_k$ centered at the point $\xi _0=(\xi ,0)\in X_0$. This arc determines    a sequence of blow ups at points:
\begin{equation}\label{intro:diag:Nms}
\xymatrix@R=15pt@C=50pt{
	\mathrm{Spec}(K[[t]]) \ar[dd]^{\Gamma _0} \ar[ddr]^{\Gamma _1} \ar[ddrrr]^{\Gamma _l} & & & & \\
	& & & & \\
	X_0=X\times \mathbb{A}^1_k & X_1 \ar[l]_>>>>>{\pi _1} & \ldots \ar[l]_{\pi _2} & X_l \ar[l]_{\pi _l} & \ldots \\
	\xi _0=(\xi ,0) & \xi _1 & \ldots & \xi _l & \ldots 
}\end{equation}
Here, $\pi _i $ is the blow up of $X_{i-1}$ at $\xi _{i-1}$, where $\xi _{i}=\mathrm{Im}(\Gamma _{i})\cap \pi _{i}^{-1}(\xi _{i-1})$ for $i=1,\ldots ,l,\ldots $, and $\Gamma _i$ is the (unique) arc in $X_i$ with center $\xi _i$ which is obtained by lifting $\Gamma _0$ via the proper birational morphism $\pi _1\circ \ldots \circ \pi _i$.
This sequence of blow ups defines a non-increasing sequence
\begin{equation}
\label{Nash_sequence_2}
m_0\geq m_1\geq \ldots \geq m_l=m_{l+1}=...\geq 1, 
\end{equation}
where $m_i$ corresponds to the multiplicity of $X_i$ at $\xi _i$ for each $i=0,\ldots ,l,\ldots $. Note that $m_0$ is nothing but the multiplicity of $X$ at $\xi $, and it is proven that for hypersurfaces  the sequence (\ref{Nash_sequence_2}) coincides with the sequence (\ref{Nash_sequence}) above.  We will refer to the sequence of blow ups in (\ref{intro:diag:Nms}) as the {\em sequence of blow ups directed by $\varphi$}.
\bigskip

\noindent {\bf The  persistance}
\medskip

\noindent Let $\varphi\in \L(X, {\xi})$ be an arc   whose generic point is not contained in the stratum of multiplicity $m_0$ of $X$,  and consider, as in (\ref{Nash_sequence_2}), the Nash multiplicity sequence along $\varphi $. For the purposes of this paper, we will pay attention to the first time that the Nash multiplicity drops below $m_0$,
   see \cite[\S 2, Theorem 5]{L-J} and the discussion in \ref{setting}  below).  

\begin{Def}\label{def:rho}
	Let $\varphi $ be an arc in $X$ with center $\xi \in X$, a point of multiplicity $m_0>1$. Suppose that the 
  generic point of $\varphi$ is not contained in the stratum of points of multiplicity $m_0$ of $X$.  We denote by $\rho _{X,\varphi }$ the minimum number of blow ups directed by $\varphi $ which are needed to lower the  Nash multiplicity of  $X$ at $\xi $. That is, $\rho _{X,\varphi }$ is such that $m_0=\ldots =m_{\rho _{X,\varphi }-1}>m_{\rho _{X,\varphi }}$ in the sequence (\ref{Nash_sequence_2}) above. We call $\rho _{X,\varphi }$ the \textit{persistance of $\varphi$}. 
\end{Def}

To keep the notation as simple as possible, $\rho _{X,\varphi }$ does not contain a reference to the point $\xi $, since it is  determined by the center of $\varphi$.  

\begin{Rem}\label{Hickel_construction}
	Using Hickel's construction, it can be checked that the  first index $i\in \{1,\ldots, l+1\}$ for which there is a strict inequality   in (\ref{Nash_sequence_2})  (i.e., the first index $i$ for which  $m_0>m_i$) can be interpreted as the minimum   number of blow ups needed to {\em separate  the graph of $\varphi$ from } the stratum of points of multiplicity $m_0$ of $X_0$     (actually, to be precise, this statement has to be interpreted  in $B\otimes K[[t]]$, where the graph of $\varphi$ is defined). 
\end{Rem}

Next we  define a normalized version  of $\rho _{X,\varphi }$  in order to avoid the influence of the order of the arc in the number of blow ups needed to lower the  Nash multiplicity. 

\begin{Def}\label{def:rho_bar}  
	For a given arc $\varphi: \text{Spec}(K[[t]])\to X$   with center $\xi\in X$, we will write
	$$\bar{\rho }_{X,\varphi }=\frac{\rho _{X,\varphi }}{\nu_t(\varphi )}\mbox{, }  $$
	where $\nu_t(\varphi)$ denotes the oder of the arc, i.e., the usual order of $\varphi({\mathfrak m}_{\xi})$ at $K[[t]]$.   
\end{Def}

\begin{Def} \label{rho_funcion}  For   each point $\xi\in X$  we define the functions: 
\begin{equation}
\label{funcion_rho_1}
\begin{array}{rrclcrrcl}
\rho_X: & \L(X, \xi)& \to & {\mathbb Q}_{\geq 0} \cup \{\infty\} &    \ \ \text{ and } \ \   &  \overline{\rho}_X: & \L(X, \xi) & \to & {\mathbb Q}_{\geq 0 } \cup \{\infty\}\\ 
& \varphi & \mapsto &  \rho_{X,\varphi} & &  
& \varphi & \mapsto &  \overline{\rho}_{X,\varphi}.
\end{array} 
\end{equation}  
\end{Def}

\

\noindent{\bf  The  ${\mathbb Q}$-persistance}
\medskip

\noindent In our arguments we will be using a refinement of the persistance: the ${\mathbb Q}$-persistance.  As we will see both notions are closely related. 

\begin{Def}\label{q_persistencia}
Let $\varphi $ be an arc in $X$ with center $\xi \in X$, a point of multiplicity $m_0>1$, say $\varphi :\mathrm{Spec}(K[[t]])\longrightarrow X$. Consider the family of arcs given as $\varphi _n=\varphi \circ i_n$ for $n>1$, where $i_n^*:K[[t]]\longrightarrow K[[t^n]]$ maps $t$ to $t^n$. Then the {\em ${\mathbb Q}$-persistance of $\varphi$}, ${r}_{X,\varphi}$, is defined as the limit: 
\begin{equation}\label{r_limite_n} 
{r}_{X,\varphi }:= \lim _{n\rightarrow \infty }\frac{\rho _{X,\varphi _{n}}}{n}.  
\end{equation}
And the {\em normalized ${\mathbb Q}$-persistance of $\varphi$} is:
\begin{equation}\label{r_limite} 
\bar{r}_{X,\varphi }:=\frac{r_{X,\varphi}}{\nu_t({\varphi})}=\frac{1}{\nu_t(\varphi )}\cdot \lim _{n\rightarrow \infty }\frac{\rho _{X,\varphi _{n}}}{n}.  
\end{equation}	
\end{Def}
In \ref{setting} we will justify that both limits (\ref{r_limite_n}) and (\ref{r_limite}) exist. In fact, we will also see that the ${\mathbb Q}$-persistance of $\varphi$ can  somehow be  interpreted as the {\em order of contact of the arc $\varphi$ with the stratum of multiplicity $m_0$ of the variety $X_0$}.
 
 \begin{Def}
For   each point $\xi\in X$  we define the functions: 
\begin{equation}
\label{funcion_r}
\begin{array}{rrclcrrcl}
r_X: & \L(X, \xi)& \to & {\mathbb Q}_{\geq 0} \cup \{\infty\} &    \ \ \text{ and } \ \   &  \overline{r}_X: & \L(X, \xi) & \to & {\mathbb Q}_{\geq 0 } \cup \{\infty\}\\ 
& \varphi & \mapsto &  r_{X,\varphi} & &  
& \varphi & \mapsto &  \overline{r}_{X,\varphi}.
\end{array} 
\end{equation} 
\end{Def}

\begin{Rem}
Note that both, functions $\rho_{X}$ and $r_{X}$ are two invariants that encode the same piece of information.  
On the one hand, for each arc $\varphi$,  it can be shown   that   $\rho _{X,\varphi }$ can be  obtained by taking the integral part of $r_{X,\varphi }$ (see \cite[Proposition 5.11]{BEP2}, and also \cite{Br_E_P-E}). On the other,  expression (\ref{r_limite}) indicates that the function  $\overline{r}_{X}$ can be read from the function $\overline{\rho}_X$. 
\end{Rem}

\begin{Rem} \label{rho_etale}
{\em The persistance is stable by \'etale morphisms.}  In fact the whole sequence $\{m_i\}_{i\geq 0}$ in (\ref{Nash_sequence_2}) does not change in an \'etale neighborhood of $\xi\in X$  in the sense that we explain in the following lines. Using Remarks \ref{rem:lifting_arc_etale} and \ref{Rem:Blup_etale}, diagram (\ref{intro:diag:Nms}) can be lifted by pull back with $\mu$:
\begin{equation*}
\xymatrix@R=15pt@C=50pt{
	\Spec(K'[[t]]) \ar[dd]_{\Gamma'_0} \ar[ddr]^{\Gamma'_1} \ar[ddrrr]^{\Gamma'_l} \ar@/_5pc/[ddddd] & & & & \\
	& & & & \\
	X'_0=X'\times \mathbb{A}^1_k \ar[d]_{\mu_0} & X'_1 \ar[l]_>>>>>{\pi _1} \ar[d]_{\mu_1} & \ldots \ar[l]_{\pi _2} & X'_l \ar[l]_{\pi _l} \ar[d]_{\mu_l} & \ldots \\
	X_0=X\times \mathbb{A}^1_k & X_1 \ar[l]_>>>>>{\pi _1} & \ldots \ar[l]_{\pi _2} & X_l \ar[l]_{\pi _l} & \ldots \\
	& & & & \\
	\Spec(K[[t]]) \ar[uu]^{\Gamma _0} \ar[uur]_{\Gamma _1} \ar[uurrr]_{\Gamma _l} & & & & 
}\end{equation*}
Setting $\xi'_0=(\xi',0)$, observe that for each index $i=0,1,\ldots$,  one has that: 
\begin{enumerate}
\item[(i)] The morphism $\mu_i$ is  \'etale ; 
\item[(ii)] Each arc  $\Gamma'_i$ is a lifting of $\Gamma_i$;  
\item[(iii)] If we set  $\xi'_i$ as the center of $\Gamma'_i$, then  $\mu_i(\xi'_i)=\xi_i$. 
\end{enumerate}

Suppose that the  Nash multiplicity sequence for the arc $\varphi'$ is $\{m'_i\}_{i\geq 0}$, where $m'_i=\mult_{\xi'_i}(X'_i)$.
Since all the morphisms $\mu_i$ are \'etale  it can be concluded that  $m_i=m'_i$ for all $i\geq 0$.
In particular the persistance  of $\varphi$ is the same as the persistance  of $\varphi'$, and so  is  the normalized persistance   at  $\varphi$  and   $\varphi'$, i.e.,  
$$\rho_{X,\varphi}=\rho_{X',\varphi'} \ \text{ and } \  \overline{\rho}_{X,\varphi}=\overline{\rho}_{X',\varphi'}.$$

Finally, since any arc with center $\xi'$ induces an arc with center $\xi$, it can be concluded that we also have equality for the infimum value at $\xi$ and at $\xi'$,
$$\min\{\rho_{X,\varphi} \mid \varphi\in\mathcal{L}(X,\xi)\}=
\min\{\rho_{X',\varphi'} \mid \varphi'\in\mathcal{L}(X',\xi')\} \text{ and } $$
$$\min\{\overline{\rho}_{X,\varphi} \mid \varphi\in\mathcal{L}(X,\xi)\}=
\min\{\overline{\rho}_{X',\varphi'} \mid \varphi'\in\mathcal{L}(X',\xi')\}.$$ 
 From here it also follows that the same equalities hold for the ${\mathbb Q}$-persistance. 
\end{Rem}

\section{Rees algebras}\label{Rees_Algebras}

The stratum  defined by the  maximum value of the multiplicity function of a variety can be described using equations and weights (\cite{V}).
The same occurs with the Hilbert-Samuel function (\cite{Hir1}). As we will see, such descriptions are convenient when addressing a resolution of singularities by a composition of blow ups at suitably chosen regular centers. 
Rees algebras are natural objects to work with this setting, with the advantage that we can  perform algebraic operations on them such as taking the  integral  closure or the saturation by the action of  differential operators (the later if we work   on  smooth schemes defined over  perfect fields).   

\begin{Def}
	Let $R$ be a Noetherian ring. A \textit{Rees algebra $\mathcal{G}$ over $R$} is a finitely generated graded $R$-algebra
	$$\mathcal{G}=\bigoplus _{l\in \mathbb{N}}I_{l}W^l\subset R[W]$$
	for some ideals $I_l\in R$, $l\in \mathbb{N}$ such that $I_0=R$ and $I_lI_j\subset I_{l+j}\mbox{,\; } \forall l,j\in \mathbb{N}$. Here, $W$ is just a variable in charge of the degree of the ideals $I_l$. Since $\mathcal{G}$ is finitely generated, there exist some $f_1,\ldots ,f_r\in R$ and positive integers (weights) $n_1,\ldots ,n_r\in \mathbb{N}$ such that
	\begin{equation}\label{def:Rees_alg_generadores}
	\mathcal{G}=R[f_1W^{n_1},\ldots ,f_rW^{n_r}]\mbox{.}
	\end{equation}
\end{Def}

\begin{Rem}\label{casi_anillos}
	Note that this definition is more general than the (usual) one considering only algebras of the form $R[IW]$ for some ideal $I\subset R$, which we call Rees rings, where all generators have weight one. There is another special type of Rees algebras that will play a role in our arguments. We refer to them as {\em almost Rees rings}, and they are Rees algebras of the form $R[IW^b]$, for some ideal $I\subset R$ and some positive integer $b$ (i.e., these algebras are generated by the elements of the ideal $I$ in weight $b$). Finally,  Rees algebras can be defined over Noetherian schemes in the obvious manner.
\end{Rem}

\begin{Def}  
	Two Rees algebras over a Noetherian ring $R$  are \textit{integrally equivalent} if their integral closure in $\mathrm{Quot}(R)[W]$ coincide. We say that a Rees algebra over $R$, $\mathcal{G}=\oplus _{l\geq 0}I_lW^l$ is \textit{integrally closed} if it is integrally closed as an $R$-ring in $\mathrm{Quot}(R)[W]$. We denote by $\overline{\mathcal{G}}$ the integral closure of $\mathcal{G}$.
\end{Def}

\begin{Rem}\label{todas_casi}
	Note that $\overline{\mathcal{G}}$  is also a Rees algebra over $R$ (\cite[\S 1.1]{Br_G-E_V}).  It can be shown that any Rees algebra $\G=\oplus_lI_lW^l$ is finite over an almost Rees ring, i.e., there is  some positive integer $N$ such that $\G$ is finite over $R[I_NW^N]$    (see \cite[Remark 1.3]{E_V}). 
\end{Rem} 

\begin{Parrafo}
	\textbf{The Singular Locus of a Rees Algebra.} (\cite[Proposition 1.4]{E_V}). When working over smooth schemes one can attach to a Rees algebra a closed set as follows. 
	Let $\mathcal{G}$ be a Rees algebra over a smooth scheme $V$ defined over a perfect field $k$. The \textit{singular locus} of $\mathcal{G}$, Sing$(\mathcal{G})$, is the closed set given by all the points $\xi \in V$ such that $\nu _{\xi }(I_l)\geq l$, $\forall l\in \mathbb{N}$, where $\nu _{\xi}(I)$ denotes the order of the ideal $I$ in the regular local ring $\mathcal{O}_{V,\xi }$. If $\mathcal{G}=R[f_1W^{n_1},\ldots ,f_rW^{n_r}]$, the singular locus of $\mathcal{G}$ can be computed as
	$$\mathrm{Sing}(\mathcal{G})=\left\{ \xi \in \mathrm{Spec}(R):\, \nu _{\xi }(f_i)\geq n_i,\; \forall i=1,\ldots ,r\right\} \subset V\mbox{.}$$
\end{Parrafo}

Note that the singular locus of the ${\mathcal O}_V$-Rees algebra    generated by $f_1W^{n_1}, \ldots ,f_rW^{n_r}$ does not coincide with the usual definition of the singular locus of the subscheme of $V$ defined by $f_1,\ldots ,f_r$.

\begin{Ex} \label{Ex:HiperRees} Suppose that $R$ is smooth over a perfect field $k$. 
	Let $X\subset\Spec(R)=V$ be a hypersurface with $I(X)=(f)$ and let $b>1$ be the maximum value of the multiplicity of $X$.
If we set $\mathcal{G}=R[fW^b]$ then $\Sing(\mathcal{G})=\Mm$ is the set of points of $X$ having maximum multiplicity. Along this paper we will be using a   generalization of this description of the maximum multiplicity locus  in the case where $X$ is an   equidimensional singular algebraic variety (defined over a perfect field $k$)   (see Theorem \ref{Th:PresFinita} and the discussion in \ref{setting}). 
\end{Ex}

\begin{Parrafo}
	{\bf Singular locus, integral closure and differential saturation.}  
	A Rees algebra $\mathcal{G}=\oplus _{l\geq 0}I_lW^l$ defined on  a smooth scheme $V$ over a perfect field $k$,  is \textit{differentially closed} (or {\em differentially saturated}) if there is an affine open covering $\{U_i\}_{i\in I}$ of $V$,  such that for every $D\in \mathrm{Diff}^{r}(U_i)$ and $h\in I_l(U_i)$, we have $D(h)\in I_{l-r}(U_i)$ whenever $l\geq r$ (where $\mathrm{Diff}^{r}(U_i)$ is the locally free sheaf over $V$ of $k$-linear differential operators of order less than or equal to $r$). In particular, $I_{l+1}\subset I_l$ for $l\geq 0$. We denote by $\mathrm{Diff}(\mathcal{G})$ the smallest differential Rees algebra containing $\mathcal{G}$ (its \textit{differential closure}). (See \cite[Theorem 3.4]{V07} for the existence and construction.)
	
	It can be shown (see \cite[Proposition 4.4 (1), (3)]{V3}) that for a given Rees algebra $\mathcal{G}$  on $V$,  
	$$\Sing(\G) =\Sing (\overline{\G})= \Sing (\mathrm{Diff}(\mathcal{G})).$$
	
\end{Parrafo}
As we will see in Section \ref{Presentaciones_locales}, the problem of {\em simplification of the  multiplicity of an algebraic variety}  can  be translated into the problem of {\em resolution of a suitably defined Rees algebra} (see (\ref{simple_1}) and (\ref{simple_2})). This motivates Definitions \ref{def:transf_law} and \ref{def:res_RA} below   (see also Example \ref{ejemplo_hipersuperficie}). 

\begin{Def}\label{def:transf_law}
	Let $\mathcal{G}$ be a Rees algebra on    a smooth scheme $V$. A \textit{$\mathcal{G}$-permissible blow up}
	$$V\stackrel{\pi }{\leftarrow} V_1\mbox{,}$$
	is the blow up of $V$ at a smooth closed subset $Y\subset V$ contained in $\mathrm{Sing}(\mathcal{G})$ (a {\em permissible center for $\mathcal{G}$}). We denote then by $\mathcal{G}_1$ the (weighted) transform of $\mathcal{G}$ by $\pi $, which is defined as
	$$\mathcal{G}_1:=\bigoplus _{l\in \mathbb{N}}I_{l,1}W^l\mbox{,}$$
	where 
	\begin{equation}\label{eq:transf_law}
	I_{l,1}=I_l\mathcal{O}_{V_1}\cdot I(E)^{-l}
	\end{equation}
	for $l\in \mathbb{N}$ and $E$ the exceptional divisor of the blow up $V\stackrel{\pi }{\leftarrow} V_1$.
\end{Def}

\begin{Def}\label{def:res_RA}
	Let $\mathcal{G}$ be a Rees algebra over a smooth scheme $V$. A \textit{resolution of $\mathcal{G}$} is a finite sequence of blow ups
	\begin{equation}\label{diag:res_Rees_algebra}
	\xymatrix@R=0pt@C=30pt{
		V=V_0 & V_1 \ar[l]_>>>>>{\pi _1} & \ldots \ar[l]_{\pi _2} & V_l \ar[l]_{\pi _l}\\
		\mathcal{G}=\mathcal{G}_0 & \mathcal{G}_1 \ar[l] & \ldots \ar[l] & \mathcal{G}_l \ar[l]
	}\end{equation}
	at permissible centers $Y_i\subset \text{Sing} ({\mathcal G}_i)$, $i=0,\ldots, l-1$, such that $\mathrm{Sing}(\mathcal{G}_l)=\emptyset$, and such that the exceptional divisor of the composition $V_0\longleftarrow V_l$ is a union of hypersurfaces with normal crossings. Recall that a set of hypersurfaces $\{H_1,\ldots, H_r\}$ in a smooth $n$-dimensional $V$  has normal crossings at a point $\xi\in V$ if there is a regular system of parameters $x_1,\ldots, x_n\in {\mathcal O}_{V, \xi}$ such that if $\xi\in H_{i_1}\cap \ldots \cap H_{i_s}$,  and $\xi \notin H_l$ for $l\in \{1,\ldots, r\}\setminus \{i_1,\ldots,i_s\}$, then
	${\mathcal I}(H_{i_j})_{\xi}=\langle x_{i_j}\rangle$  for $i_j\in \{i_1,\ldots, i_s\}$;
	we say that $H_1,\ldots, H_r$ have normal crossings in V if they have normal crossings at each point of $V$.  
\end{Def}

\begin{Ex}\label{ejemplo_hipersuperficie}
	With the setting of Example~\ref{Ex:HiperRees}, a resolution of the Rees algebra $\mathcal{G}=R[fW^b]$ induces a sequence of transformations  such that  the multiplicity of the strict transform of $X$  decreases:
	\begin{gather*}
	\xymatrix@R=0pt@C=30pt{
		\mathcal{G}=\mathcal{G}_0 & \mathcal{G}_1 \ar[l] & \ldots \ar[l] & \mathcal{G}_{l-1} \ar[l] & \mathcal{G}_l \ar[l]\\
		V=V_0 & V_1 \ar[l]_>>>>>{\pi _1} & \ldots \ar[l]_{\pi _2} & V_{l-1}\ar[l]_{\pi_{l-1}} & V_l \ar[l]_{\pi _l}\\
		\  \; \; \; \; \; \cup & \cup & & \cup & \cup \\
		X=X_0 & X_1 \ar[l]_>>>>>{\pi _1} & \ldots \ar[l]_{\pi _2} & X_{l-1} \ar[l]_{\pi_{l-1}} & X_l \ar[l]_{\pi _l}
	} \\
	b=\max\mult(X_0) =  \max\mult(X_1) =\cdots = \max\mult(X_{l-1})>\max\mult(X_l).
	\end{gather*}
	Here  each $X_i$ is the strict transform of $X_{i-1}$ after the blow up $\pi_i$.
	Note that the set of points of $X_l$ having multiplicity $b$ is $\Sing(\mathcal{G}_{l})=\emptyset$.
\end{Ex}

\begin{Rem}
Resolution of Rees algebras is known to exists when $V$  is a smooth scheme defined over a field of characteristic zero  (\cite{Hir}, \cite{Hir1}). In \cite{V1} and \cite{B-M} different algorithms of resolution of Rees algebras are presented (see also \cite{E_V97}, \cite{E_Hau}). An algorithmic  resolution  requires the definition of invariants associated with the points of the singular locus of a given Rees algebra so as to define a stratification of this closed set. This is a way to select the permissible centers to blow up.  The most important invariant involved in the resolution process is {\em Hironaka's order function} defined below.  
\end{Rem}

\begin{Parrafo}\label{Def:HirOrd}
\textbf{Hironaka's order function for Rees algebras.}
(\cite[Proposition 6.4.1]{E_V})
Let $V$ be a smooth scheme over a perfect field $k$ and let $\G$ be an ${\mathcal O}_V$-Rees algebra. 
We define the \textit{order of an element $fW^{n}\in \mathcal{G}$ at $\xi \in \mathrm{Sing}(\mathcal{G})$} as
$$\mathrm{ord}_{\xi }(fW^{n}):=\frac{\nu _{\xi }(f)}{n}\mbox{.}$$
We define the \textit{order of the Rees algebra $\mathcal{G}$ at $\xi \in \mathrm{Sing}(\mathcal{G})$} as the infimum of the orders of the elements of $\mathcal{G}$ at $\xi$, that is
	$$\mathrm{ord}_{\xi }(\mathcal{G}):=\inf _{l\geq 0}\left\{ \frac{\nu_{\xi }(I_l)}{l}\right\} \mbox{.}$$
	This is what we call {\em Hironaka's order function of $\G$ at the point $\xi$}. 
	If $\mathcal{G}=R[f_1W^{n_1},\ldots ,f_rW^{n_r}]$ and $\xi\in \mathrm{Sing}(\mathcal{G})$ then it can be shown (see \cite[Proposition 6.4.1]{E_V}) that: 
	$$\mathrm{ord}_{\xi }(\mathcal{G})=\min _{i=1,\ldots,r}\left\{ \mathrm{ord}_{\xi }(f_iW^{n_i})\right\} \mbox{.}$$
	It can be proven that for any point $\xi\in\Sing(\mathcal{G})$ we have
	$\ord_{\xi}(\mathcal{G})=\ord_{\xi}(\overline{\mathcal{G}})=\ord_{\xi}(\Diff(\mathcal{G}))$  
	(see   \cite[Remark 3.5, Proposition 6.4 (2)]{E_V}).    Finally, along this paper  we use  `$\nu$'  to denote the    usual order of an element or an ideal at a regular local ring, and     `$\ord$' for  the order of a Rees algebra  at a regular local ring. 
	\end{Parrafo}

\begin{Rem}
Let  $V$ be a smooth scheme over a field of characteristic zero $k$,  and let  $\G$ be a Rees algebra on  $V$. Then  it can be shown that $\G$, $\overline{\G}$ and $\Diff \G$ share the same resolution invariants and therefore a resolution of any of them induces (naturally) a resolution of any of the others (\cite[Proposition 3.4, Theorem 4.1,  Theorem 7.18]{E_V}, \cite{Villamayor2005_2}).  
  
\end{Rem}

\section{Local presentations of the Multiplicity} \label{Presentaciones_locales}

Let $X$ be an equidimensional algebraic variety of dimension $d$   defined over a perfect field $k$.
Consider the multiplicity function
\begin{align*}
\mult_X: X & \longrightarrow \mathbb{N} \\
\xi & \longrightarrow \mult_X(\xi)=\text{mult}_{{\mathfrak m}_{\xi}} {\mathcal O}_{X,\xi}
\end{align*}
where $\text{mult}_{{\mathfrak m}_{\xi}} {\mathcal O}_{X,\xi}$ denotes   the multiplicity of the local ring $\mathcal{O}_{X,\xi}$ at the maximal ideal $\mathfrak{m}_{\xi}$. It is known that the function $\mult_X$ is upper-semi-continuous (see \cite{Dade}). In particular, suppose that $m_0$ is the maximum value of the multiplicity at points of $X$, i.e.,  suppose that $m_0=\max\mult_X$,  then the set
$$\mathrm{\underline{Max}}\mult_X:=\left\lbrace \xi \in X \mid \mult_X(\xi)\geq m_0\right\} =
\left\{ \xi \in X\mid \mult_{X}(\xi)=m_0\right\rbrace $$
is closed  (although not necessarily regular). It is also known  that the multiplicity function can not increase after a blow up $\phi:X'\to X$ with   regular   center $Y$
provided that $Y\subset \mathrm{\underline{Max}}\mult_X$ (cf. \cite{Dade}).  This means that $\mult_{X'}(\xi')\leq \mult_X(\xi)$ for  
$\xi=\phi(\xi')$, $\xi '\in X'$.

One could try to approach  a resolution of singularities by defining a sequence of blow ups   at regular equimultiple centers  
\begin{equation}
\label{simple_1}
\xymatrix{X=X_0 & X_1\ar[l] & \ar[l] \ldots & \ar[l] X_{l-1} & \ar[l] X_l}
\end{equation}
so that 
\begin{equation}
\label{simple_2}
m_0=\max\mult_{X_0}=\max\mult_{X_1} =\ldots =\max\mult_{X_{l-1}} >\max\mult_{X_l}. 
\end{equation}
A sequence like (\ref{simple_1})  with the property (\ref{simple_2})  is a {\em simplification of the multiplicity of $X$}.  
\medskip

One way to approach a simplification of the multiplicity of $X$ is by describing the set $\Mm$ via the singular locus of a suitably chosen Rees algebra $\G$, defined on some smooth scheme $V$, and then trying to find a resolution of $\G$ (compare with Examples \ref{Ex:HiperRees} and 
\ref{ejemplo_hipersuperficie} where the case of hypersurfaces is treated).  To be more precise, in \cite{V} it is proven that  for each  $\xi\in \mathrm{\underline{Max}}\mult_X$   there is an (\'etale) neighborhood  $U\subset X$ of $\xi$     which we denote again by $X$ to ease the notation,     and  an embedding 
$X\subset V=\Spec(R)$ for some smooth $k$-algebra $R$, together with  an  $R$-Rees algebra, $\mathcal{G}$, 
so that  
\begin{equation} \label{eq:LocPresSing}
\mathrm{\underline{Max}}\mult_{X}=\Sing(\mathcal{G}). 
\end{equation}
and so that, in addition, given a sequence of blow ups at regular equimultiple  centers,  
\begin{equation} \label{eq:LocalGSeq}
\xymatrix@R=0pt@C=30pt{
	V=V_0 & V_1 \ar[l]_>>>>>{\phi _1} & \ldots \ar[l]_{\phi _2} & V_l \ar[l]_{\phi _l}\\
	\  \; \; \; \; \; \cup & \cup & & \cup \\
	X=X_0 & X_1 \ar[l] & \ldots \ar[l] & X_l \ar[l] \\
	\mathcal{G}=\mathcal{G}_0 & \mathcal{G}_1 & \ldots  & \mathcal{G}_l
}\end{equation}
 the following equality of closed subsets holds:  
\begin{equation} \label{eq:LocPresSing2}
\left\lbrace \xi\in X_j \mid \mult_{X_j}(\xi)=m_0 \right\rbrace=
\Sing(\mathcal{G}_j), 
\qquad j=0,1,\ldots,l.
\end{equation}

It is worth mentioning that  in fact, the link between the maximum multiplicity locus of $X$ and the Rees algebra $\G$ is much stronger (it can be checked that equality (\ref{eq:LocPresSing2}) is also  preserved after considering smooth morphisms or restrictions to open subsets).
Thus  the problem of finding a simplification of the multiplicity of an algebraic variety is  translated into the problem of finding a resolution of a suitable Rees algebra defined  on  a smooth scheme.
And this can be done when the characteristic of the base field is zero.
The local embedding together with the Rees algebra $\G$  strongly linked to $\Mm$ is what we call a {\em local presentation of the multiplicity}, and we will use the notation $(V,\G)$.   Precise statements about local presentations can be found for instance  in \cite[Part II]{Br_V1} or in \cite{Su_V}.

\begin{Thm} \label{Th:PresFinita}
	\cite[7.1]{V}
	Let $X$ be a reduced equidimensional scheme  of finite type over a perfect field $k$.
	Then for every point $\xi\in X$ there exists a local presentation for the function $\mult_X$ in an (\'etale) neighborhood of $\xi$. 
\end{Thm}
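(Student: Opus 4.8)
The statement is quoted from \cite{V}; I would prove it by reconstructing Villamayor's construction, whose heart is to realise the set $\Mm$ as the singular locus of a Rees algebra manufactured from a \emph{finite transversal projection} of $X$ onto a smooth $d$-dimensional scheme. We may assume $X=\Spec(B)$ is affine, that $\xi$ is a closed point, and that $m_0=\mult_X(\xi)=\max\mult_X$. If $m_0=1$ then, $X$ being reduced and equidimensional, $X$ is smooth at $\xi$, and for any local embedding $X\hookrightarrow \Vn$ into a smooth $n$-dimensional scheme the Rees algebra $\mathcal{O}_{\Vn}[I(X)W]$ already satisfies $\Sing(\mathcal{O}_{\Vn}[I(X)W])=V(I(X))=\Mm$, with the analogous equality preserved after permissible blow ups; so from now on assume $m_0>1$. \emph{Step 1 (a finite transversal projection).} After replacing $X$ by an \'etale neighbourhood of $\xi$ --- which, $k$ being perfect, makes the residue field extension separable and lets a sufficiently general linear projection do its job --- one constructs a closed immersion $X\hookrightarrow \Vn=\Spec(R)$, with $R$ smooth of dimension $n$, together with a smooth surjection $\Vn\to\Vd$ onto a smooth $d$-dimensional scheme, such that the composite $\beta\colon X\to\Vd$ is finite and \emph{transversal at $\xi$}: the generic rank of $\mathcal{O}_{X,\xi}$ over $S:=\mathcal{O}_{\Vd,\beta(\xi)}$ equals $\mult_X(\xi)=m_0$ and $\beta^{-1}(\beta(\xi))=\{\xi\}$ as sets. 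Existence of such a projection after an \'etale base change is the generic projection lemma (Noether normalisation in the local/\'etale setting together with the openness of transversality among linear projections).

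\emph{Step 2 (the Rees algebra attached to the projection).} Let $z_1,\dots,z_{n-d}$ be fibre coordinates of $\Vn$ over $\Vd$. Each class $\bar z_j\in\mathcal{O}_{X,\xi}$ is integral over $S$, and one lets $f_j\in S[z_j]\subset\mathcal{O}_{\Vn}$ be the characteristic polynomial of multiplication by $\bar z_j$ on $\mathcal{O}_{X,\xi}$ viewed as an $S$-module: a monic polynomial of degree $m_0$ lying in $I(X)$, and a Tschirnhausen change of each $z_j$ (using the trace of $\mathrm{Frac}(\mathcal{O}_{X,\xi})/\mathrm{Frac}(S)$) arranges $\nu_\xi(f_j)=m_0$. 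Set
\[
\mathcal{G}:=\Gn:=\mathcal{O}_{\Vn}\big[f_1W^{m_0},\dots,f_{n-d}W^{m_0}\big].
\]
The crucial point to be proved is that, because $\beta$ is transversal and $m_0$ is the generic rank, for points $\eta$ near $\xi$ one has $\mult_X(\eta)\ge m_0$ if and only if $\nu_\eta(f_j)\ge m_0$ for all $j$, whence $\Sing(\Gn)=\Mm$, which is (\ref{eq:LocPresSing}). When $n=d+1$ this is just the statement that the multiplicity of a hypersurface equals the order of its equation, cf.\ Example~\ref{Ex:HiperRees}; in general one may alternatively eliminate the fibre variables down to a single hypersurface $V(f)\subset V^{(d+1)}$ via a primitive element and an elimination/differential-saturation argument over $\Vd$, and it is here that perfectness of $k$ is used essentially (the behaviour of $\Sing$ under integral closure and differential saturation is recalled in the excerpt).

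\emph{Step 3 (compatibility with permissible blow ups --- the main obstacle).} Given a $\mathcal{G}$-permissible blow up $\Vn\leftarrow V_1$ at a regular centre $Y\subset\Mm=\Sing(\mathcal{G})$, one must show that the weighted transform $\mathcal{G}_1$ of Definition~\ref{def:transf_law} satisfies $\Sing(\mathcal{G}_1)=\{\eta\in X_1:\mult_{X_1}(\eta)=m_0\}$, and then iterate to obtain (\ref{eq:LocPresSing2}). I expect this to be the hard part, and it breaks into: (i) propagating the transversal projection to the blow ups --- the centre $Y$ maps to a regular centre of $\Vd$ and $X,\Vn$ blow up compatibly over it, permissibility for $\mathcal{G}$ matching transversality of $\beta$ along $Y$; (ii) checking that the transformation law $I_{l,1}=I_l\mathcal{O}_{V_1}\cdot I(E)^{-l}$ of (\ref{eq:transf_law}) reproduces the characteristic polynomials attached to $X_1$, using that $\mult_{X_1}$ never exceeds $m_0$ (cf.\ \cite{Dade}) and equals $m_0$ exactly along the transform of $\Mm$; and (iii) the compatibility of the elimination/differential-closure construction with monoidal transformations. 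Each of these is established in \cite{V}.

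Granting Steps 1--3, the pair $(\Vn,\Gn)$ is a local presentation of $\mult_X$ in an \'etale neighbourhood of $\xi$. Finally, since everything is built from \'etale-local data (the projection $\beta$ and the polynomials $f_j$), the equalities (\ref{eq:LocPresSing}) and (\ref{eq:LocPresSing2}) are automatically preserved under further \'etale morphisms and under restriction to open subsets, which yields the stronger functoriality mentioned after the statement; a resolution of $\Gn$, hence a simplification of $\mult_X$, is then available as soon as $\mathrm{char}(k)=0$.
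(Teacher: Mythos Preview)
Your proposal follows essentially the same route as the paper, which does not prove Theorem~\ref{Th:PresFinita} in full but cites \cite{V} and sketches the construction in \S\ref{setting}: one passes to an \'etale neighbourhood, builds a finite projection $S\subset B$ of generic rank $m_0$, and attaches a Rees algebra on the ambient smooth scheme using distinguished monic polynomials in the fibre variables.

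There are two technical discrepancies worth flagging. First, the paper's sketch takes the \emph{minimal} polynomials $f_i\in S[x_i]$ of the module generators $\theta_i$, of a priori different degrees $m_i$, whereas you take the characteristic polynomials, all of degree $m_0$; both choices appear in the literature and lead to local presentations, but the verification that $\Sing(\G)=\Mm$ differs accordingly, and your claim that a Tschirnhausen shift ``arranges $\nu_\xi(f_j)=m_0$'' is doing more work than you indicate (it is really the transversality of $\beta$, not the shift, that forces this). Second, the paper sets $\Gn=\Diff(R[f_1W^{m_1},\ldots,f_{n-d}W^{m_{n-d}}])$, i.e.\ it differentially saturates, while you do not. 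Since $\Sing(\G)=\Sing(\Diff\G)$ this does not affect (\ref{eq:LocPresSing}), but the saturation is what later makes the elimination algebra $\Gd=\Gn\cap S[W]$ behave well (cf.\ (\ref{algebra_eliminacion})), so if you intend to continue to Hironaka's order as in \S\ref{Hironaka_qpersistance} you will eventually need it. Your Step~3 correctly isolates the substantive content---compatibility of the Rees-algebra description with permissible blow ups---and, like the paper, defers it to \cite{V}.
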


\begin{Rem}
	Local presentations are not unique. For instance, once a local (\'etale) embedding $X\subset V$  is fixed, there may be different  ${\mathcal O}_V$-Rees algebras representing $\Mm$. However, it can be proven that they all lead to the same simplification of the multiplicity of $X$, i.e., they all lead to the same sequence (\ref{eq:LocalGSeq}) with $\Sing\G_l=\emptyset$ (at least in characteristic zero, see \cite{Br_G-E_V}, \cite{Br_V2} and \cite{E_V}). 
\end{Rem} 

\section{Hironaka's order function, the persistance, and the ${\mathbb Q}$-persistance} \label{Hironaka_qpersistance}

 For a given $d$-dimensional singular algebraic variey $X$ defined over a perfect field,  and once a local presentation of the multiplicity is chosen, say $(V,\G)$ (see Section \ref{Presentaciones_locales}), one would like to design an algorithm to find a resolution of $\G$ (i.e., an algorithm to find a simplification ot the multiplicity of $X$). When the characteristic is zero this is done via the so called {\em resolution invariants} that are used to asign a string of numbers to each point $\xi \in \Mm= \Sing (\G)$. In this way  one  can  define  an upper semi-continuos function $g:\Sing (\G) \to (\Gamma, \geq)$, where $\Gamma$ is some well ordered set, and whose maximum value determines the first center to blow up. This function is constructed so that its maximum value drops after each blow up and as a consequence a resolution of $\G$ is achieved after a finite number of steps. 
	
Now,  it turns out that the first relevant invariant, i.e., the first relevant coordinate of the  function $g$    is {\em Hironaka's order function in dimension $d$}, $\ord^{(d)}_X$. This function is  defined using the   so called {\em elimination algebra in dimension $d$}. 
We will not give the precise definition here;   instead we will  describe a way to construct it (full details and the precise definition can be found in  \cite{V07} and \cite{Br_V}).  We underline   that both  the elimination algebra, and Hironaka's order function in dimension $d$  can be defined in any characteristic (for the definition  it suffices to work over perfect fields). 

\

Thus, the main purpose of this section is to  establish the common setting and the notation that will be used in the proofs of our results in the following sections. To this end: 

\begin{itemize}
	\item[(i)]  We will sketch  the main ideas of the proof of Theorem \ref{Th:PresFinita} which will serve us to set a common context for our proofs; 
	\item[(ii)]   We will present a construction of the elimination algebra and give the definition of Hironaka's orden function in dimension $d$ (all this using the setting established in (i)); 
	\item[(iii)]  We will give an expression that leads to  the computation of    the persistance and the  ${\mathbb Q}$-persistance of a given arc   using the elimination algebra;
	\item[(iv)] Items (i), (ii) and (iii) are set in an \'etale neigborhood of a point $\xi\in X$; in \ref{morfismo_etale}, we will explain how the previous items  give  us enough  information to prove our results for arcs in  $X$. 
	
\end{itemize} 
\begin{Parrafo}  \label{setting} {\bf  The common setting for the proofs of the results in sections \ref{genericos}, \ref{irreducible_Hironaka} and \ref{valores}. }  Our statements are of local nature.  So, let us assume that $X$ is an affine algebraic variety of dimension $d$ over a perfect field $k$, and let $\xi\in \Mm$ be a point of multiplicity $m_0$.  Taking this starting point we now sketch some of the main lines in the proof of Theorem \ref{Th:PresFinita}, and then we will pursue  objectives (ii) and (iii) afterwards.  Most of the contents of these parts were developed and proved in \cite{Br_E_P-E} and \cite{BEP2}.
	
\

	\noindent {\bf Some ideas behind the proof of Theorem \ref{Th:PresFinita}}
	
	\
	
	\noindent  In \cite[\S5, \S7]{V} it is proven  that  after considering   suitably defined \'etale extensions, $k\subset k'$, and $\mu: X'\to X$ with $\mu(\xi')=\xi$,  we are in the following setting: 
	$X'=\Spec(B)$,  $\xi'\in \underline{\text{Max}} \text{ Mult}_{X'}$, 
	and there is a 
	smooth  $k'$-algebra,   $S$ with the following properties: 
	\begin{enumerate}
		\item[(i)] There is an extension $S\subset B$ which is finite, inducing a finite morphism 
		$$\beta: \text{Spec}(B)\to \text{Spec}(S);$$ 
		\item[(ii)]  If  $K(S)$ if  the field of fractions  of $S$ and     $\mathrm{Quot}(B)$ is  the  total quotient ring  of   $B$, then the rank of $\mathrm{Quot}(B)$ as $K(S)$-module equals $m_0$, i.e., the generic rank of $B$ as $S$-module equals $m_0=\text{max mult}_{X'}$. 
	\end{enumerate}
	
	Under these  assumptions, $B=S[\theta_1,\ldots, \theta_{n-d}]$,  for some $\theta_1,\ldots, \theta_{n-d}\in B$ and some $n>d$. Observe that the previous extension induces a natural embedding $X'\subset V^{(n)}:=\text{Spec}(R)$, where $R=S[x_1,\ldots, x_{n-d}]$. 
	
	Now, if  $f_i(x_i)\in K(S)[x_i]$ denotes the minimal polynomial of $\theta_i$ for $i=1,\ldots, (n-d)$, then it can be shown that in fact  $f_i\in S[x_i]$,  and as a consequence $\langle f_1(x_1), \ldots, f_{n-d}(x_{n-d})\rangle\subset {\mathcal I}(X')$,  the defining ideal of $X'$ in $V^{(n)}$.  Finally, if each polinomial $f_i$ is of degree $m_i$,  it can be  proven that the differential Rees algebra
	\begin{equation}
	\label{G_Representa}
	\Gn:=\Diff(R[f_1W^{m_1}, \ldots, f_{n-d}W^{m_{n-d}}])
	\end{equation}
	is a local presentation of $\underline{\text{Max}} \text{ mult}_X$   at $\xi$ (in \'etale topology). Therefore, a resolution of $\Gn$ induces a simplification of the multiplicity of $X$ (see (\ref{simple_1}) and (\ref{simple_2})). The pair $(\Vn, \Gn)$ gives the local presentation of the multiplicity stated in Theorem \ref{Th:PresFinita}.

	\
	
	\noindent	{\bf The elimination algebra and Hironaka's order function in dimension $d$} 
	
	\
	
	\noindent Following the previous argument, denote by $$\alpha: \Spec(S[x_1,\ldots, x_{n-d}])\to \Spec(S)$$ the natural morphism induced by the inclusion $S\subset R= S[x_1,\ldots, x_{n-d}]$.    Taking $\Gn$ as in (\ref{G_Representa}),   up to integral closure the {\em elimination algebra in dimension $d$ over $\Vd$}  is: 
	\begin{equation}
	\label{algebra_eliminacion}
	\Gd:=\Gn\cap S[W],
	\end{equation}
	(see \cite[Definition 4.10, Theorem 4.11]{V07}, and also \cite[\S  8.11]{Br_V}). 
	Then {\em Hironaka's order function in dimension $d$} is defined as: 
	\begin{equation}
	\begin{array}{rrcll}
	\ord^{(d)}_X: & \Mm  & \to & {\mathbb Q}  &  \\
	&  \zeta  &   \mapsto & \ord_{\alpha(\zeta')}\Gd & \text{ if } \mu(\zeta')=\zeta.
	\end{array} 
	\end{equation}
	It can be shown that for each point $\zeta \in \Mm$, the number $\ord^{(d)}_X(\zeta)$ does not depend on the choice of the local presentation $(\Vn,\Gn)$ nor on the choice of the finite projection to a smooth   $d$-dimensional scheme,  so far as it is generic enough (cf. \cite[Theorem 5.5]{V07}, \cite[Theorem 10.1]{Br_V} and  \cite[\S 25]{Br_V2}).

	Finally, it is worthwhile mentioning that, when the characteristic is zero, there is a strong link between the Rees algebras $\Gn$ and $\Gd$. For instance, it can be shown that $\alpha$ induces a homeomorphism  between $\Sing \Gn$ and $\Sing \Gd$, and for each regular center $Y\subset \Sing \Gn$, $\alpha(Y)\subset \Sing \Gd$ is regular too (and viceversa). Moreover,  it can be proven that finding a resolution of $\Gn$ is equivalent to finding a resolution of $\Gd$. When the characteristic is positive, the link between $\Gn$  and $\Gd$ is weaker, since   in general,   the containment $\alpha(\Sing(\Gn))\subseteq \Sing (\Gd)$ may be strict.  See \cite[Theorem 2.9, Lemma 7.1]{V00}, \cite[Corollary 2.12, \S 6]{V07},  \cite[Theorem 28.10]{Br_V2}).

	\
	
	\noindent{\bf The restriction of $\Gn$ to $X'$}

	\
	
	\noindent  Continuing with the arguments above,  let $\G_{X'}$ denote the restriction of $\Gn$ to $X'=\Spec(B)$ (where $\Gn$ is as in   (\ref{G_Representa})). It can be shown that this ${\mathcal O}_{X'}$-Rees algebra is well defined up to integral closure (i.e., it does not depend on the choice of the local presentation, see \cite[Theorem 5.3]{Abad}). Then we have the following commutative diagram together with different Rees algebras:   
\begin{equation}\label{diagrama_presentacion}
\begin{aligned}
\xymatrix@R=1mm@C=0,7pc{
	(V^{(n)},\Gn) & & & (X',\G_{X'}) \\
	R=S[x_1,\ldots ,x_{n-d}] \ar[r] &     S[x_1,\ldots ,x_{n-d}]  /\langle f_1,\ldots, f_{n-d}\rangle \ar[rr] & \ & B \\
	& \\ & \\
		S \ar[uuu]^{\alpha^*}      
	\ar[uuurrr]_{\beta^*}  
	& &  &  \\
	(V^{(d)},\Gd).
}
\end{aligned}
\end{equation} 	
	
\noindent Now, it can be proven that the following  extension of $B$-Rees algebras
\begin{equation}
\label{ex_finita_alge}
\beta^*(\Gd)\subset \G_{X'}
\end{equation}
 is finite   (see \cite[Theorem 4.11]{V07},  the discussion in \cite[3.8]{BEP2} and also \cite[4.6]{BEP2}).  In addition, by Remark \ref{todas_casi} we can assume that, up to integral closure, 
\begin{equation}
\label{salvo_entera}
\Gd=S[IW^b]
\end{equation}
for some ideal $I\subset S$ and some positive integer $b$. As a consequence, using again that  $\beta^*(\Gd)\subset \G_{X'}$  is a finite extension, we can assume that, up to integral closure, $\G_{X'}=B[(IB)W^b]$. Finally, it can be checked that ${\mathbb V}(IB)=\text{\underline{Max} mult}_{X'}$ (where $\mathbb{V}(IB)$ denotes the Zariski closed set determined by the ideal $IB$ in $X'$).
This follows from the fact that, since $\Gn=\oplus J_nW^n$ is a differential Rees algebra, $\Sing(\Gn)={\mathbb V}(J_n)$ for all $n\geq 1$, cf. \cite[Proposition 3.9]{V07}.
	
		\
	
	\noindent{\bf On the computation of the  ${\mathbb Q}$-persistance}

	\
	
\noindent For an arc $\varphi'\in \L(X', {\xi'})$ it can be shown (\cite[(5.10.2)]{BEP2} that:   	
	\begin{equation}
	\label{r_escrito}
	r_{X',\varphi'}=\mathrm{ord}_{t}(\varphi '(\G_{X'}))\in \mathbb{Q}_{\geq 1}, 
	\end{equation}
	and hence,    
	\begin{equation}
	\label{r_escrito_n}
	\bar{r}_{X',\varphi'}=\frac{\mathrm{ord}_{t}(\varphi' (\G_{X'}))}{\nu_t(\varphi')}\in \mathbb{Q}_{\geq 1}\mbox{,}
	\end{equation}
	where, if we assume that $\G_{X'}$ is locally generated by $g_1W^{b_1},\ldots , g_{n-d}W^{b_{n-d}}$
	in some affine chart   $\Spec(B)$ of  $X'$ containing the center of the arc $\varphi': B\to K[[t]]$, then $$\varphi'({\mathcal{G}}_{X'}):=K[[t]][\varphi' (g_1)W^{b_1},\ldots ,\varphi'(g_{n-d})W^{b_{n-d}}]\subset K[[t]][W].$$    Thus  in (\ref{r_escrito_n})     $\mathrm{ord}_{t}(\varphi '(\G_{X'}))$  denotes the order of the Rees algebra at the regular local ring $K[[t]]$, while $\nu_t$ denotes the usual order      at $K[[t]]$  (see \ref{Def:HirOrd}).   
	From here it can be checked that, if the generic point of the arc $\varphi'$ is not contained in $\Mmp=\Sing (\Gn)$, then   $\varphi'(\G_{X'})\subset K[[t]]$   is a non zero Rees algebra. As a consequence, $r_{X',\varphi'}$ is finite, and so is the persistance $\rho_{X',\varphi'}$.  From equality  (\ref{r_escrito}) it also follows that the limit in (\ref{r_limite_n}) exists. 
	\bigskip

\noindent{\bf Some consequences of Zariski's multiplicity formula for finite projections} 
\medskip

\noindent  Since the generic rank of the extension $S\subset B$ equals $m_0=\text{max mult}_X$, by Zariski's multiplicity formula for finite projections (cf., \cite[Chapter 8, \S 10, Theorem 24]{Z-SII}) it follows that: 
\begin{enumerate}
\item The point $\xi'$ is the unique point in the fiber over $\beta(\xi')\in \text{Spec}(S)$; 
\item The residue fields at $\xi'$ and $\beta(\xi')$ are isomorphic; 
\item The defining  ideal of $\beta(\xi')$ at $S$, $\mathfrak{m}_{\beta(\xi')}$, generates a reduction of the maximal ideal of $\xi'$, $\mathfrak{m}_{\xi'}$, at $B_{\mathfrak{m}_{\xi'}}$. 
\end{enumerate}
Observe that for a given arc $\varphi': B\to K[[t]]$  in $X$ we obtain,  by composition,   an arc $\widetilde{\varphi}': S \to K[[t]]$  in $V^{(d)}$, and it follows that: 
\begin{equation}
\label{r_formula}
\overline{r}_{X',\varphi'}=\frac{\ord_t(\varphi'(\G_{X'}))}{\nu_t(\varphi'({\mathfrak m}_{\xi'}))}=
\frac{\ord_t(\varphi'(\beta^*(\Gd)))}{\nu_t(\varphi'({\mathfrak m}_{\xi'}))}=
\frac{\ord_t(\widetilde{\varphi}'(\Gd))}{\nu_t(\widetilde{\varphi}'({\mathfrak m}_{\beta(\xi')}))},
\end{equation}
where the second equality follows from the fact that $\beta^*(\Gd)\subset \G_{X'}$ is a finite extension (see (\ref{ex_finita_alge}));   and the third because ${\mathfrak m}_{\beta(\xi')}B$ is a reduction of ${\mathfrak m}_{\xi'}$. 
\end{Parrafo}

\begin{Parrafo}

\noindent{\bf The ${\mathbb Q}$-persistance, the persistance,  and the use of  \'etale morphisms.} \label{morfismo_etale} 
Notice that  that expressions (\ref{r_escrito}) and (\ref{r_escrito_n}) are actually computed in an \'etale neighborhood of $\xi\in X$. For an \'etale morphism $X'\to X$, if $\varphi\in \L(X, {\xi})$  we use the fact   that there is always a lifting $\varphi'\in \L(X', {\xi'})$ with  $\mu_{\infty}(\varphi')=\varphi$ (see Remark \ref{rem:lifting_arc_etale}),  and by Remark \ref{rho_etale}: 
\begin{equation}
\label{r_etale}
\bar{r}_{X,\varphi}=\frac{1}{\nu_t(\varphi )}\cdot \lim _{n\rightarrow \infty }\frac{\rho _{X,\varphi _{n}}}{n}=\frac{1}{\nu_t(\varphi' )}\cdot \lim _{n\rightarrow \infty }\frac{\rho _{X',\varphi' _{n}}}{n}=\frac{\mathrm{ord}_{t}(\varphi' (\G_{X'}))}{\nu_t(\varphi')}=\bar{r}_{X',\varphi'}.
\end{equation} 

\end{Parrafo}

Finally,  as indicated above, the function $\overline{r}_{X}$ is not upper-semi-continuous in $\mathcal{L}(X, {\xi})$. However, if   two arcs $\varphi,\psi\in\mathcal{L}(X,\xi)$  have  the same order (as arcs), and if  $\varphi\in\overline{\{\psi\}}$  then one obtains the expected inequality:

\begin{Lemma} \label{r_semicont}
Let $\varphi,\psi\in\mathcal{L}(X)$ two arcs centered at $\xi$.
If $\varphi\in\overline{\{\psi\}}$ and $\nu_t(\varphi)=\nu_t(\psi)$ then
$$\overline{r}_{X,\varphi}\geq\overline{r}_{X,\psi}.$$
\end{Lemma}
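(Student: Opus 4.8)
The plan is to reduce everything to the behaviour of the order function of a Rees algebra under specialization of arcs. By the reductions explained in \ref{setting} and \ref{morfismo_etale} (in particular equation (\ref{r_etale})), we may pass to an \'etale neighborhood where we have a finite projection $\beta\colon X'\to V^{(d)}$, a local presentation, and the identity (\ref{r_escrito_n}): for any arc $\varphi'\in\L(X',\xi')$ with generic point not in $\Mm$,
$$
\bar r_{X',\varphi'}=\frac{\ord_t(\varphi'(\G_{X'}))}{\nu_t(\varphi')}.
$$
Since $\varphi,\psi$ have the same order as arcs and $\nu_t$ is unaffected by the passage to the \'etale cover, the denominators agree, so it suffices to show $\ord_t(\varphi(\G_X))\ge\ord_t(\psi(\G_X))$. (I will abusively write $X$ for the \'etale model and drop primes.) If $\psi$ has generic point inside $\Sing(\G_X)=\Mm$ then $\ord_t(\psi(\G_X))=\infty$ and, since $\varphi\in\overline{\{\psi\}}$, the generic point of $\varphi$ also lies in $\Mm$, so both sides are $+\infty$ and there is nothing to prove; hence assume both arcs are fat with respect to $\G_X$.

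The key step is the following semicontinuity statement: if $\G_X$ is locally generated by $g_1W^{b_1},\dots,g_rW^{b_r}$ on an affine chart $\Spec(B)$ containing $\xi$, then for each $j$ the function $\sigma\mapsto \nu_t(\sigma(g_j))$ is lower semicontinuous on the arc space, in the sense that it can only drop under generization; equivalently $\nu_t(\varphi(g_j))\ge\nu_t(\psi(g_j))$ whenever $\varphi\in\overline{\{\psi\}}$. Concretely, $\nu_t(\sigma(g_j))\ge N$ is the condition that $\sigma$ factors through the closed subscheme $\Cont^{\ge N}(g_j)\subset\L(X)$ (Definition \ref{contacto_ideal}); since $\varphi$ lies in the closure of $\psi$, every closed subset of $\L(X)$ containing $\psi$ also contains $\varphi$, so $\nu_t(\psi(g_j))\ge N$ forces $\nu_t(\varphi(g_j))\ge N$. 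Taking $N=\nu_t(\psi(g_j))$ gives the inequality for each generator. (Here the hypothesis $\nu_t(\varphi)=\nu_t(\psi)$ is not even needed for this half; it is what makes the passage from the inequality on $\ord_t$ to the inequality on $\bar r$ clean, since the normalizing denominators then coincide.)

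Now I combine the generator-wise inequalities. By \ref{Def:HirOrd} applied over the regular one-dimensional ring $K[[t]]$,
$$
\ord_t(\varphi(\G_X))=\min_{j}\frac{\nu_t(\varphi(g_j))}{b_j}\ \ge\ \min_{j}\frac{\nu_t(\psi(g_j))}{b_j}=\ord_t(\psi(\G_X)),
$$
where one must be mildly careful that $\ord_t$ of a Rees algebra equals the minimum of the orders of a generating set — this is exactly the cited formula from \cite[Proposition 6.4.1]{E_V}, valid since $K[[t]]$ is regular, and it is insensitive to the choice of generators because $\ord_t$ agrees on integrally equivalent algebras. Dividing by $\nu_t(\varphi)=\nu_t(\psi)$ and using (\ref{r_escrito_n}) yields $\bar r_{X,\varphi}\ge\bar r_{X,\psi}$, and transporting back along the \'etale morphism via (\ref{r_etale}) gives the claim for the original arcs.

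The main obstacle is a matter of bookkeeping rather than depth: one must make sure the generator-wise lower semicontinuity of $\nu_t$ is stated for arcs valued in possibly different field extensions $K$ and $K'$ (the arcs $\varphi$ and $\psi$ need not be defined over the same field), and that ``$\varphi\in\overline{\{\psi\}}$'' is used correctly as a relation between scheme points of $\L(X)$. The cleanest way to handle this is to phrase everything in terms of the closed subschemes $\Cont^{\ge N}(g_j)$, which are defined scheme-theoretically in Definition \ref{contacto_ideal} independently of any base field, so that the only input is the tautology that closure is monotone; the field-of-definition issue then disappears entirely.
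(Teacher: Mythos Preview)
Your proof is correct and follows essentially the same route as the paper's: pass to the \'etale model of \ref{setting}, use formula (\ref{r_escrito_n}), and invoke lower semicontinuity of the order under specialization; you spell out the semicontinuity step generator-by-generator via the closed sets $\Cont^{\geq N}(g_j)$, whereas the paper works directly with the almost-Rees-ring form $\G_X=B[(IB)W^b]$ from (\ref{salvo_entera}) and simply writes the inequality. One small point worth tightening: the reduction to the \'etale cover requires not only (\ref{r_etale}) but also that the specialization relation $\varphi\in\overline{\{\psi\}}$ lifts to $\varphi'\in\overline{\{\psi'\}}$ in $\L(X')$, which is precisely the content of Lemma~\ref{Lema_Etale_Arco} and is what the paper invokes explicitly.
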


\begin{proof}
By Lemma \ref{Lema_Etale_Arco} and the discussion in Remark \ref{rho_etale}, it suffices to prove the statement    after considering an \'etale extension of $X$, which we denote again by $X$ for simplicity. Thus we can assume   that we are in the same setting as  in \ref{setting}.
 Recall that the elimination algebra is, up to integral closure, $\mathcal{G}^{(d)}=S[IW^b]$ and $\mathcal{G}_X=B[(IB)W^b]$ (see \ref{salvo_entera}). By formula (\ref{r_escrito_n}), 
\begin{equation*}
\overline{r}_{X,\varphi}=\frac{\ord_t(\varphi(\G_X))}{\nu_t(\varphi({\mathfrak m}_{\xi}))}\geq
\frac{\ord_t(\psi(\G_X))}{\nu_t(\psi({\mathfrak m}_{\xi}))}=\overline{r}_{X,\psi}. 
\end{equation*}
\end{proof}

\section{Generic values in contact loci sets} \label{genericos}

Using the work developed in \cite[\S 5]{BEP2}, we start this section by giving a stronger version of the second statement in Theorem \ref{principal}. More precisely we show that  Hironaka's order function can actually be read by considering suitably chosen divisorial arcs with center $\xi$ (see Theorem \ref{arco_divisorial} below). Next, observe that,    from the way the normalized  ${\mathbb Q}$-persistance,  $\overline{r}_X$,   is   computed  (see (\ref{r_escrito}) and (\ref{r_escrito_n})) at first glance it is not obvious    that the equality of the expression   in (\ref{igualdad_divisorial}) below, holds 
generically. We address this kind of questions in   Propositions \ref{proposicion_abierto} and  \ref{corolario_abierto}. First we fix some notation and some constructions that we will be using  along this and the following section.

\begin{Rem}\label{nuevo_c}  Let $X\longleftarrow X_1$ be  the blow up at $\xi$, and let $X_1\longleftarrow\overline{X_1}$ be the normalization.
The total transform of the maximal ideal $\mathfrak{m}_{\xi}$ is locally principal at $\overline{X_1}$.
After removing a closed set of codimension at least two in $\overline{X_1}$, we can restrict to an open set $U$ such that we have a log resolution of $\mathfrak{m}_{\xi}$:
\begin{equation}
\label{def_c_i}
\mathfrak{m}_{\xi}\mathcal{O}_{U}=I(H_1)^{c_1}\cdots I(H_{\ell})^{c_{\ell}}
\end{equation}
where the hypersurfaces $H_i$ are irreducible and have only normal crossing in $U$.
Note that the integers $c_1,\ldots,c_{\ell}$ do not depend on the choice of $U$ since the complement of $U$ in $\overline{X_1}$  has codimension larger or equal than two.

Denote by $h_i\in H_i$  the generic point of $H_i$ and let $K_i$ denote the residue field of the local ring $\mathcal{O}_{\overline{X_1},h_i}$.
Set 
\begin{equation} \label{def_minc}
c=\min\{c_1,\ldots,c_{\ell}\}.
\end{equation}

Note that if $\mu: \in X'\to X$ is an \'etale neighborhood of $\xi$ with $\mu(\xi')=\xi$,  and if we consider the normalized blow up of $X'$ at $\xi'$, $X'\longleftarrow\overline{X_1'}$,  then   one may have a different number of hypersurfaces
\begin{equation*}
\mathfrak{m}_{\xi'}\mathcal{O}_{U'}=I(H'_1)^{c'_1}\cdots I(H'_{\ell'})^{c'_{\ell'}},
\end{equation*}
at a suitable open subset $U'\subset \overline{X_1'}$,  but the sets of integers  are  the same 
$\{c_1,\ldots,c_{\ell}\}=\{c'_1,\ldots,c'_{\ell'}\}$.

Moreover  suppose $X'=\Spec(B)$ is as in the setting \ref{setting},
for some ring $B$  together with a finite morphism $\beta^*: S\to B$.
Then we have that under those hypotheses one has that ${\mathfrak m}_{\beta(\xi')}B_{{\mathfrak m}_{\xi'}}$ is a reduction of ${\mathfrak m}_{\xi'}$. 

Then, 
since $\mathfrak{m}_{\beta(\xi')}B$ is a reduction of ${\mathfrak m}_{\xi'}$,  after blowing up  $\Vd$ at ${\beta(\xi')}$ and 
$X'$ at  $\xi'$, and after considering  the normalization $\overline{X'_1}$  of $X'_1$, there is a commutative diagram, 
\begin{equation}\label{diagrama_normal}
\xymatrix{
	X' \ar[d]_\beta & \ar[l]_{{\pi}} X'_1 \ar[d]_{\beta_1} &  \ar[l] \overline{X'_1} \ar[dl]^{\overline{\beta}_1} \ar@/_1pc/[ll]_{\overline{\pi}}\\
	V^{(d)} & \ar[l]_{\tilde{\pi}} V^{(d)}_1 & 
}
\end{equation}
where $\beta_1$  is a finite morphism and so is $\overline{\beta}_1$ (see \cite[Theorem 4.4]{COA}). 

If $v_0$ is the valuation on $V^{(d)}$ defined by the maximal ideal $\mathfrak{m}_{\beta(\xi')}\subset S$,
note that the valuation ring of $v_0$ is $\mathcal{O}_{V_1^{(d)},e}$, where $E$ denotes the exceptional divisor of $\tilde{\pi}$ and $e$ is the generic  point of  $E$. 

Denote by $h'_i\in H'_i$  the generic point of $H'_i$. 
The local rings $\mathcal{O}_{\overline{X'_1},h'_i}$ correspond to valuations $v_i$, $i=1,\ldots,\ell'$, and $v_1,\ldots,v_{\ell'}$ are exactly the extensions of $v_0$ to $\overline{X'_1}$.
We will denote by $K'_i$   the residue field of $\mathcal{O}_{\overline{X'_1},h'_i}$.
Consider for any $i\in \{1,\ldots, {\ell}'\}$,  the natural morphism,
\begin{equation}
\label{ejemplo_divisorial}
\eta'_i: B \to {\mathcal O}_{\overline{X'_1}, h'_i}\to \widehat{{\mathcal O}_{\overline{X'_1}, h'_i}}\simeq K'_i[[t]].
\end{equation}
Note that $\eta'_i$ is a divisorial arc in $X'$.

Consider the $K'_i$-morphism:  $i_n: K'_i[[t]]\to K'_i[[t]]$ where $t\mapsto t^{n}$.
We will denote by $\eta'_{i,n}$ the arc obtained from $\eta'_{i}$ by composing with $i_n$
\begin{equation} \label{arco_potencia}
\eta'_{i,n}: B\stackrel{\eta'_i}{\longrightarrow} K'_i[[t]] \stackrel{i_n}{\longrightarrow} K'_i[[t]].
\end{equation}
\end{Rem}

Now we revisite Theorem~\ref{principal} and restate  the second part of that result     in  Theorem \ref{arco_divisorial}. Our purpose is to  prove  a stronger statement by showing that the arc giving the equality in (\ref{igualdad_divisorial_i})   can be chosen to be divisorial.
Compared to the proof  given in \cite{BEP2}  (see Remark~\ref{Rem_Diferencia}) here  we follow a sligthely different strategy  by considering normalized blowing ups and the commutative diagram (\ref{diagrama_normal}).
This allows us to find the desired divisorial arc.  As we indicate in the proof below, the fact that  inequatlity (\ref{desigualdad_i}) holds for all arcs also shows in our way to prove Theorem  \ref{arco_divisorial}.

\begin{Thm} \label{arco_divisorial} Let $X$ be a $d$-dimensional   algebraic  variety defined over a perfect field $k$, and let $\xi\in \Mm$. Then 
 there is a divisorial arc $\eta\in \L(X, {\xi})$ such that 
\begin{equation}
\label{igualdad_divisorial}
\ord_X^{(d)}(\xi)=\overline{r}_{X,\eta}=\frac{1}{\nu_t(\eta)}\lim_{n\to \infty}\frac{\rho_{X,\eta_{n}}}{n}.
\end{equation}

\end{Thm}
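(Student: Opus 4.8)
The strategy is to work in the \'etale neighborhood $X'\to X$ described in \ref{setting}, where we have the finite projection $\beta:X'=\Spec(B)\to \Vd=\Spec(S)$, the elimination algebra $\Gd=S[IW^b]$ (up to integral closure), and the commutative diagram (\ref{diagrama_normal}) obtained by blowing up $\Vd$ at $\beta(\xi')$ and $X'$ at $\xi'$ and then normalizing. By Remark \ref{rho_etale} and the computation (\ref{r_etale}), it suffices to exhibit a divisorial arc $\eta'\in\L(X',\xi')$ with $\overline{r}_{X',\eta'}=\ord^{(d)}_X(\xi)$, since a divisorial arc in $X'$ pushes forward to a divisorial arc in $X$ with the same normalized $\Q$-persistance. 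So from now on I work entirely over $X'$.

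\textbf{Key steps.} First I would recall, from \cite[Proposition 6.4.1]{E_V} or its analogue, that $\ord^{(d)}_X(\xi)=\ord_{\beta(\xi')}\Gd=\ord_{\beta(\xi')}(S[IW^b])=\frac{\nu_{\beta(\xi')}(I)}{b}$, and use that this infimum is \emph{computed} by the order along the divisor obtained after blowing up $\Spec(S)$ at $\beta(\xi')$: concretely, if $v_0$ is the $\mathfrak m_{\beta(\xi')}$-adic valuation, with valuation ring $\mathcal O_{\Vd_1,e}$ ($e$ the generic point of the exceptional divisor $E$ of $\tilde\pi$), then $v_0(I)=\nu_{\beta(\xi')}(I)$ and $v_0(\mathfrak m_{\beta(\xi')})=1$. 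Second, I would pick an index $i$ so that the valuation $v_i$ on $\overline{X'_1}$ (corresponding to the generic point $h'_i$ of a component $H'_i$) extends $v_0$, and consider the divisorial arc $\eta'_i:B\to\widehat{\mathcal O_{\overline{X'_1},h'_i}}\simeq K'_i[[t]]$ of (\ref{ejemplo_divisorial}). Third, I would compute $\overline{r}_{X',\eta'_i}$ using formula (\ref{r_formula}): since $\beta^*(\Gd)\subset \G_{X'}$ is finite, $\ord_t(\eta'_i(\G_{X'}))=\ord_t(\eta'_i(\beta^*(\Gd)))=\ord_t(\widetilde{\eta'_i}(\Gd))$ where $\widetilde{\eta'_i}:S\to K'_i[[t]]$ is the induced arc; and since $\mathfrak m_{\beta(\xi')}B$ is a reduction of $\mathfrak m_{\xi'}$, $\nu_t(\eta'_i(\mathfrak m_{\xi'}))=\nu_t(\widetilde{\eta'_i}(\mathfrak m_{\beta(\xi')}))$. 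Hence $\overline{r}_{X',\eta'_i}=\frac{\ord_t(\widetilde{\eta'_i}(\Gd))}{\nu_t(\widetilde{\eta'_i}(\mathfrak m_{\beta(\xi')}))}=\frac{(1/b)\nu_t(\widetilde{\eta'_i}(I))}{\nu_t(\widetilde{\eta'_i}(\mathfrak m_{\beta(\xi')}))}$. Fourth, the arc $\widetilde{\eta'_i}$ is (up to a finite multiple) the divisorial arc attached to $v_0$, because $v_i$ restricts to $v_0$; writing $e_i$ for the ramification index of $v_i$ over $v_0$ one gets $\nu_t(\widetilde{\eta'_i}(J))=e_i\, v_0(J)$ for every ideal $J\subset S$, so the factor $e_i$ cancels in the quotient and $\overline{r}_{X',\eta'_i}=\frac{(1/b)v_0(I)}{v_0(\mathfrak m_{\beta(\xi')})}=\frac{\nu_{\beta(\xi')}(I)}{b}=\ord^{(d)}_X(\xi)$. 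This $\eta'_i$ is divisorial by construction (Remark \ref{nuevo_c}), so $\eta:=\mu_\infty(\eta'_i)$ is the desired divisorial arc in $\L(X,\xi)$.

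\textbf{Main obstacle.} The delicate point is the identification in the fourth step: that the order of the elimination ideal $I$ along the divisorial arc $\widetilde{\eta'_i}$ really equals (a fixed multiple of) $\nu_{\beta(\xi')}(I)=\ord^{(d)}_X(\xi)\cdot b$, and not something larger. In other words, one must check that $\ord_{\beta(\xi')}\Gd$ is \emph{attained} by the monomial valuation given by the first blow-up of $\Spec(S)$ at the smooth point $\beta(\xi')$, i.e. that $\nu_{\beta(\xi')}(I_\ell)/\ell$ achieves its infimum already at this single blow-up. This is where one invokes that for a Rees algebra of the form $S[IW^b]$ over a regular local ring, the order $\nu_{\beta(\xi')}(I)/b$ is precisely realized by the order of vanishing along the exceptional divisor of the blow-up at the closed point (a standard fact about orders of ideals in regular local rings), combined with the reduction statement $\mathfrak m_{\beta(\xi')}B$ of $\mathfrak m_{\xi'}$, which is exactly what guarantees the denominators match. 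Handling the ramification bookkeeping between $v_0$ on $\Vd_1$ and its extensions $v_i$ on $\overline{X'_1}$, and verifying that $\overline{\beta}_1$ being finite (as in \cite[Theorem 4.4]{COA}) lets us pick at least one such $i$ with the extension property, is the remaining technical content, but it follows the setup already laid out in Remark \ref{nuevo_c} and diagram (\ref{diagrama_normal}).
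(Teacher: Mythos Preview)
Your proposal is correct and follows essentially the same route as the paper: pass to the \'etale neighborhood of \ref{setting}, use the normalized blow-up diagram (\ref{diagrama_normal}) from Remark \ref{nuevo_c}, take the divisorial arc $\eta'_i$ of (\ref{ejemplo_divisorial}), and compute $\overline{r}_{X',\eta'_i}=a/b$ via (\ref{r_formula}). Your ramification index $e_i$ is exactly the paper's $c'_i$ (so the paper writes the key identity as $\nu_t(\eta'_i(IB))=ac'_i$ and $\nu_t(\eta'_i)=c'_i$), and the ``main obstacle'' you flag is not really delicate---in a regular local ring the order of $I$ at $\mathfrak m_{\beta(\xi')}$ is by definition the vanishing order along the exceptional divisor of the blow-up; the only point needing a citation is that divisoriality is preserved under the push-forward $\overline{\mu}=\mu\circ\overline{\pi}$, which the paper handles via \cite[Proposition 2.10, Lemma 3.2]{I_Crelle}.
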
	

\begin{proof} 
	
We will first prove that the theorem holds for arcs defined   in some \'etale neighborhood of  $\xi\in \Mm$, say $\mu: X'\to X$, with $\mu(\xi')=\xi$,  and after we will show that the same statement actually holds for arcs in $X$. 
	
Since the statements are of local nature, we will start by assuming  that, locally, in an \'etale  neighborhood of $\xi\in \Mm$,  $\mu: X'\to X$, with $\mu(\xi')=\xi$,    one has that $X'=\text{Spec}(B)$ for some ring $B$  together with a finite morphism $\beta^*: S\to B$ as in the setting of  \ref{setting}. Also, recall that under those hypotheses   one has that ${\mathfrak m}_{\beta(\xi')}B_{{\mathfrak m}_{\xi'}}$ is a reduction of ${\mathfrak m}_{\xi'}$. 

Now we use the construction and the notation introduced in Remark \ref{nuevo_c}. Thus  have the numbers $c_1,\ldots,c_{\ell}$ and $c$ defined   in (\ref{def_c_i})  and (\ref{def_minc}),  and  the diagram (\ref{diagrama_normal}).

Suppose that, up to integral closure, $\mathcal{G}^{(d)}=S[IW^b]$ for some ideal $I\subset S$ (see (\ref{salvo_entera})).
Then, if $\ord_{\beta(\xi')}\mathcal{G}^{(d)}=\frac{a}{b}$, one has that $\nu_{\beta(\xi')}(I)=a$. Hence, the total transform of $I$ in $\Vd_1$ is $I{\mathcal O}_{V_1^{(d)}}=\mathcal{I}({E})^aJ$, for some sheaf of ideals $J\nsubseteq {\mathcal I}({E})$. 

Observe that any arc $\varphi'\in \L(X', {\xi'})$ induces an arc  $\tilde{\varphi}'\in \L(\Vd,{\beta(\xi')})$, i.e., $\beta_{\infty}(\varphi')=\tilde{\varphi}'$,  and if $\tilde{\varphi}'$ is not constant (i.e., if $\varphi'$ is not constant) then it can be lifted to an arc   in $\tilde{\varphi}_1'\in\L(\Vd_1)$. By (\ref{r_formula}) it follows that 
\begin{equation}
\label{desigualdad_etale}
\overline{r}_{X,\varphi'}=\frac{\ord_t(\tilde{\varphi}'(\Gd))}{\nu_t(\tilde{\varphi}')}=
\frac{\frac{\nu_t(\tilde{\varphi}_1'(\mathcal{I}({E})^aJ))}{b}}{\nu_t(\tilde{\varphi}_1'(\mathcal{I}({E})))}=
\frac{a\nu_t(\tilde{\varphi}_1'(\mathcal{I}({E})))+\nu_t(\tilde{\varphi}_1'(J))}{b\nu_t(\tilde{\varphi}_1'(\mathcal{I}({E})))} \geq \frac{a}{b},
\end{equation}
which gives a proof of the inequality in (\ref{desigualdad_i}).

Now, consider for any $i\in \{1,\ldots, {\ell}'\}$,  the
divisorial arc $\eta'_i$ from  (\ref{ejemplo_divisorial}).
It can be checked that $\nu_t({\eta'_i}(IB))=ac'_i$.
Then we have 
\begin{equation}
\label{igualdad_etale}
\overline{r}_{X',\eta'_i}= \frac{\ord_t({\eta'_i}(\G_{X'}))}{\nu_t({\eta'_i})}=
\frac{\nu_t({\eta'_i}(IB))}{b\nu_t({\eta'_i})}=\frac{ac'_i}{bc'_i}=\frac{a}{b}.  
\end{equation}
To conclude, both (\ref{desigualdad_etale})  and (\ref{igualdad_etale}) are  actually proven for arcs defined in an \'etale neighborhood $X'$ of $\xi\in X$. The fact that inequality  (\ref{desigualdad_etale}) holds for arcs in $X$ follows from Remark \ref{rho_etale}. On the other hand, it can also be checked that equality (\ref{igualdad_etale}) holds for a divisorial arc in $X$: set $\overline{\mu}:= \mu \circ \overline{\pi} $,
and  observe that the divisorial arc $\eta_i'$ from (\ref{ejemplo_divisorial}) induces a commutative diagram:
\begin{equation}
\label{Divisorial_etale} 
\xymatrix{\Spec(K'_i[[t]]) \ar[rr]^{\eta_i'} \ar[d] &  &  \overline{X'_1}\ar[d]^{\overline{\mu}}\\
 \Spec(K_i[[t]]) \ar[rr]_{\eta}   & & X}
\end{equation} 
where $\eta=\overline{\mu}_{\infty}(\eta_i')$. Since $\overline{\mu}: \overline{X}_1'\to X$ is a dominant morphism between varieties of the same dimension one has that  $\eta$ is divisorial if and only if $\eta_i'$ is divisorial (see  \cite[Proposition 2.10, Lemma 3.2]{I_Crelle}, where the case of varieties over ${\mathbb C}$ is treated, and the general case follows  using  \cite[\S 6, Corollary 1, \S 14, Theorem 31]{Z-SII}).
\end{proof}	

\begin{Rem} \label{Rem_Diferencia} In the following lines we give a few indications on how the proof of 
  Theorem~\ref{principal} was addressed in \cite[\S 6.3]{BEP2}.
With the same  notation as in the proof of Theorem~\ref{arco_divisorial},
in \cite{BEP2} we only worked with the finite map $S\to B$. Then we showed that for any arc $\varphi'\in \L(X', {\xi'})$, inducing an arc  $\tilde{\varphi}'\in \L(\Vd,{\beta(\xi')})$, we obtained the inequality (\ref{desigualdad_i}) by using (\ref{r_formula}) and the properties of the order function in $S$.
This way, finding an arc giving the equality (\ref{igualdad_divisorial_i}), required more work:
\begin{itemize}
\item First, given an element $gW^b\in\mathcal{G}^{(d)}$ such that $\nu_{\beta(\xi')}(g)=a$, we needed to find an arc $\phi\in\L(\Vd,\beta(\xi'))$  such that $\nu_t(\phi(g))=a\nu_t(\mathfrak{m}_{\beta(\xi')})$. To this end we worked at the graded ring of the local ring $S_{{\mathfrak m}_{\beta(\xi')}}$ and at its  completion.
Here an \'etale extension of the base field may have to be considered.

\item   In principle we did not know much about the arc $\phi$, except that it defined a semi-valuation
(a valuation on a closed subvariety of $\Vd$) dominated by a finite number of semi-valuations on $X'$.

\item Finally it was shown that any of those semi-valuations gave us arcs fulfilling equality (\ref{igualdad_divisorial_i}). 
\end{itemize}
The key point in the proof of Theorem~\ref{arco_divisorial} is the use of the commutative diagram~(\ref{diagrama_normal}).
Given a finite morphism $S\to B$ such a commutative diagram only exists under very special conditions.
Normalized blowing ups were not considered in  \cite{BEP2}.
\end{Rem}

 
\begin{Prop} \label{proposicion_abierto}
Let $X$ be a $d$-dimensional algebraic   variety defined over a perfect field $k$,   and let $\xi\in \Mm$. 
Suppose there is some $s\geq 1$  and an arc $\varphi_0\in \text{Cont}^{=s}({\mathfrak m}_{\xi})$ with $\overline{r}_{X,{\varphi_0}}=\ord_{\xi}^{(d)}(X)$.
Then there is a non-empty open subset ${\mathfrak W}$ of $\text{Cont}^{ \geq s}({\mathfrak m}_B)$,
containing $\varphi_0$,
such that for all arcs $\varphi \in {\mathfrak W}$,  $\overline{r}_{X,\varphi}=\ord_{\xi}^{(d)}(X)$.

If, in addition,  the generic point of $\varphi_0$   is not contained in $\Sing (X)$  and the characteristic of $k$ is zero, then  there are fat (divisorial) arcs in ${\mathfrak W}$.
\end{Prop}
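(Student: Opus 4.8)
The plan is to carry out the bulk of the argument in an \'etale neighborhood $\mu\colon X'\to X$ of $\xi$ of the shape fixed in \ref{setting}: $X'=\Spec(B)$, $\mu(\xi')=\xi$, and, up to integral closure, $\Gd=S[IW^b]$, $\G_{X'}=B[(IB)W^b]$ and ${\mathbb V}(IB)=\Mmp$. Write $a=\nu_{\beta(\xi')}(I)$, so $\ord^{(d)}_X(\xi)=\ord_{\beta(\xi')}\Gd=a/b$. Since $\mu$ is unramified at $\xi'$, $\mathfrak m_\xi\mathcal O_{X',\xi'}=\mathfrak m_{\xi'}$, so $\mu_\infty$ sends $\Cont^{\ge s}(\mathfrak m_{\xi'})$ into $\Cont^{\ge s}(\mathfrak m_\xi)$ and $\Cont^{=s}(\mathfrak m_{\xi'})$ into $\Cont^{=s}(\mathfrak m_\xi)$, preserving the order at the maximal ideal; being \'etale it is open (Remark \ref{rem:lifting_arc_etale_open}), hence it restricts to an open map between these contact loci, and it preserves $\overline r$ by Remark \ref{rho_etale} and (\ref{r_etale}). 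Thus it suffices to build the open set inside $\Cont^{\ge s}(\mathfrak m_{\xi'})$ and push it forward.

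By (\ref{r_escrito_n}) together with $\G_{X'}=B[(IB)W^b]$, for any $\psi'\in\L(X',\xi')$ whose generic point is not in ${\mathbb V}(IB)$ one has
\[
\overline r_{X',\psi'}=\frac{\ord_t(\psi'(\G_{X'}))}{\nu_t(\psi'(\mathfrak m_{\xi'}))}=\frac{\nu_t(\psi'(IB))}{b\,\nu_t(\psi'(\mathfrak m_{\xi'}))},
\]
while Theorem \ref{principal} (equivalently, (\ref{desigualdad_etale})) gives $\overline r_{X',\psi'}\ge a/b$, i.e.\ $\nu_t(\psi'(IB))\ge a\,\nu_t(\psi'(\mathfrak m_{\xi'}))$, for every $\psi'\in\L(X',\xi')$. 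Choose a lift $\varphi_0'\in\L(X',\xi')$ of $\varphi_0$ (Remark \ref{rem:lifting_arc_etale}); then $\nu_t(\varphi_0'(\mathfrak m_{\xi'}))=s$ and $\overline r_{X',\varphi_0'}=\overline r_{X,\varphi_0}=a/b$, which forces $\nu_t(\varphi_0'(IB))=as$. Put
\[
\mathfrak W':=\Cont^{=s}(\mathfrak m_{\xi'})\setminus\Cont^{\ge as+1}(IB).
\]
Since $\Cont^{=s}(\mathfrak m_{\xi'})=\Cont^{\ge s}(\mathfrak m_{\xi'})\setminus\Cont^{\ge s+1}(\mathfrak m_{\xi'})$ is open in $\Cont^{\ge s}(\mathfrak m_{\xi'})$ and $\Cont^{\ge as+1}(IB)$ is closed, $\mathfrak W'$ is open in $\Cont^{\ge s}(\mathfrak m_{\xi'})$ and contains $\varphi_0'$. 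For any $\psi'\in\mathfrak W'$ we have $\nu_t(\psi'(\mathfrak m_{\xi'}))=s$ and $\nu_t(\psi'(IB))\le as$; combined with the reverse inequality above this forces $\nu_t(\psi'(IB))=as$, whence $\overline r_{X',\psi'}=as/(bs)=a/b$. Then $\mathfrak W:=\mu_\infty(\mathfrak W')$ is a non-empty open subset of $\Cont^{\ge s}(\mathfrak m_\xi)$ (contained in $\Cont^{=s}(\mathfrak m_\xi)$), it contains $\varphi_0$, and $\overline r_{X}\equiv a/b=\ord^{(d)}_X(\xi)$ on it. This proves the first assertion.

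For the second assertion assume $\characteristic k=0$ and that the generic point of $\varphi_0$ (equivalently, of $\varphi_0'$) is not contained in $\Sing$. The only mechanism needed is that $\mathfrak W'$ is open in $\Cont^{\ge s}(\mathfrak m_{\xi'})$ and contains $\varphi_0'$: if $\Psi'\in\Cont^{\ge s}(\mathfrak m_{\xi'})$ satisfies $\varphi_0'\in\overline{\{\Psi'\}}$, then $\Psi'\in\mathfrak W'$, because an open set meeting $\overline{\{\Psi'\}}$ must contain $\Psi'$. So it is enough to realise $\varphi_0'$ as a specialization of a fat divisorial arc lying in $\Cont^{=s}(\mathfrak m_{\xi'})$. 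Using resolution of singularities (here $\characteristic k=0$ enters), choose $\Pi\colon Y\to X'$ which is simultaneously a resolution of $X'$ and a log-resolution of $\mathfrak m_{\xi'}$, with $\mathfrak m_{\xi'}\mathcal O_Y=I(H_1)^{c_1}\cdots I(H_N)^{c_N}$ and $\sum_iH_i$ a simple normal crossings divisor. The generic point of $\varphi_0'$ lies in $\Reg(X')\setminus\{\xi'\}$, where $\Pi$ is an isomorphism, so $\varphi_0'$ lifts to $\widetilde\varphi_0\in\L(Y)$ with $\Pi_\infty(\widetilde\varphi_0)=\varphi_0'$. With $\nu_i:=\nu_t(\widetilde\varphi_0(I(H_i)))$, the arc $\widetilde\varphi_0$ lies in the multi-contact locus $\Cont^{\nu}(\sum_iH_i)\subseteq\L(Y)$; on the smooth $Y$ this locus is a union of maximal divisorial sets whose generic points are fat divisorial arcs (cf.\ \cite{E_L_M}, Theorem \ref{ThUnionCont} and the discussion around it, \cite{deF_E_I}, \cite{I08}). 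Let $\Psi_Y$ be the generic point of the component containing $\widetilde\varphi_0$ and set $\Psi':=\Pi_\infty(\Psi_Y)$. Then $\Psi'$ is a fat divisorial arc over $X'$ (push-forward of a divisorial valuation by a proper birational morphism, \cite[Proposition 3.4]{I08}), it satisfies $\nu_t(\Psi'(\mathfrak m_{\xi'}))=\sum_ic_i\nu_i=\nu_t(\varphi_0'(\mathfrak m_{\xi'}))=s$, hence $\Psi'\in\Cont^{=s}(\mathfrak m_{\xi'})\subseteq\Cont^{\ge s}(\mathfrak m_{\xi'})$, and $\varphi_0'=\Pi_\infty(\widetilde\varphi_0)\in\overline{\{\Psi'\}}$ by continuity of $\Pi_\infty$ (as $\widetilde\varphi_0\in\overline{\{\Psi_Y\}}$). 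By the mechanism above, $\Psi'\in\mathfrak W'$, and therefore $\mu_\infty(\Psi')$ is a fat divisorial arc belonging to $\mathfrak W$ (fatness and divisoriality pass through the dominant, generically finite morphism $\mu_\infty$, exactly as in the last lines of the proof of Theorem \ref{arco_divisorial}).

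The routine part is the topological bookkeeping of the first two paragraphs. The one step where the hypotheses genuinely intervene is the third paragraph: characteristic zero supplies the log-resolution and the lift $\widetilde\varphi_0$, and the assumption that the generic point of $\varphi_0$ avoids $\Sing(X)$ is what makes that lift exist and places it in an honest multi-contact locus; these together exhibit $\varphi_0'$ as a specialization of a fat divisorial arc of the \emph{same} order at $\mathfrak m_{\xi'}$, after which openness of $\mathfrak W'$ does everything. I expect the main obstacle to be exactly making this last construction clean (including the verification that the pushed-forward arc remains divisorial and keeps $\overline r=\ord^{(d)}_X(\xi)$), since the value $\overline r$ is only locally constant on the slice $\Cont^{=s}(\mathfrak m_{\xi'})$, which is why the divisorial arc must be produced with the prescribed order $s$.
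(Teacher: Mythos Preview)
Your proof is correct and follows essentially the same approach as the paper. For the first assertion, both you and the paper pass to the \'etale neighborhood $X'=\Spec(B)$ of \ref{setting} and take $\mathfrak W'$ to be the complement of $\Cont^{\geq as+1}(IB)$; your use of $\Cont^{=s}(\mathfrak m_{\xi'})$ versus the paper's $\Cont^{\geq s}(\mathfrak m_{\xi'})$ is immaterial, since the paper's own chain of inequalities $sa\leq ha\leq\nu_t(\varphi'(IB))<sa+1$ shows the two definitions yield the same set. For the second assertion the paper simply defers to Theorem~\ref{ThExistDivFat} (proved in Section~\ref{valores}), whose proof is exactly the log-resolution plus multi-contact locus construction you have inlined here; your version is marginally more economical in that you only log-resolve $\mathfrak m_{\xi'}$ (together with a resolution of $X'$) and then let the openness of $\mathfrak W'$ in $\Cont^{\geq s}(\mathfrak m_{\xi'})$ force $\Psi'\in\mathfrak W'$, whereas the paper's Theorem~\ref{ThExistDivFat} simultaneously log-resolves $IB$ so as to recompute $\bar r_{X',\Psi'}$ directly from the exponents.
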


\begin{proof}
The statement is local, so we can assume that $X$ is an affine algebraic variety over $k$.
First we prove that the theorem holds in a suitably chosen  \'etale neighborhood of $X$, $\mu: X'\to X$ with $\mu(\xi')=\xi$.
Thus,  we will set  $X'=\Spec(B)$, and  we will be considering the finite morphism $\beta: X'\to \Vd$ with $\Vd=\Spec(S)$ a smooth $k'$-algebra as in \ref{setting}.

Now, suppose that, up to integral closure, $\mathcal{G}^{(d)}=S[IW^b]$   for some ideal $I\subset S$ (see (\ref{salvo_entera})).
Then, if $\ord_{\beta(\xi')}\mathcal{G}^{(d)}=\frac{a}{b}$, one has that $\nu_{\beta(\xi')}(I)=a$.

In $\mathcal{L}(X',\xi')$, set 
\begin{equation*}
\mathfrak{W}'={\Cont}^{\geq s}({\mathfrak m}_{\xi'})\setminus {\Cont}^{\geq as+1}(IB).
\end{equation*}
We claim that if $\varphi'\in {\mathfrak W}'$, then $$\overline{r}_{X',\varphi'}=\frac{a}{b}=\ord^{(d)}_X(\xi).$$ 
Indeed, since  $\varphi'\in {\mathfrak W}'$, we have that $\nu_t(\varphi'({\mathfrak m}_{\xi'}))=h\geq s$. If  $\beta_{\infty}(\varphi')=\tilde{\varphi}'$ denotes the   arc induced on   $\Vd$ by $\varphi'$, one has that $\nu_t(\tilde{\varphi}'({\mathfrak m}_{\beta(\xi')}))=h\geq s$, which implies that 
$$sa\leq ha\leq \nu_t(\tilde{\varphi}'(I))=\nu_t(\varphi'(IB))<sa+1.$$
Thus, necessarilly,   $s=h= \nu_t(\tilde{\varphi}'({\mathfrak m}_{\beta(\xi')}))=\nu_t(\varphi'({\mathfrak m}_{\xi'}))$, and 
\begin{equation}
\label{igualdad_abierto_etale}
\overline{r}_{X',\varphi'}=\frac{\ord_t(\varphi'(\G_{X'}))}{\nu_t(\varphi'({\mathfrak m}_{\xi'}))}= \frac{\frac{\nu_t(\varphi'(IB))}{b}}{s}=\frac{sa}{sb}=\frac{a}{b}.
\end{equation}
To finish, the proof above shows that, after considering an \'etale morphism $\mu: X'\to X$ with $\mu(\xi')=\xi$,  there is an  open subset ${\mathfrak W}'$  of  $ \Cont^{\geq s}(\mathfrak{m}_{\xi'})$   where the equality (\ref{igualdad_abierto_etale}) holds. Now, by Remark \ref{rem:lifting_arc_etale_open},
the morphism $\mu_{\infty}$ is open, and 
 $\mu_{\infty}({\mathfrak W}')= {\mathfrak W} \subseteq \Cont^{\geq s}(\mathfrak{m}_\xi)$ is an open subset of $\Cont^{\geq s}(\mathfrak{m}_\xi)$ where the statement of the theorem  holds (see Remark \ref{rho_etale}).

The last statement of the Proposition follows from Theorem \ref{ThExistDivFat},
which we postpone to section~\ref{valores}. 
\end{proof}

\begin{Prop} \label{corolario_abierto}
Let $X$ be a $d$-dimensional algebraic   variety defined over a perfect field $k$ and  let $\xi\in \Mm$.
Then for every  $n\geq 1$  and every $c_i$, $i=1,\ldots, \ell$,  there is a non-empty open set 
${\mathfrak U}_{nc_i}\subseteq\Cont^{\geq nc_i}(\mathfrak{m}_\xi)$  such that for all
$\varphi \in {\mathfrak U}_{nc_i}$, $\overline{r}_{X,\varphi}=\ord_{X}^{(d)}(\xi)$.
\end{Prop}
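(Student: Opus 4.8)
The plan is to reduce Proposition \ref{corolario_abierto} to Proposition \ref{proposicion_abierto} by exhibiting, for each $c_i$ from (\ref{def_c_i}) and each $n\geq 1$, a single arc $\varphi_0\in\Cont^{=nc_i}(\mathfrak{m}_\xi)$ with $\overline{r}_{X,\varphi_0}=\ord^{(d)}_X(\xi)$, and then invoking the open-set statement of Proposition \ref{proposicion_abierto} with $s=nc_i$. So the entire content of the proof is the construction of this witness arc of order exactly $nc_i$.

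First I would pass, as in all the proofs in this section, to an \'etale neighborhood $\mu\colon X'=\Spec(B)\to X$ with $\mu(\xi')=\xi$ as in \ref{setting}, so that we have the finite projection $\beta\colon X'\to V^{(d)}=\Spec(S)$, the elimination algebra $\mathcal{G}^{(d)}=S[IW^b]$ (up to integral closure), $\mathcal{G}_{X'}=B[(IB)W^b]$, and $\nu_{\beta(\xi')}(I)=a$ where $\ord^{(d)}_X(\xi)=a/b$; by Remark \ref{nuevo_c} the multiset $\{c_1,\ldots,c_\ell\}$ is unchanged under this \'etale base change, and the normalized blow-up diagram (\ref{diagrama_normal}) is available. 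Then I would use the divisorial arcs $\eta'_i$ of (\ref{ejemplo_divisorial}) attached to the components $H'_i$ of the log-resolution of $\mathfrak{m}_{\xi'}$: by the computation already carried out inside the proof of Theorem \ref{arco_divisorial} one has $\nu_t(\eta'_i(\mathfrak{m}_{\xi'}))=c'_i$ and $\nu_t(\eta'_i(IB))=a c'_i$, hence $\overline{r}_{X',\eta'_i}=a/b=\ord^{(d)}_X(\xi)$. Finally, composing with the power map $i_n\colon K'_i[[t]]\to K'_i[[t]]$, $t\mapsto t^n$, gives $\eta'_{i,n}$ (notation (\ref{arco_potencia})) with $\nu_t(\eta'_{i,n}(\mathfrak{m}_{\xi'}))=nc'_i$ and $\nu_t(\eta'_{i,n}(IB))=nac'_i$, so that $\eta'_{i,n}\in\Cont^{=nc'_i}(\mathfrak{m}_{\xi'})$ and still $\overline{r}_{X',\eta'_{i,n}}=a/b$ (the normalized $\mathbb{Q}$-persistance is insensitive to reparametrization, which is precisely the content of (\ref{rho_r}) and the way $\overline{r}$ is defined). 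Pushing forward to $X$ by the open map $\mu_\infty$ (Remark \ref{rem:lifting_arc_etale_open}) and using Remark \ref{rho_etale} to identify $\overline{r}_{X}$ with $\overline{r}_{X'}$ along liftings, I obtain an arc $\varphi_0\in\Cont^{=nc_i}(\mathfrak{m}_\xi)$ with $\overline{r}_{X,\varphi_0}=\ord^{(d)}_X(\xi)$.

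With this witness in hand, Proposition \ref{proposicion_abierto} applied with $s=nc_i$ yields a non-empty open subset $\mathfrak{U}_{nc_i}\subseteq\Cont^{\geq nc_i}(\mathfrak{m}_\xi)$ containing $\varphi_0$ on which $\overline{r}_{X,\varphi}=\ord^{(d)}_X(\xi)$, which is exactly the assertion. One mild subtlety to check is the matching of indices between the $c_i$ living on $\overline{X_1}$ and the $c'_i$ living on $\overline{X'_1}$: Remark \ref{nuevo_c} guarantees the two multisets coincide, so for each $c_i$ there is a component $H'_j$ with $c'_j=c_i$, and one uses $\eta'_{j,n}$; this is the only place where a little care is needed, and it is already settled by the cited remark. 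The main (minor) obstacle is therefore bookkeeping rather than substance — everything quantitative is a direct consequence of the valuation computations in the proof of Theorem \ref{arco_divisorial} together with the openness statement of Proposition \ref{proposicion_abierto}. I would also remark in passing that for those $c_i$ equal to $t_0$ (or more generally $c=\min c_i$), specializing $n=1$ gives an open subset of $\Cont^{\geq t_0}(\mathfrak{m}_\xi)=\mathcal{L}(X,\xi)$, which is the geometric point of the statement.
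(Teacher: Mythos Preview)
Your proof is correct and follows essentially the same route as the paper's: both arguments reduce to the \'etale neighborhood of \ref{setting}, use the divisorial arc $\eta'_{i,n}$ from (\ref{arco_potencia}) as the witness with $\nu_t(\eta'_{i,n})=nc'_i$ and $\overline{r}_{X',\eta'_{i,n}}=a/b$, and then produce the open set $\Cont^{\geq nc'_i}(\mathfrak{m}_{\xi'})\setminus\Cont^{\geq nc'_ia+1}(IB)$. The only difference is presentational---you invoke Proposition~\ref{proposicion_abierto} as a black box to obtain the open set, whereas the paper writes the open set down explicitly (it is the same $\mathfrak{W}'$ appearing in that proof with $s=nc'_i$) and checks non-emptiness via $\eta'_{i,n}$; your version is arguably the cleaner packaging.
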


\begin{proof} After the proof of Proposition \ref{proposicion_abierto}, it suffices to prove the statement at an \'etale neighborhood of $\xi$.   We use the notation and the construction of 
  Remark \ref{nuevo_c}:   let $\overline{\pi}: \overline{X'_1}\to X'$ be the normalized blow up of $X'$ at $\xi'$, which induces the  commutative diagram (\ref{diagrama_normal}) of blow ups and finite morphisms. 
	
As in the proof of Theorem \ref{arco_divisorial},  we denote by  ${E}$  the exceptional divisor of $\tilde{\pi}$.  
Then ${\mathfrak m}_{\beta(\xi')}{\mathcal O}_{V^{(d)}_1}={\mathcal I}({E})$, and
$${\mathfrak m}_{\xi'}{\mathcal O}_{\overline{X'_1}}={\mathcal I}({E}){\mathcal O}_{\overline{X'_1}}. $$

After blowing up at ${\mathfrak m}_{\beta(\xi')}$,  $I{\mathcal O}_{\Vd_1}={\mathcal I}(E)^aJ$, for some sheaf of ideals $J\nsubseteq {\mathcal I}(E)$, and therefore, $\beta^*(I) {\mathcal O}_{\overline{X'_1}}={\mathcal I}(E)^aJ'$.   
Set 
$$\mathfrak{U}'_{nc'_i}:=\Cont^{\geq nc'_i}(\mathfrak{m}_{\xi'})\setminus \Cont^{\geq nc'_ia+1}(IB),$$ 
Observe that ${\mathfrak U}'_{nc'_i}$ is non-empty since  the arc $\eta'_{i,n}$  
from (\ref{arco_potencia})
belongs to  ${\mathfrak U}'_{nc'_i}$. 
\end{proof}

\section{Fat irreducible components of contact loci and Hironaka's order} \label{irreducible_Hironaka}

In the previous section we proved that given a $d$-dimensional  algebraic variety $X$ defined over a perfect field $k$, and a point of maximum multiplicity $\xi\in \Mm$,
there are locally open sets in $\L(X, {\xi})$ where the value of the normalized ${\mathbb Q}$-persistance,
$\overline{r}_{X}$, is constant and equal to the value of  
Hironaka's order at the point $\xi$, $\ord^{(d)}_{X}(\xi)$ (Proposition \ref{proposicion_abierto}).
In fact such open subsets exist for some contact sets $\Cont^{\geq nc_i}(\mathfrak{m}_{\xi})$ (see Propostion  \ref{corolario_abierto}).

In this section we will prove that the value $\ord^{(d)}_{X}(\xi)$   can be read by means of the    ${\mathbb Q}$-persistance  of some of  the irreducible (fat) components of $\Cont^{\geq s}(\mathfrak{m}_{\xi})$
for some values of $s$ (see Theorem \ref{componentes}).
It is natural to ask whether a similar statement holds for the irreducible components of  
 $\Cont^{\geq m}(\mathfrak{m}_{\xi})$  for any $m\in {\mathbb N}$, but   Example \ref{ejemplo_no} already  illustrates that this is not the case. However  we will show that  the value of Hironaka's order function at $\xi$ is obtained asymptotically by  looking at  the irreducible components of $\Cont^{\geq m}(\mathfrak{m}_{\xi})$ when  $m$ goes to infinity. This is the content of Theorem \ref{componentes_limite}.

\begin{Thm} \label{componentes}
Let $X$ be a $d$-dimensional  algebraic  variety defined over a perfect field $k$, let $\xi\in \Mm$, and let  $\{T_{\lambda_m}\}_{\lambda_m\in \Lambda_m}$  
be the fat irreducible components of   $\text{Cont}^{\geq m}({\mathfrak m}_{\xi})$, 
with generic points  $\{\Psi_{\lambda_m}\}_{\lambda_m\in \Lambda_m}$ for $m\geq 1$.   If $m=nc_i$ for some $n\geq 1$ and some $c_i$ as in (\ref{def_c_i})  then 
$$\ord_X^{(d)}(\xi) =\min \{\overline{r}_{X,\Psi_{{\lambda}_m}}: \lambda_m\in \Lambda_m\}.$$ 
In addition, if $k=\mathbb{C}$ then the minimum is achieved at the generic point of a maximal divisorial set.

\end{Thm}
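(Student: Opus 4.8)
The statement is local, so I may assume $X$ is affine, and the plan is to prove the two inequalities separately. The inequality $\ord_X^{(d)}(\xi)\le \min\{\overline r_{X,\Psi_{\lambda_m}}\mid \lambda_m\in\Lambda_m\}$ is immediate from Theorem \ref{principal}: since $m\ge 1$ we have $\Cont^{\ge m}(\mathfrak m_\xi)\subseteq \Cont^{\ge 1}(\mathfrak m_\xi)=\L(X,\xi)$, so every generic point $\Psi_{\lambda_m}$ is an arc centered at $\xi$, whence $\ord_X^{(d)}(\xi)\le \overline r_{X,\Psi_{\lambda_m}}$ for each $\lambda_m$, and one takes the minimum.

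For the reverse inequality I would use the hypothesis $m=nc_i$. By Proposition \ref{corolario_abierto} there is a non-empty open subset $\mathfrak U_m\subseteq\Cont^{\ge m}(\mathfrak m_\xi)$ on which $\overline r_X$ is constantly equal to $\ord_X^{(d)}(\xi)$. The crucial point is that $\mathfrak U_m$ meets a \emph{fat} irreducible component. To see this I would trace through the construction in the proof of Proposition \ref{corolario_abierto}: after passing to the \'etale neighborhood $\mu: X'\to X$, the set $\mathfrak U_m$ is $\mu_\infty(\mathfrak U'_{nc'_i})$ for a set $\mathfrak U'_{nc'_i}$ that by construction contains the arc $\eta'_{i,n}$ of (\ref{arco_potencia}) (here $c'_i=c_i$, using $\{c_1,\dots,c_\ell\}=\{c'_1,\dots,c'_{\ell'}\}$ from Remark \ref{nuevo_c}); this $\eta'_{i,n}$ is the reparametrization $t\mapsto t^n$ of the divisorial arc $\eta'_i$, so its valuation is $nc'_i\cdot\mathrm{val}_{H'_i}$, hence $\eta'_{i,n}$ is divisorial, in particular fat, and its image $\varphi_1:=\mu_\infty(\eta'_{i,n})\in\mathfrak U_m$ is again fat since $\mu$ is dominant \'etale. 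Now $\varphi_1$ lies in some irreducible component $T$ of $\Cont^{\ge m}(\mathfrak m_\xi)$; if $T$ were thin its generic point would factor through a proper closed subscheme $Z\subsetneq X$, forcing $T\subseteq\L(Z)$ and hence $\varphi_1\in\L(Z)$, contradicting fatness of $\varphi_1$. Thus $T=T_{\lambda_m}$ is fat, and $\mathfrak U_m\cap T_{\lambda_m}$ is a non-empty open subset of the irreducible space $T_{\lambda_m}$, so it contains the generic point $\Psi_{\lambda_m}$; therefore $\overline r_{X,\Psi_{\lambda_m}}=\ord_X^{(d)}(\xi)$. This proves $\min\{\overline r_{X,\Psi_{\lambda_m}}\}\le \ord_X^{(d)}(\xi)$ and also shows the minimum is attained, so that writing $\min$ rather than $\inf$ is legitimate regardless of the cardinality of $\Lambda_m$. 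Combining with the first inequality gives the equality.

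For the last assertion, when $k=\mathbb C$ I would invoke Proposition \ref{maximal_divisorial}: every fat irreducible component of $\Cont^{\ge m}(\mathfrak m_\xi)$ is a maximal divisorial set, so the component $T_{\lambda_m}$ produced above is maximal divisorial, its generic point $\Psi_{\lambda_m}$ is a divisorial arc, and equality (\ref{igualdad_divisorial_i}) holds at it — the second equality in (\ref{igualdad_divisorial_i}) being just Definition \ref{q_persistencia}. The step I expect to be the main obstacle is precisely showing that $\mathfrak U_m$ meets a fat component and not merely thin ones: over a general perfect field, finiteness of $\overline r_X$ on $\mathfrak U_m$ does not by itself force its arcs to be fat, so one must carry the explicit divisorial (hence fat) arc through the construction of $\mathfrak U_m$. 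This also explains why the hypothesis $m=nc_i$ cannot be dropped — it is exactly the range in which Proposition \ref{corolario_abierto} supplies such an open set — and, as noted, Example \ref{ejemplo_no} shows the conclusion can fail for general $m$.
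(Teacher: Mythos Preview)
Your proof is correct and reaches the same conclusion, but the mechanism for the reverse inequality differs from the paper's. You package Proposition~\ref{corolario_abierto} as a black box: the open set $\mathfrak U_m\subseteq\Cont^{\ge m}(\mathfrak m_\xi)$ contains the fat arc $\varphi_1=\mu_\infty(\eta'_{i,n})$, hence meets a fat irreducible component $T_{\lambda_m}$, and since any non-empty open in an irreducible space contains the generic point you get $\Psi_{\lambda_m}\in\mathfrak U_m$ and thus $\overline r_{X,\Psi_{\lambda_m}}=\ord_X^{(d)}(\xi)$ directly. The paper instead works with the specific arc $\eta_{i,n}$ and its generization $\Psi_\lambda$: it lifts $\Psi_\lambda$ to $X'$ via Lemma~\ref{Lema_Etale_Arco}, checks by a sandwich argument that $\nu_t(\Psi_\lambda)=\nu_t(\eta_{i,n})=nc_i$, and then invokes Lemma~\ref{r_semicont} (the semicontinuity of $\overline r_X$ under specialization of arcs of equal order) together with Theorem~\ref{arco_divisorial} to conclude $\overline r_{X,\Psi_\lambda}=a/b$. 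Your topological route is cleaner and avoids both Lemma~\ref{r_semicont} and the explicit order comparison; the paper's route is more self-contained in that it does not rely on the openness of $\mu_\infty$ at the level of the contact loci (which is used to transport $\mathfrak U'_{nc'_i}$ down to an open $\mathfrak U_m$). Both approaches identify the same fat component and both invoke Proposition~\ref{maximal_divisorial} for the final assertion over $\mathbb C$.
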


\begin{proof} 
The statement is local, so we can assume that $X$ is an affine algebraic variety over $k$. First we chose a suitable \'etale neighborhood of $X$, $\mu: X'\to X$ with $\mu(\xi')=\xi$,  so that setting $X'=\Spec(B)$,  there is a finite morphism $\beta: X'\to \Vd$ with $\Vd=\Spec(S)$ a smooth $k'$-algebra in the same situation as in \ref{setting}.  Now we use the same notation and constructions as in Remark \ref{nuevo_c}. So let $\overline{\pi}: \overline{X'_1}\to X'$ be the normalized blow up of $X'$ at $\xi'$, which induces a commutative diagram of blow ups at finite morphisms as in (\ref{diagrama_normal}).

For a given $i\in \{1,\ldots, \ell\}$, we may assume after reordering that $c'_i=c_i$.
Now consider the arcs $\eta'_{i,n}$ as in (\ref{arco_potencia}).  
After the proof of Proposition \ref{corolario_abierto}, $\overline{r}_{X',\eta'_{i,n}}=\ord_X^{(d)}(\xi)$.
 
 Now define $\eta_{i,n}\in\mathcal{L}(X, {\xi})$ as  the arc obtained composing $\eta'_{i,n}$ with the \'etale morphism $\mu: X'\to X$, i.e., $\mu_{\infty}(\eta'_{i,n})=\eta_{i,n}$.
Since $\eta_{i,n}\in\Cont^{\geq nc_i}(\mathfrak{m}_{\xi})$ is fat then it belongs to some fat irreducible component of
$\Cont^{\geq nc_i}(\mathfrak{m}_{\xi})$, which we denote by  $ T_{\lambda}$,  for some $\lambda\in \Lambda_{nc_i}$, with generic point  $\Psi_{\lambda}$. Then 
notice that $\nu_t(\eta_{i,n})\geq \nu_t(\Psi_{\lambda})$, but in   fact these two numbers   are equal: by  Lemma \ref{Lema_Etale_Arco}, there is an arc $\Psi'\in\Cont^{\geq nc_i}(\mathfrak{m}_{\xi'})$ such that
$\mu_{\infty}(\Psi')=\Psi_{\lambda}$ and so that $\eta'_{i,n}\in \overline{\{\Psi'\}}$. Then 
$$nc_i=\nu_t(\eta'_{i,n})\geq \nu_t(\Psi')\geq nc_i.$$
And  the claim follows because $\mu: X'\to X$ is \'etale, and hence $\nu_t(\Psi_{\lambda})=\nu_t(\Psi')(=\nu_t(\eta'_{i,n})=\nu_t(\eta_{i,n})$). 

Since $\eta_{i,n}\in \overline{\{\Psi\}}$, by Lemma \ref{r_semicont} we have
\begin{equation*}
\frac{a}{b}=\overline{r}_{X,\eta_{i,n}}\geq  \overline{r}_{X,\Psi_{\lambda}}\geq\frac{a}{b},
\end{equation*}
where last inequality follows from Theorem \ref{arco_divisorial}. 

Finally, for $k=\mathbb{C}$ the last statement of the theorem follows from \cite[Proposition 2.12]{deF_E_I} 
(see also Proposition \ref{maximal_divisorial} in this paper).
\end{proof}

 The following example shows that the result in Theorem \ref{componentes} may not hold for the irreducible components of $\Cont^{\geq n}({\mathfrak m}_{\xi})$ for arbitrary values of  $n$. 

\begin{Ex} \label{ejemplo_no}  
Let $X$ be the hypersurface of $\mathbb{A}^3_{k}$ given by $x^2y^3-z^6=0$, with $k$ a field of characteristic zero.
Consider the contact sets $\Cont^{\geq n}({\mathfrak m}_{\xi})$ where ${\mathfrak m}_{\xi}$ is the maximal ideal of $X$ at the point $\xi =0$. For any $n\geq 11$ such that $2\nmid n$ and $3\nmid n$, any fat irreducible component of $\Cont^{\geq n}({\mathfrak m}_{\xi})$ gives a $\mathbb{Q}$-persistance strictly greater than $\ord_X^{(2)}(\xi)=1$ at its generic point. This can be computed using (\ref{UnionCont}), via the blow up of $X$ at $\xi$,
$\Pi:X_1\to X$, which gives a log-resolution of ${\mathfrak m}_{\xi}$.
We have that $\mathfrak{m}_{\xi }\mathcal{O}_{{X_1}}=I(H_1)^{2}\cdot I(H_2)^{3}$,
where $H_1$, $H_2$ are the irreducible components of excepcional divisor, according to the notation in the previous theorem.

\vspace{0.2cm}

To see this, denote
$$C_{\alpha,\beta}=
\overline{\Pi_{\infty}\left(\Cont^{(\alpha,\beta)}\left(E\right)\right)}\subset \mathcal{L}(X,\xi)$$
where the $\overline{\{\ \}}$ denotes the Zariski closure.
Let $\Psi_{\alpha,\beta}$ be the generic point of $C_{\alpha,\beta}$.

Note that the Rees algebra $\mathcal{G}_X\subset B[W]$ is generated by $xW$, $yW$ and $z^6W^5$,
  up to integral closure. 
By (\ref{UnionCont}), we have that $C_{\alpha,\beta}\subset\Cont^{\geq n}({\mathfrak m}_{\xi})$ if and only if $2\alpha+3\beta\geq n$. If $v_{\Psi_{\alpha,\beta}}$ is the valuation associated to the arc $\Psi_{\alpha,\beta}$ then it can be checked that
$v_{\Psi_{\alpha,\beta}}(x)=3\alpha+3\beta$, $v_{\Psi_{\alpha,\beta}}(y)=2\alpha+4\beta$ and $v_{\Psi }(z)=2\alpha+3\beta$
and the ${\mathbb Q}$-persistance is
$$\overline{r}_{X,\Psi_{\alpha,\beta}}=
\frac{\min\{3\alpha+3\beta,2\alpha+4\beta,\frac{6}{5}(2\alpha+3\beta)\}}{\min\{3\alpha+3\beta,2\alpha+4\beta,2\alpha+3\beta\} }=
\frac{\min\{3\alpha+3\beta,2\alpha+4\beta,\frac{6}{5}(2\alpha+3\beta)\}}{2\alpha+3\beta}.
$$
For any $(\alpha,\beta)$, it follows that $\overline{r}_{X,\Psi_{\alpha,\beta}}>1$ if and only if $\alpha\neq 0$ and $\beta\neq 0$.

Now let $n\geq 11$
be not divisible by $2$ neither by $3$. We want to prove that if $C_{\alpha,\beta}$ is an irreducible component of $\Cont^{\geq n}({\mathfrak m}_{\xi})$ then $\overline{r}_{X,\Psi_{\alpha,\beta}}>1$ (equivalently $\alpha\neq 0$ and $\beta\neq 0$).
Since $X$ is a toric variety, by \cite[Lemma 3.11]{I08} we have that
\begin{equation} \label{EqTorica}
C_{\alpha,\beta}\subset C_{\alpha',\beta'}
\Longleftrightarrow
v_{\Psi_{\alpha,\beta}}\geq v_{\Psi_{\alpha',\beta'}}.
\end{equation}
Assume that $n=2m+1=3l+i$ where $i=1$ or $2$, since $n\geq 11$ we have $m\geq 5$, $l\geq 3$.
Let $C_{\alpha,\beta}$ be an irreducible component of $\Cont^{\geq n}(\mathfrak{m}_{\xi})$ with
$\overline{r}_{X,\Psi_{\alpha,\beta}}=1$. Then either $(\alpha,\beta)=(m+1,0)$ or $(\alpha,\beta)=(0,l+1)$.

If $(\alpha,\beta)=(m+1,0)$ then $C_{m+1,0}\subset C_{m-1,1}$ by (\ref{EqTorica}),
since $v_{\Psi_{m+1,0}}(x)=3m+3\geq v_{\Psi_{m-1,1}}(x)=3m$,
$v_{\Psi_{m+1,0}}(y)=2m+2\geq v_{\Psi_{m-1,1}}(y)=2m+2$ and
$v_{\Psi_{m+1,0}}(z)=2m+2\geq v_{\Psi_{m-1,1}}(z)=2m+1$. In this case $\overline{r}_{X,\Psi_{m-1,1}}=1+\frac{1}{n}$.

If $(\alpha,\beta)=(0,l+1)$ then $C_{0,l+1,}\subset C_{2,l-1}$ in case $n=3l+1$
and $C_{0,l+1,}\subset C_{1,l}$ in case $n=3l+2$.

The first case, $n=3l+1$, comes from the inequalities:
$$\begin{array}{l}
v_{\Psi_{0,l+1}}(x)=3l+3\geq v_{\Psi_{2,l-1}}(x)=3l+3, \\
v_{\Psi_{0,l+1}}(y)=4l+4\geq v_{\Psi_{2,l-1}}(y)=4l, \\
v_{\Psi_{0,l+1}}(z)=3l+3\geq v_{\Psi_{2,l-1}}(z)=3l+1.
\end{array}
$$
Here we have that $\overline{r}_{X,\Psi_{2,l-1}}=1+\frac{2}{n}$.

The second case, $n=3l+2$, comes from the inequalities:
$$\begin{array}{l}
v_{\Psi_{0,l+1}}(x)=3l+3\geq v_{\Psi_{1,l}}(x)=3l+3, \\
v_{\Psi_{0,l+1}}(y)=4l+4\geq v_{\Psi_{1,l}}(y)=4l+2, \\
v_{\Psi_{0,l+1}}(z)=3l+3\geq v_{\Psi_{1,l}}(z)=3l+2.
\end{array}$$
And we have that $\overline{r}_{X,\Psi_{1,l}}=1+\frac{1}{n}$.
\end{Ex}

The previous computations show that if $\{T_{n,\lambda_n}\}_{\lambda_n\in\Lambda_n}$ are the irreducible components of the  set $\Cont^{\geq n}(\mathfrak{m}_{\xi})$ and $\Psi_{n,\lambda_n}$ is the generic point of $T_{n,\lambda_n}$ then,
for $n\geq 11$ with $2\nmid n$ and $3\nmid n$ we have that
$$\delta_n=\min\{\bar{r}_{\Psi_{n,\lambda_n}} \mid \lambda_n\in\Lambda_n\}>1.$$
Hovewer, note that $\lim_{n\to\infty}\delta_n=1=\ord^{(2)}_X(\xi)$, and this is a general fact as it is stated in the following theorem.

\begin{Thm} \label{componentes_limite}
Let $X$ be a $d$-dimensional algebraic variety defined over a perfect field $k$, and   let $\xi\in \Mm$.
For each $m\in {\mathbb N}$, let $\{T_{m,\lambda_m}\}_{\lambda_m\in \Lambda_m}$ be the fat irreducible components of $\Cont^{\geq m}({\mathfrak m}_{\xi})$ and let 
	$\Psi_{m, \lambda_m}$ be the generic point of $T_{m,\lambda_m}$ for    $\lambda_m\in \Lambda_m$.
	For each $m\geq 1$ set:
	\begin{equation*}
	\delta_{m}:=\inf\left\{\bar{r}_{\Psi_{m,\lambda_m}}\mid \lambda_{m} \in \Lambda_m \right\}.
	\end{equation*}
	Then we have that
	\begin{equation*}
	\ord^{(d)}_{X}(\xi)=\lim_{m\to\infty} \delta_m.
	\end{equation*}
\end{Thm}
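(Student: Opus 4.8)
The plan is to trap $\delta_m$ between $\ord^{(d)}_X(\xi)$ and $\ord^{(d)}_X(\xi)\bigl(1+\tfrac{c_i}{m}\bigr)$ for a fixed index $i$, and then let $m\to\infty$. A softer ``monotonicity'' argument is not available: Example~\ref{ejemplo_no} shows that $\{\delta_m\}$ genuinely oscillates strictly above $\ord^{(d)}_X(\xi)$, so one really needs the quantitative upper bound.

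For the lower bound $\delta_m\ge\ord^{(d)}_X(\xi)$ I would invoke the inequality in Theorem~\ref{principal} (reproved along the proof of Theorem~\ref{arco_divisorial}): every arc $\varphi\in\L(X,\xi)$ satisfies $\bar r_{X,\varphi}\ge\ord^{(d)}_X(\xi)$, with the convention $\bar r_{X,\varphi}=\infty$ when the generic point of $\varphi$ lies in $\Mm$. Applying this to each generic point $\Psi_{m,\lambda_m}$ and taking the infimum gives $\delta_m\ge\ord^{(d)}_X(\xi)$ for every $m$; in particular $\delta_m$ is finite as soon as $\Cont^{\ge m}(\mathfrak m_\xi)$ has one fat component with finite $\bar r$, which the next step produces.

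For the upper bound I would argue, as in the proofs of Theorem~\ref{arco_divisorial} and Proposition~\ref{corolario_abierto}, in an étale neighborhood $\mu\colon X'=\Spec(B)\to X$ as in \ref{setting}, with finite $\beta\colon X'\to V^{(d)}=\Spec(S)$, and $\mathcal G^{(d)}=S[IW^b]$, $\mathcal G_{X'}=B[(IB)W^b]$ up to integral closure, so that $\ord^{(d)}_X(\xi)=a/b$ with $a=\nu_{\beta(\xi')}(I)$. Fix $i$ and the divisorial arc $\eta_i'$ of Remark~\ref{nuevo_c}, for which $\nu_t(\eta_i'(\mathfrak m_{\xi'}))=c_i$ and $\nu_t(\eta_i'(IB))=ac_i$ (proof of Theorem~\ref{arco_divisorial}). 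Given $m\ge 1$, set $n=\lceil m/c_i\rceil$ and consider $\eta'_{i,n}=\eta_i'\circ i_n$ as in (\ref{arco_potencia}): it is a fat arc of order $nc_i$ with $\nu_t(\eta'_{i,n}(IB))=anc_i$, and since $nc_i\ge m$ it lies in $\Cont^{\ge m}(\mathfrak m_{\xi'})$. Its image $\eta_{i,n}:=\mu_\infty(\eta'_{i,n})\in\L(X,\xi)$ is again a fat arc in $\Cont^{\ge m}(\mathfrak m_\xi)$, hence belongs to some fat irreducible component; let $\Psi$ be its generic point. By Lemma~\ref{Lema_Etale_Arco} there is $\Psi'\in\L(X',\xi')$ with $\eta'_{i,n}\in\overline{\{\Psi'\}}$ and $\mu_\infty(\Psi')=\Psi$, and then $\bar r_{X,\Psi}=\bar r_{X',\Psi'}$ by Remark~\ref{rho_etale}. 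Using formula (\ref{r_escrito_n}) with $\mathcal G_{X'}=B[(IB)W^b]$, the fact that $\nu_t(\Psi'(\mathfrak m_{\xi'}))=\nu_t(\Psi(\mathfrak m_\xi))\ge m$ because $\mu$ is étale and $\Psi\in\Cont^{\ge m}(\mathfrak m_\xi)$, and upper semicontinuity of $\nu_t(\,\cdot\,(IB))$ along the specialization $\eta'_{i,n}\in\overline{\{\Psi'\}}$, one obtains
\begin{equation*}
\bar r_{X,\Psi}=\bar r_{X',\Psi'}=\frac{\nu_t(\Psi'(IB))/b}{\nu_t(\Psi'(\mathfrak m_{\xi'}))}\le\frac{\nu_t(\eta'_{i,n}(IB))/b}{m}=\frac{anc_i}{bm}\le\frac{a}{b}\Bigl(1+\frac{c_i}{m}\Bigr),
\end{equation*}
the last inequality because $nc_i\le m+c_i$. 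Hence $\delta_m\le\bar r_{X,\Psi}\le\ord^{(d)}_X(\xi)\bigl(1+\tfrac{c_i}{m}\bigr)$.

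Combining the two estimates gives $\ord^{(d)}_X(\xi)\le\delta_m\le\ord^{(d)}_X(\xi)\bigl(1+\tfrac{c_i}{m}\bigr)$ for all $m$, and letting $m\to\infty$ yields $\lim_{m\to\infty}\delta_m=\ord^{(d)}_X(\xi)$. The main obstacle, and the reason Lemma~\ref{r_semicont} cannot simply be quoted, is that for general $m$ the order $\nu_t(\Psi)$ of the generic point of the component containing $\eta_{i,n}$ is only known to lie in $[m,nc_i]$ and need not equal $\nu_t(\eta_{i,n})$; this is circumvented by bounding the numerator of $\bar r_{X',\Psi'}$ via semicontinuity of $\nu_t(\,\cdot\,(IB))$ and the denominator via membership in $\Cont^{\ge m}(\mathfrak m_{\xi'})$, rather than comparing the $\bar r$-values of two arcs of equal order.
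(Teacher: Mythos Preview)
Your proof is correct and follows essentially the same approach as the paper's: both construct the arc $\eta'_{i,\lceil m/c_i\rceil}$, pass to the generic point $\Psi$ of the fat component it lies in, lift $\Psi$ to $\Psi'$ via Lemma~\ref{Lema_Etale_Arco}, and then bound $\bar r_{X',\Psi'}$ by estimating the numerator $\nu_t(\Psi'(IB))$ via semicontinuity and the denominator $\nu_t(\Psi'(\mathfrak m_{\xi'}))$ via membership in $\Cont^{\ge m}$, obtaining exactly the squeeze $\tfrac{a}{b}\le\delta_m\le\tfrac{a}{b}\cdot\tfrac{c_i\lceil m/c_i\rceil}{m}$. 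The only cosmetic difference is that the paper fixes $c_1=\min\{c_1,\dots,c_\ell\}$ whereas you allow an arbitrary $c_i$, which is immaterial for the limit.
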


\begin{proof}  The statement is local, so we can assume that $X$ is an affine algebraic variety over $k$.  Choose a suitable  \'etale neighborhood of $X$, $\mu: X'\to X$ with $\mu(\xi')=\xi$ so that  setting  $X'=\Spec(B)$ we are in the same situation as the one considered in \ref{setting}. Thus we will be considering the finite morphism $\beta: X'\to \Vd$ with $\Vd=\Spec(S)$ a smooth $k'$-algebra as in \ref{setting}.   Let $\overline{\pi}: \overline{X'_1}\to X'$ be the normalized blow up of $X'$ at $\xi'$, which induces a commutative diagram of blow ups at finite morphisms  as in (\ref{diagrama_normal}). We use  the same notation as in Remark \ref{nuevo_c}.
Recall that we use  $c$ for  the minimum of the set $\{c_1,\ldots, c_{\ell}\}$ and assume $c=c_1$. 

As in (\ref{salvo_entera}) assume that the Rees algebra $\mathcal{G}^{(d)}$ has the same integral closure as $\mathcal{O}_{V^{(d)}}[IW^b]$,
and assume $\ord_{\beta(\xi')}\mathcal{G}^{(d)}=a/b$.

Let $\eta'_1\in\mathcal{L}(X', {\xi'})$ be the arc defined in (\ref{ejemplo_divisorial}) for $i=1$,  and set: 
\begin{equation*}
\omega_m= \left\lceil\frac{m}{c_1}\right\rceil \qquad \text{and} \qquad
\varphi'_m:=\eta'_{1,\omega_m}, 
\end{equation*}
where $\eta'_{1,\omega_m}$ is as in (\ref{arco_potencia}). 
Let  $\eta_1\in\mathcal{L}(X,\xi)$ be the arc obtained by composing with $X'\to X$, i.e., $\mu_{\infty}(\eta'_1)=\eta_1$, and similarly, define $\varphi_m:=\mu_{\infty}(\varphi'_m)$. 

Note that
\begin{equation*}
\bar{r}_{X,\varphi_m}= \bar{r}_{X',\varphi'_m}  = \frac{\ord_t(\varphi'_m(\G_{X'}))}{\nu_t(\varphi'_m)}=   \frac{\nu_t(\varphi'_m(IB))}{b\cdot \nu_t(\varphi'_m)}=
\frac{c_1\cdot a\cdot \omega_{m}}{b\cdot c_1\cdot \omega_{m}}=\frac{a}{b}=
\ord_{\xi}\mathcal{G}^{(d)}.
\end{equation*}
By construction $\varphi_m\in\Cont^{\geq m}(\mathfrak{m})$.
Let $\lambda_m\in \Lambda_m$ be an index such that $\varphi_m\in T_{m, \lambda_m}$ with generic point $\Psi_{m, \lambda_m}$. Using Lemma \ref{Lema_Etale_Arco}, let $\Psi'_{m,\lambda_m}$ be an arc in $\L(X',{\xi})$ such that $\mu_{\infty}(\Psi'_{m,\lambda_m})=\Psi_{m,\lambda_m}$ and so that 
$\varphi'_m\in \overline{\{\Psi'_{m,\lambda_m}\}}$.   

Note that for every $m$ we have
\begin{equation*}
m+c_1>\omega_{m}c_1=\nu_t(\varphi'_m)\geq\nu_t(\Psi'_{m, \lambda_m})\geq m
\end{equation*}
and
\begin{equation*}
\omega_{m}\cdot c_1\cdot a=\nu_t(\varphi'_m(IB))\geq \nu_t(\Psi'_{m, \lambda_m} (IB)).
\end{equation*}
Finally
\begin{equation*}
\frac{\omega_{m}\cdot c_1\cdot a}{m\cdot b}\geq \frac{\nu_t(\Psi'_{m, \lambda_m} (IB))}{m\cdot b} \geq
\frac{\nu_t(\Psi'_{m, \lambda_m} (IB))}{b\cdot\nu_t(\Psi'_{m, \lambda_m})}= \overline{r}_{X,\Psi'_{m, \lambda_m}} = \overline{r}_{X,\Psi_{m, \lambda_m}} \geq
\frac{a}{b}.
\end{equation*}
Now the result follows  by observing that: 
\begin{equation}
\frac{a}{b} \leq \lim_{n\to \infty} \delta_m\leq  \lim_{m\to \infty} \overline{r}_{X,\Psi_{m, \lambda_m}} \leq \lim_{m\to \infty} \left(\frac{\omega_{m}\cdot c_1\cdot a}{m\cdot b} \right) =\lim_{m\to\infty}\left(\frac{c_1\cdot a}{m\cdot b}\left\lceil\frac{m}{c_1}\right\rceil\right)=\frac{a}{b},
\end{equation} 
where the last equality follows by noticing that:   
\begin{equation*}
m+c_1\geq c_1\left\lceil\frac{m}{c_1}\right\rceil\geq m,
\end{equation*}
and that
\begin{equation*}
1+\frac{c_1}{m}=\frac{m+c_1}{m}\geq
\frac{c_1}{m}\left\lceil\frac{m}{c_1}\right\rceil\geq
\frac{c_1}{m}\frac{m}{c_1}=1.
\end{equation*}
\end{proof}

\section{On the values of the ${\mathbb Q}$-persistance}\label{valores}

The results in the previous sections are valid for varieties defined over a perfect field of arbitrary characteristic.
In this section we restrict to fields of characteristic zero since we use the existence of resolution of singularities.

\begin{Thm} \label{ThExistDivFat} 
	Let $X$ be a $d$-dimensional algebraic variety defined over a field of characteristic zero.
	Fix a point $\xi\in \Mm$ and let
	$\varphi \in \mathcal{L}(X, {\xi})$ be an arc such that $\varphi\not\in\mathcal{L}(\Sing(X))$.
	Then there exists a divisorial fat arc $\psi\in\mathcal{L}(X, {\xi})$ such that 
	\begin{itemize}
		\item $\varphi\in\overline{\{\psi\}}$   and
		\item $\bar{r}_{X,\varphi}=\bar{r}_{X,\psi}$.
	\end{itemize}
\end{Thm}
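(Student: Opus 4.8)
The plan is to reduce to the étale local model of \ref{setting} and then produce the divisorial arc by a finite sequence of point blow-ups directed by $\varphi$, just as in the construction of the Nash multiplicity sequence. First I would invoke Remark \ref{rho_etale} and Lemma \ref{Lema_Etale_Arco}: it suffices to find a divisorial fat $\psi'\in\mathcal{L}(X',\xi')$ with $\varphi'\in\overline{\{\psi'\}}$ and $\bar r_{X',\varphi'}=\bar r_{X',\psi'}$, since then $\psi=\mu_\infty(\psi')$ works (a dominant morphism between varieties of the same dimension preserves the divisorial property, by \cite[Proposition 2.10, Lemma 3.2]{I_Crelle} together with \cite[\S 6, Corollary 1, \S 14, Theorem 31]{Z-SII}, exactly as in the last lines of the proof of Theorem \ref{arco_divisorial}). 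So I may assume $X=\Spec(B)$, with the finite projection $\beta\colon X\to V^{(d)}=\Spec(S)$ and $\mathcal{G}^{(d)}=S[IW^b]$ up to integral closure, and $\mathcal{G}_{X}=B[(IB)W^b]$, as recalled in \ref{setting}.

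Next I would pass to the induced arc $\widetilde\varphi=\beta_\infty(\varphi)$ on $V^{(d)}$, which is not in $\mathcal{L}(\Sing\mathcal{G}^{(d)})$ because $\varphi\notin\mathcal{L}(\Sing(X))$ and $\beta$ is finite. By formula (\ref{r_formula}),
\begin{equation*}
\bar r_{X,\varphi}=\frac{\ord_t(\widetilde\varphi(\mathcal{G}^{(d)}))}{\nu_t(\widetilde\varphi(\mathfrak{m}_{\beta(\xi)}))}=\frac{\nu_t(\widetilde\varphi(I))}{b\,\nu_t(\widetilde\varphi(\mathfrak{m}_{\beta(\xi)}))},
\end{equation*}
so the quantity I must control is $\nu_t(\widetilde\varphi(I))$ and $\nu_t(\widetilde\varphi(\mathfrak{m}_{\beta(\xi)}))$. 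Since $V^{(d)}$ is smooth over a field of characteristic zero, resolution of singularities applies: take a simultaneous log-resolution $\Pi\colon Y\to V^{(d)}$ of $I$ and $\mathfrak{m}_{\beta(\xi)}$, write $I\mathcal{O}_Y=\prod I(H_j)^{a_j}$ and $\mathfrak{m}_{\beta(\xi)}\mathcal{O}_Y=\prod I(H_j)^{c_j}$. The arc $\widetilde\varphi$ lifts (uniquely, being non-thin with center at $\beta(\xi)$) to an arc $\widetilde\varphi_Y$ on $Y$ whose center lies on some stratum of the exceptional divisor; its generic point singles out a component $H_{j_0}$, i.e. $\nu_t(\widetilde\varphi_Y(I(H_{j_0})))=\ell\ge 1$ while $\widetilde\varphi_Y$ meets no other $H_j$ at all if I first restrict to the open stratum — the standard fact from \cite{E_L_M} (Theorem \ref{ThUnionCont}) is that a non-thin arc lies generically in a unique multi-contact locus. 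Then $\nu_t(\widetilde\varphi(I))=\ell\,a_{j_0}$ and $\nu_t(\widetilde\varphi(\mathfrak{m}_{\beta(\xi)}))=\ell\,c_{j_0}$, giving $\bar r_{X,\varphi}=\frac{1}{b}\cdot\frac{a_{j_0}}{c_{j_0}}$.

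Now I define $\psi$: let $\widetilde\psi$ be (a power of) the divisorial arc attached to the divisor $H_{j_0}$ over $V^{(d)}$, normalized so that its order at $\mathfrak{m}_{\beta(\xi)}$ matches — concretely, take the arc $\Spec(K_{j_0}[[t]])\to Y\to V^{(d)}$ obtained from the completion of $\mathcal{O}_{Y,h_{j_0}}$ at the generic point $h_{j_0}$ of $H_{j_0}$, pre-composed with $t\mapsto t^{n}$ for a suitable $n$ as in (\ref{ejemplo_divisorial})–(\ref{arco_potencia}), so that it lies in $\Cont^{\ge \ell c_{j_0}}$ and $\Cont^{=\ell c_{j_0}}(\mathfrak{m}_{\beta(\xi)})$. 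By construction $\widetilde\varphi\in\overline{\{\widetilde\psi\}}$ (both specialize from the generic multi-contact locus of $H_{j_0}$, and $\widetilde\varphi$ lies in the closure of the divisorial set $C_{V^{(d)}}(v_{H_{j_0}})$), and $\bar r$ computed from $\widetilde\psi$ is again $\frac{1}{b}\cdot\frac{a_{j_0}}{c_{j_0}}$ by the same two valuations. Because $\beta^*(\mathcal{G}^{(d)})\subset\mathcal{G}_X$ is a finite extension and $\mathfrak{m}_{\beta(\xi)}B$ is a reduction of $\mathfrak{m}_{\xi}$ (the three consequences of Zariski's formula in \ref{setting}), I can lift $\widetilde\psi$ to an arc $\psi'$ on $X'$ with center $\xi'$, divisorial and fat, with $\varphi'\in\overline{\{\psi'\}}$ and $\bar r_{X',\varphi'}=\bar r_{X',\psi'}$; the lifting and the closure relation are furnished by the finiteness of $\beta$ together with Lemma \ref{Lema_Etale_Arco}-type arguments (flatness is replaced here by finiteness/going-down for the normal $V^{(d)}$). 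Descending along $\mu$ as in the first paragraph finishes the proof.

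The main obstacle I anticipate is the bookkeeping in the lifting step from $V^{(d)}$ back to $X'$: ensuring simultaneously that (i) the lifted arc $\psi'$ really has center exactly $\xi'$ (not just a point of $\Mm$ mapping to $\beta(\xi')$ — here the first Zariski consequence, that $\xi'$ is the unique point over $\beta(\xi')$, is essential), (ii) $\varphi'$ lies in the Zariski closure of $\{\psi'\}$ upstairs and not merely downstairs, and (iii) the equality of $\bar r$ is genuinely preserved, which relies on reading $\bar r_{X'}$ through $\beta^*(\mathcal{G}^{(d)})$ via (\ref{r_formula}) rather than through $\mathcal{G}_{X'}$ directly. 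A secondary technical point is the precise choice of the ramification exponent $n$ in the definition of $\widetilde\psi$ so that $\psi'$ is genuinely an arc (integral coefficients) and genuinely fat; this is routine but must be spelled out, paralleling Remark \ref{nuevo_c}.
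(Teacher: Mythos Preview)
Your overall architecture (étale reduction, log-resolution, multi-contact locus, descent) is close to the paper's, but there is a genuine gap in the core step. You assert that the lifted arc $\widetilde\varphi_Y$ ``singles out a single component $H_{j_0}$'', i.e.\ that exactly one $\ell_j$ is nonzero. This is false in general: the \emph{center} of $\widetilde\varphi_Y$ can lie on an intersection of several exceptional components, so several $\ell_j$ are positive simultaneously. The correct formula is
\[
\bar r_{X,\varphi}=\frac{\sum_{j}\ell_j a_j}{b\sum_{j}\ell_j c_j},
\]
which need not equal $\dfrac{a_{j_0}}{b\,c_{j_0}}$ for any single index. Consequently your $\widetilde\psi$, built from a single $H_{j_0}$, will typically have a different $\bar r$ from $\varphi$, and the equality $\bar r_{X,\varphi}=\bar r_{X,\psi}$ fails. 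The remedy, which is exactly what the paper does, is to take the \emph{full} multi-index $\ell=(\ell_1,\ldots,\ell_N)$ and let $\psi'_{Y'}$ be the generic point of the irreducible multi-contact locus $\Cont^{\ell}(E)$; then $\varphi'_{Y'}\in\Cont^{\ell}(E)\subset\overline{\{\psi'_{Y'}\}}$ tautologically, and both arcs share the same contact vector and hence the same $\bar r$. Divisoriality of this generic point is \cite[2.6]{E_L_M}.

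A second, independent point: the paper takes the simultaneous log-resolution $\Pi:Y'\to X'$ directly over $X'$ (using resolution in characteristic zero for the possibly singular $X'$), rather than over $V^{(d)}$. This completely bypasses the lifting step from $V^{(d)}$ back to $X'$ that you flag as the ``main obstacle''. Your route through $V^{(d)}$ is not a priori wrong, but the lifting of the closure relation along the finite (non-flat) map $\beta$ is delicate and you do not actually carry it out; working on $X'$ from the start makes this unnecessary.
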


\begin{proof} 
Assume $X$ is affine. Recall that 
there is an \'etale morphims $\mu:X'\to X$ and a point $\xi'\in X'$ with $\mu(\xi')=\xi$, such that
the situation in \ref{setting} holds for $X'$.
This means that $X'=\Spec(B)$, there exists a finite morphism $\beta:X'\to V^{(d)}$ with $V^{(d)}=\Spec(S)$ smooth as in \ref{setting}. Moreover, as in (\ref{salvo_entera}), we have that, up to integral closure, $\mathcal{G}^{(d)}=S[IW^b]$.
There exists an arc $\varphi'\in\mathcal{L}(X', {\xi'})$ such that $\mu_{\infty}(\varphi')=\varphi$.

Let $\Pi:Y'\to X'$ be a simultaneous log-resolution of the ideals $I\mathcal{O}_{X'}$ and $\mathfrak{m}_{X',\xi'}$: 
\begin{equation} 
	I\mathcal{O}_{Y'}=I(H_1)^{a_1}\cdots I(H_N)^{a_N},
	\qquad
	\mathfrak{m}_{X',\xi'}\mathcal{O}_{Y'}=I(H_1)^{c_1}\cdots I(H_N)^{c_N}. 
\end{equation}
Note that  $V(I)=\Max\mult_{X'}\subset\Sing(X')$, and therefore,
since $\varphi'\not\in\L(\Sing(X'))$,    it factors through $Y'$ and  there is a unique $\varphi'_{Y'}\in\mathcal{L}(Y')$ such that   $\varphi'=\Pi_{\infty}(\varphi'_{Y'})$.  Set  $\ell_i=\nu_t(\varphi'_{Y'}(I(H_i)))$, $i=1,\ldots,N$, then
we have that
\begin{equation} \label{eq_r_barra_0}
	\bar{r}_{X,\varphi}=\bar{r}_{X',\varphi'}=  
	\frac{\nu_t(\varphi'(I\mathcal{O}_{X'}))}{b\nu_t(\varphi'(\mathfrak{m}_{X,\xi}\mathcal{O}_{X'}))}=
	\frac{\nu_t(\varphi'_{Y'}(I\mathcal{O}_{Y'}))}{b\nu_t(\varphi'_{Y'}(\mathfrak{m}_{X,\xi}\mathcal{O}_{Y'}))}=  
	\frac{\sum_{i=1}^{N}\ell_i a_i}{b\sum_{i=1}^{N}\ell_i c_i}.
\end{equation}

Consider the multi-index $\ell=(\ell_1,\ldots,\ell_N)$ and  the multi-contact loci
(\ref{Multi_Contact}) (see also \cite{E_L_M}) 
\begin{equation*}
	\Cont^{\ell}(E'_{Y'})=\left\{\phi\in\mathcal{L}(Y')\mid \nu_t(\phi(I(H_i)))=\ell_i,\ i=1,\ldots,N\right\}
\end{equation*}
	where $E'_{Y'}$ is the simple normal crossing divisor on $Y'$ with irreducible components $H_1,\ldots,H_N$.
	The set $\Cont^{\ell}(E'_{Y'})$ is irreducible and not empty since   $\varphi'_{Y'}\in\Cont^{\ell}(E'_{Y'})$.
	By \cite[2.6]{E_L_M} this set is divisorial.  Let $\psi'_{Y'}$ be the generic point of $\Cont^{\ell}(E'_{Y'})$.
	The arc $\psi'=\Pi_{\infty}(\psi'_{Y'})$ is a fat divisorial arc on $X'$, we have that  $\varphi'\in\overline{\{\psi'\}}$ and
	$\bar{r}_{X',\psi'}=\bar{r}_{X',\varphi'}$.
	Now set $\psi=\mu_{\infty}(\psi')$, we also have that $\varphi\in\overline{\{\psi\}}$ and by \cite[3.2]{I_Crelle} the arc $\psi$ is divisorial in $X$.

\end{proof}

\begin{Thm} \label{Values}   Let $X$ be a $d$-dimensional algebraic variety defined over a field of characteristic zero $k$ and let $\xi\in \Mm$. Let $\mu: X'\to X$ be an \'etale morphism   with $\mu(\xi')=\xi$ where $\G_{X'}$ is defined (see \ref{setting}), and 
assume that,  up to integral closure, $\mathcal{G}_{X'}={\mathcal O}_{X'}[IW^b]$ (see (\ref{salvo_entera})).
Let $\Pi:Y\to X'$ be a simultaneous log-resolution of the ideals $I$ and $\mathfrak{m}_{\xi'}$.
Denote by  $H_1,\ldots,H_N$ the irreducible components of the exceptional locus,
\begin{equation} \label{EqSimResIdMax}
\begin{aligned}
  I\mathcal{O}_{Y}=I(H_1)^{a_1}\cdots I(H_N)^{a_N}, \\
  \mathfrak{m}_{\xi'}\mathcal{O}_{Y}=I(H_1)^{c_1}\cdots I(H_N)^{c_N}.
\end{aligned} 
\end{equation} 
Set $\Lambda=\left\{i\in \left\{1,\ldots,N\right\}\mid a_i\neq 0\right\}$.
	Then, for any arc $\varphi$ in $\L(X, {\xi})$,   with $\varphi\not\in\L(\Sing(X))$,  
	\begin{equation*}
	\frac{1}{b}\min_{i\in\Lambda}\frac{a_i}{c_i}\leq \bar{r}_{X,\varphi}\leq \frac{1}{b}\max_{i\in\Lambda}\frac{a_i}{c_i}, 
	\end{equation*}
where we use the convention that $\frac{a_i}{c_i}=\infty$ whenever $c_i=0$ and $a_i\neq 0$.

Moreover
\begin{equation*}
	\frac{1}{b}\min_{i\in\Lambda}\frac{a_i}{c_i}=\inf\left\{ \bar{r}_{X,\varphi} \mid \varphi\in\mathcal{L}(X, {\xi}) \right\}
	\quad\text{and}\quad
	\frac{1}{b}\max_{i\in\Lambda}\frac{a_i}{c_i}=\sup\left\{ \bar{r}_{X,\varphi} \mid \varphi\in\mathcal{L}(X, {\xi}) \right\}.
	\end{equation*}
\end{Thm}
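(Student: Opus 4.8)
The plan is to reduce everything to the \'etale chart $X'=\Spec(B)$ of the statement and to work on the log-resolution $\Pi\colon Y\to X'$. By Remarks~\ref{rho_etale} and \ref{rem:lifting_arc_etale_open} the function $\bar{r}$, as well as its infimum and supremum over arcs centred at the point, are unchanged under $\mu$, so it suffices to argue on $X'$; there $\mathcal{G}_{X'}=\mathcal{O}_{X'}[IW^b]$ up to integral closure, $I$ defines $\Mmp\subseteq\Sing(X')$, $I\mathcal{O}_Y=\prod_i I(H_i)^{a_i}$ and $\mathfrak{m}_{\xi'}\mathcal{O}_Y=\prod_i I(H_i)^{c_i}$. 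Shrinking $X'$ to a neighbourhood of $\xi'$ (which does not alter the orders $a_i,c_i$, computed at the generic points of the $H_i$) I discard the finitely many $H_i$ with $\xi'\notin\Pi(H_i)$; these have $c_i=0$, so this neither changes $\min_{i\in\Lambda}a_i/c_i$ nor wrongly inflates $\max_{i\in\Lambda}a_i/c_i$, and afterwards $\xi'\in\Pi(H_i)$ for all $i$. The key combinatorial observation is that $c_i>0\Rightarrow i\in\Lambda$: indeed $c_i>0$ forces $H_i\subseteq V(\mathfrak{m}_{\xi'}\mathcal{O}_Y)=\Pi^{-1}(\xi')\subseteq\Pi^{-1}(V(I))=V(I\mathcal{O}_Y)=\bigcup_{j\in\Lambda}H_j$, and since the $H_i$ are distinct irreducible components this gives $i\in\Lambda$. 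Moreover $\Pi^{-1}(\xi')\neq\emptyset$, so some $c_i>0$ and $\min_{i\in\Lambda}a_i/c_i<\infty$.

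For the inequalities, take $\varphi\in\L(X,\xi)$ with $\varphi\notin\L(\Sing X)$, lift it to $\varphi'\in\L(X',\xi')$ and then, exactly as in the proof of Theorem~\ref{ThExistDivFat} (using $V(I)\subseteq\Sing(X')$), to the unique $\varphi'_Y\in\L(Y)$ with $\Pi_\infty(\varphi'_Y)=\varphi'$. Setting $\ell_i=\nu_t(\varphi'_Y(I(H_i)))\in\mathbb{N}$, the same computation as there gives $b\,\bar{r}_{X,\varphi}=\bigl(\sum_i\ell_i a_i\bigr)/\bigl(\sum_i\ell_i c_i\bigr)$, the denominator being $\nu_t(\varphi'(\mathfrak{m}_{\xi'}))\geq 1$. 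By the combinatorial observation all nonzero terms in both sums sit in $\Lambda$, and splitting the numerator according to whether $c_i>0$ or $c_i=0$ presents $b\,\bar{r}_{X,\varphi}$ as a convex combination of the numbers $a_i/c_i$ ($i\in\Lambda$, $c_i>0$), with weights $\ell_i c_i$, plus the non-negative correction $\bigl(\sum_{i\in\Lambda,\,c_i=0}\ell_i a_i\bigr)/\bigl(\sum_i\ell_i c_i\bigr)$. Hence $b\,\bar{r}_{X,\varphi}\geq\min_{i\in\Lambda}a_i/c_i$; and if the correction is nonzero then some $i\in\Lambda$ has $c_i=0$, so $\max_{i\in\Lambda}a_i/c_i=\infty$ and the upper bound is vacuous, while otherwise $b\,\bar{r}_{X,\varphi}$ is just the convex combination, hence $\leq\max_{i\in\Lambda}a_i/c_i$. (The lower bound also follows from $\bar{r}_{X,\varphi}\geq\ord^{(d)}_X(\xi)$ of Theorem~\ref{principal} once one identifies $\ord^{(d)}_X(\xi)=\tfrac{1}{b}\nu_{\xi'}(I)=\tfrac{1}{b}\min_{i\in\Lambda}a_i/c_i$ via the log-resolution.)

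For the ``moreover'' I would realise the two bounds by explicit arcs. Let $i_0\in\Lambda$ attain $\min_{i\in\Lambda}a_i/c_i$; this value is finite, so $c_{i_0}>0$ and $\Pi(H_{i_0})=\{\xi'\}$. Take the divisorial arc of $Y$ centred at the generic point $h_{i_0}$ of $H_{i_0}$, as in (\ref{ejemplo_divisorial}); since $h_{i_0}\notin H_j$ for $j\neq i_0$ its contact orders are $\ell_j=\delta_{j,i_0}$, so pushing it to $X'$ via $\Pi$ and then to $X$ via $\mu$ produces an arc of $\L(X,\xi)$ whose generic point is the generic point of $X$, with $\bar{r}=\tfrac{1}{b}\,a_{i_0}/c_{i_0}$; hence the infimum is attained and equals $\tfrac{1}{b}\min_{i\in\Lambda}a_i/c_i$. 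For the supremum, if $\max_{i\in\Lambda}a_i/c_i<\infty$ it is attained at some $i_1\in\Lambda$ with $c_{i_1}>0$ and the same construction for $H_{i_1}$ realises it. If $\max_{i\in\Lambda}a_i/c_i=\infty$ there is $i_1\in\Lambda$ with $c_{i_1}=0$; since $\xi'\in\Pi(H_{i_1})$ I choose $y_1\in H_{i_1}\cap\Pi^{-1}(\xi')$ and normal-crossings coordinates $z_1,\dots,z_d$ of $Y$ at $y_1$ with $I(H_{i_1})=(z_1)$ and $I(H_j)=(z_l)$ for the other components through $y_1$, and consider the arcs $\gamma_N$ of $Y$ centred at $y_1$ with $\gamma_N^{*}(z_1)=t^N$ and $\gamma_N^{*}(z_l)=t$ for $l\geq 2$. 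Then $\nu_t(\gamma_N(I\mathcal{O}_Y))=N a_{i_1}+A$ whereas $\nu_t(\gamma_N(\mathfrak{m}_{\xi'}\mathcal{O}_Y))=C$ is independent of $N$ (the exponent $c_{i_1}=0$ kills the $t^N$), with $A\geq 0$, $C\geq 1$, $a_{i_1}\geq 1$; pushing down to $X$ gives arcs of $\L(X,\xi)$ with $\bar{r}=(N a_{i_1}+A)/(bC)\to\infty$, so $\sup=\infty$. Arcs lying in $\L(\Sing X)$ add nothing: those whose generic point lies in $\Mm$ have $\bar{r}=\infty$, and such arcs exist precisely when $\Mm$ has a positive-dimensional component through $\xi$, whose strict transform in $Y$ lies inside one $H_{i_1}$ with $c_{i_1}=0$ and $i_1\in\Lambda$, so that again $\max_{i\in\Lambda}a_i/c_i=\infty$.

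The delicate point is the sharpness of the upper bound in the unbounded case, i.e. producing arcs centred at $\xi$ of arbitrarily large $\mathbb{Q}$-persistance. This needs, first, that a component $H_{i_1}$ with $a_{i_1}\neq 0$ and $c_{i_1}=0$ actually meet the fibre $\Pi^{-1}(\xi')$ --- which is exactly why $X'$ is localised around $\xi'$ at the outset --- and then the explicit construction on $Y$ of arcs with prescribed orders along the normal-crossings components, elementary once suitable coordinates are fixed but to be arranged so that the resulting arcs on $X$ are centred at $\xi$ and are not contained in $\L(\Mm)$ (so that formula (\ref{r_escrito_n}) still applies). A subsidiary, routine point is the identification $\ord^{(d)}_X(\xi)=\tfrac{1}{b}\min_{i\in\Lambda}a_i/c_i$ through the log-resolution and its compatibility with the finite projection $\beta$ of \ref{setting}.
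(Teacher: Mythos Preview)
Your proof is correct and follows essentially the same strategy as the paper: lift an arc through the log-resolution to obtain the formula $\bar{r}_{X,\varphi}=\tfrac{1}{b}\bigl(\sum_i\ell_i a_i\bigr)\big/\bigl(\sum_i\ell_i c_i\bigr)$ (your convex-combination reading of it is just a restatement of the paper's one-line deduction of the sandwich inequality from (\ref{eq_r_barra_0})), and then realise the extremal values by arcs on $Y$ with prescribed contact orders along the $H_i$.

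The only substantive difference is in how those extremal arcs are produced. The paper uses the multi-contact loci $\Cont^{\ell}(E_Y)$ of Ein--Lazarsfeld--Musta\c{t}\u{a} directly: for the unbounded supremum it takes $\ell_n=(n,1,0,\dots,0)$ with $c_1=0$, $a_1\neq 0$, $c_2\neq 0$, lets $\psi_n$ be the generic point of $\Cont^{\ell_n}(E_Y)$, and computes $\bar{r}_{X,\psi_n}=(na_1+a_2)/(bc_2)\to\infty$; the finite infimum and supremum are left implicit. You instead give more hands-on constructions---divisorial arcs at the generic points of the $H_i$ for the finite bounds, and explicit local-coordinate arcs $\gamma_N$ at a chosen $y_1\in H_{i_1}\cap\Pi^{-1}(\xi')$ for the unbounded case---and you also supply the auxiliary observation $c_i>0\Rightarrow i\in\Lambda$ and the discussion of arcs in $\L(\Sing X)$, which the paper omits. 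Your route is more self-contained and is more careful about the point you flag as delicate (that the relevant $H_{i_1}$ actually meet $\Pi^{-1}(\xi')$; the paper's choice $\ell_n=(n,1,0,\dots,0)$ tacitly assumes $H_1\cap H_2\neq\emptyset$), whereas the paper's route is terser and reuses the ELM machinery already set up for Theorem~\ref{ThExistDivFat}.
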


\begin{proof}
The first inequalities are a consequence of (\ref{eq_r_barra_0}), 
\begin{equation*}
	\frac{1}{b}\min_{i\in\Lambda}\frac{a_i}{c_i}\leq \bar{r}_{X,\varphi}=
	\frac{1}{b}\frac{\sum_{i=1}^{N}\ell_i a_i}{\sum_{i=1}^{N}\ell_i c_i}
	\leq\frac{1}{b} \max_{i\in\Lambda}\frac{a_i}{c_i}.
\end{equation*}
We only need to study the case when some $c_i=0$.
In this case the maximum value has to be $\infty$ and we claim that there are arcs $\varphi$ such that $\bar{r}_{X,\varphi}$ is bigger than any positive real number.
	
Assume that $c_1=0$ and $a_1\neq 0$. There exist some $c_j\neq 0$, after reordering the indexes assume that $c_2\neq 0$.
	
Set $\ell_n=(n,1,0,\ldots,0)$ and let $\psi_n$ the generic point of $\Cont^{\ell_n}(E_Y)$.
Note that
\begin{equation*}
	\lim_{n\to\infty}\bar{r}_{X,\psi_n}=\lim_{n\to\infty}\frac{n a_1+a_2}{bc_2}=\infty
\end{equation*}
\end{proof}

\end{document}